\DeclareMathAlphabet{\mathpzc}{OT1}{pzc}{m}{it}
\newcommand{\EO}[1]{\textcolor{black}{#1}}
\newtheorem{theorem}{Theorem}[section]
\newtheorem{lemma}[theorem]{Lemma}
\newtheorem{proposition}{Proposition}[section]
\theoremstyle{definition}
\newtheorem{definition}[theorem]{Definition}
\theoremstyle{remark}
\newtheorem{remark}[theorem]{Remark}
\numberwithin{equation}{section}
\begin{document}

\title[A first--degree FEM for a control problem of fractional operators]{A \EO{piecewise linear} FEM for an optimal control problem of fractional operators: error analysis on \EO{curved domains}}

\author[E.~Ot\'arola]{Enrique Ot\'arola}
\address{Departamento de Matem\'atica, Universidad T\'ecnica Federico Santa Mar\'ia, Valpara\'iso, Chile.}
\email{enrique.otarola@usm.cl}
\thanks{EO has been supported in part by CONICYT through FONDECYT project 3160201 and Anillo ACT1106.}

\subjclass[2010]{35R11,    
35J70,                     
49J20,                     
49M25,                     
65N12,                     
65N30.}                    

\keywords{linear--quadratic optimal control problem, fractional derivatives, fractional diffusion, finite elements, stability, anisotropic estimates, curved domains}

\begin{abstract}
We propose and analyze a new discretization technique for a linear--quadratic optimal control problem involving the fractional powers of a symme\-tric and uniformly elliptic second oder operator; control constraints are considered. Since these fractional operators can be realized as the Dirichlet-to-Neumann map for a nonuniformly elliptic equation, we recast our problem as a nonuniformly elliptic optimal control problem. The rapid decay of the solution to this problem suggests a truncation that is suitable for numerical approximation. We propose a fully discrete scheme that is based on piecewise linear functions on quasi-uniform meshes to approximate the optimal control and first--degree tensor product functions on anisotropic meshes for the optimal state variable. \EO{We provide an a priori error analysis that relies on derived H\"older and Sobolev regularity estimates for the optimal variables and error estimates for an scheme that approximates fractional diffusion on curved domains; the latter being an extension of previous available results.} The analysis is valid in any dimension. We conclude by presenting some numerical experiments that validate the derived error estimates.
\end{abstract}

\maketitle

\section{Introduction}
\label{sec:introduccion}
In this work we shall be interested in the analysis of a new solution technique for a linear--quadratic optimal control problem involving the fractional powers of an uniformly elliptic second oder operator; control constraints are considered. Before describing such a discretization technique, we follow \cite{AO} and illustrate the PDE--constrained optimization problem we are interested in. Let $\Omega$  be an open and bounded domain in $\R^n$ ($n\ge1$) with Lipschitz boundary $\partial\Omega$. Given $s \in (0,1)$ and a desired state $\usf_d: \Omega \rightarrow \mathbb{R}$, we define the cost functional
\begin{equation}
\label{def:J}
 J(\usf,\zsf)= \frac{1}{2} \| \usf - \usf_{d} \|^2_{L^2(\Omega)} + \frac{\vartheta}{2} \| \zsf\|^2_{L^2(\Omega)}, 
\end{equation}
where $\vartheta > 0$ denotes the so--called regularization parameter. We shall be concerned with the following PDE--constrained optimization problem: Find
\begin{equation}
\label{eq:J}
 \text{min }J(\usf,\zsf),
\end{equation}
subject to the \emph{fractional state equation}
\begin{equation}
\label{eq:fractional}
\mathcal{L}^s \usf = \zsf  \text{ in } \Omega, \qquad \usf = 0   \text{ on } \partial \Omega, \\
\end{equation}
and the \emph{control constraints}
\begin{equation}
 \label{eq:cc}
\asf \leq \zsf(x') \leq \bsf \quad\textrm{a.e.}\quad x' \in \Omega . 
\end{equation}
The bounds $\asf , \bsf \in \mathbb{R}$ satisfy the property $\asf < \bsf$. The operator $\calLs$, with $s \in (0,1)$, is a fractional power of the second order, symmetric and uniformly elliptic operator 
\begin{equation}
\label{eq:L}
 \mathcal{L} w= - \DIV_{x'} (A(x') \nabla_{x'} w) 
\end{equation}
supplemented with homogeneous Dirichlet boundary conditions. The matrix of coefficients $A(x')$ is bounded and measurable in $\Omega$ and satisfies a uniform ellipticity condition. For convenience, we will refer to the optimal control problem \eqref{eq:J}--\eqref{eq:cc} as the \emph{fractional optimal control problem}.

Recently, the analysis of regularity properties of the solution to fractional PDE has received a tremendous attention: fractional diffusion has been one of the most studied topics in the past decade \cite{CS:07,MR2680400,Pablo,NOS3,MR2270163}. Concerning applications, capturing the essential behavior of fractional diffusion is fundamental in science and engineering since it allows for understanding of applications where anomalous diffusion is considered \cite{Abe2005403}, complex phenomena in mechanics \cite{atanackovic2014fractional}, turbulence \cite{wow}, nonlocal electrostatics \cite{ICH}, image processing \cite{Gatto} and finance \cite{MR2064019}. In many of these applications, control problems arise naturally \cite{AO,AOS}.

Regarding the numerical analysis literature, the approaches advocated for fractional diffusion are mainly divided in two categories. The first one is based on an approximated spectral representation that requires the solution of a large eigenvalue problem \cite{ILTA:05,MR2300467,YTLI:11}. The second approach is based on the singular integral definition of fractional diffusion: the obtained matrices are dense and the presence of a non-integrable kernel requires special attention \cite{AB:15,HO:14,Tobias}. As opposed to these approaches and inspired in the breakthrough by L. Caffarelli and L. Silvestre \cite{CS:07}, the authors of \cite{NOS} have designed and analyzed a new solution technique for fractional diffusion via an extension problem. Although the analysis of the proposed method is intricate, its implementation is done using standard components of finite element analysis \cite{NOS}.

In contrast, the analysis of optimal control problems involving fractional diffusion and nonlocal operators is still in its infancy; see \cite{AO, AOS,Marta,MR3472639} for some preliminary results. In \cite{Marta}, a PDE--constrained optimization problem involving a nonlocal diffusion equation is considered and analyzed. For such a problem, the authors propose a numerical scheme to approximate its solution and derive convergence results. In \cite{AO}, the authors have used the Caffarelli--Silvestre extension \cite{CS:07}, and its developments to both bounded domains \cite{BCdPS:12,CDDS:11} and a general class of elliptic operators \cite{Pablo,ST:10}, to design and analyze solution techniques for the \emph{fractional optimal control problem} \eqref{eq:J}--\eqref{eq:cc}. The analysis hinges on the fact that the fractional powers of $\mathcal{L}$  can be realized as an operator that maps a Dirichlet boundary condition to a Neumann condition via an extension problem on $\C = \Omega \times (0,\infty)$. This extension is the following local boundary value problem (see \cite{BCdPS:12,CS:07,CDDS:11,ST:10} for details):
\begin{equation}
\label{eq:alpha_harm_aux}
\mathcal{L}\ue - \tfrac{\alpha}{y}\partial_{y}\ue - \partial_{yy}\ue = 0 \text{ in } \C, 
\quad
\ue = 0 \text{ on } \partial_L \C, 
\quad
\partial_{\nu}^{\alpha} \ue= d_s \zsf \text{ on } \Omega \times \{0\},
\end{equation}
where $\partial_L \C= \partial \Omega \times [0,\infty)$ is the lateral boundary of $\C$, $\alpha = 1-2s \in (-1,1)$, $d_s = 2^{\alpha}\Gamma(1-s)/\Gamma(s)$ and 
the conormal exterior derivative of $\ue$ at $\Omega \times \{ 0 \}$ is
\begin{equation}
\label{def:lf}
\partial_{\nu}^{\alpha} \ue = -\lim_{y \rightarrow 0^+} y^\alpha \ue_y.
\end{equation}
The limit in \eqref{def:lf} is understood in the distributional sense \cite{CS:07,CDDS:11,ST:10}. We will call $y$ the \emph{extended variable} and call the dimension $n+1$ in $\R_+^{n+1}$ the \emph{extended dimension} of problem \eqref{eq:alpha_harm_aux}. As noted in \cite{BCdPS:12,CS:07,CDDS:11,ST:10}, the fractional powers of the operator $\mathcal{L}$ and the Dirichlet-to-Neumann operator of problem \eqref{eq:alpha_harm_aux} are related by 
\begin{equation*}
  d_s \calLs u = \partial_{\nu}^{\alpha} \ue \quad \text{in } \Omega.
\end{equation*}
Note that the differential operator in \eqref{eq:alpha_harm_aux} is
$
  -\DIV \left( y^{\alpha} \mathbf{A} \nabla \ue \right),
$
where $\mathbf{A}(x',y)$ $= \textrm{diag} \{A(x'),1\}$ for all $(x',y)$ in $\C $. Thus, we rewrite \eqref{eq:alpha_harm_aux} as 
\begin{equation}
\label{eq:alpha_harm}
  -\DIV \left( y^{\alpha} \mathbf{A} \nabla \ue \right) = 0  \textrm{ in } \C, \quad
  \ue = 0 \text{ on } \partial_L \C, \quad
  \partial_{\nu}^{\alpha} \ue  = d_s \zsf  \text{ on } \Omega \times \{0\}. 
\end{equation}

Invoking such a localization result, the authors of \cite{AO} overcome the nonlocality of $\calLs$ by considering an equivalent formulation for \eqref{eq:J}--\eqref{eq:cc}: $\text{min }J(\ue|_{y=0},\zsf)$ subject to the \emph{linear} equation \eqref{eq:alpha_harm} and \eqref{eq:cc}; the \emph{extended optimal control problem}. On the basis of this formulation, the following simple strategy to solve \eqref{eq:J}--\eqref{eq:cc} is proposed: given $s\in (0,1)$ and a desired state $\usf_d: \Omega \rightarrow \R$, solve the extended optimal control problem, thus obtaining $\bar{\zsf}(x')$ and $\bar{\ue}(x',y)$. Setting $\bar{\usf}: x' \in \Omega  \mapsto \bar{\usf}(x') = \bar{\ue}(x',0) \in \R$, the optimal pair $(\bar{\usf},\bar{\zsf})$ solving \eqref{eq:J}-\eqref{eq:cc} is obtained. Two numerical techniques are analyzed in \cite{AO}: one that is semidiscrete where the optimal control is not discretized, and the other one that is fully discrete and discretizes the optimal control using piecewise constant functions. A priori error estimates are derived; the ones for the fully--discrete scheme being quasi--optimal in terms of approximation. 

In this work, we continue with our recent research program by proposing and analyzing a new solution technique for the optimal control problem \eqref{eq:J}--\eqref{eq:cc}. This technique corresponds to a fully discrete scheme, which, in contrast to \cite{AO}, discretizes the optimal control with \emph{piecewise linear} functions on quasi-uniform meshes. Following \cite{NOS}, the optimal state is discretized with first--degree tensor product finite elements on anisotropic meshes. \EO{At this point, it is important to comment that although the essential localization results have been already introduced and analyzed in \cite{AO}, the error analysis of our proposed scheme comes with its own set of difficulties. Overcoming them has required us to provide several new, nontrivial, results. Let us briefly detail some of them:
\begin{enumerate}[1.]
 \item We provide regularity results, in H\"older and Sobolev spaces, for the continuous optimal control variables that require some suitable assumptions on the smoothness of the domain $\Omega$ and the matrix of coefficients $A$.
 \item In order to invoke the regularity results detailed in the previous point, we have seen the need of developing, inspired in \cite{NOS}, an a priori error analysis for fractional diffusion on curved domains $\Omega$ of class $C^2$. The analysis is able to deal with both the natural anisotropy of the mesh in the extended variable $y$ and the nonuniform coefficient $y^{\alpha}$ with $\alpha \in (-1,1)$.
 \item We provide an a priori error analysis for our scheme that requires a subtle interplay between Sobolev and H\"older regularity of the optimal control variables, as well as as growth conditions, and relies on the assumption that the boundary of the active sets consists of a finite number of rectifiable curves \cite{MV:08,MR:04}.
\end{enumerate}}

\EO{It is thus instructive to compare the error estimate derived in this work with the one obtained in \cite[Corollary 5.17]{AO}. The aforementioned error estimates read
\[
 \| \ozsf - \bar{Z} \|_{L^2(\Omega)} \lesssim |\log N|^{2s} N^{-\frac{1}{n+1}},\quad \textrm{and} \quad
 \| \ozsf - \bar{Z} \|_{L^2(\Omega)} \lesssim |\log N|^{2s} N^{-\frac{1}{n+1}\left( \frac{1}{2} + \theta \right)},
\]
respectively. Here $\ozsf$ denotes the optimal control, $\bar{Z}$ denotes the corresponding control approximation and $N$ denotes the number of degrees of freedom of the underlying mesh. For $s \in (1/2,1)$, the parameter $\sigma$ corresponds to $1$. Meanwhile, for $s \in (1/4,1/2)$, we have that $\sigma = 2s$. Consequently, for $s \in (1/4,1)$ our proposed scheme over-perform the one developed in \cite{AO}; see Section \ref{sec:numerics} for an illustration. For $s \in (0,1/4]$, we have that $\sigma = 1/2$ and, for these values of $s$, we derive a linear order of convergence for the proposed scheme.}

The outline of this paper is as follows. In Section \ref{sec:notation} we recall the definition of the fractional powers of elliptic operators via spectral theory and introduce some suitable functional framework. In Section \ref{sec:control} we define the fractional and extended optimal control problems. Section \ref{subsec:control} presents regularity results for the optimal control variables, in both H\"older and Sobolev spaces. Section \ref{sec:truncated} is advocated to the \emph{truncated optimal control problem}. We also state the approximation \EO{and regularity} properties of its solution. In Section \ref{sec:state_equation} we review the a priori error analysis of \cite{NOS} for the state equation \eqref{eq:alpha_harm} \EO{and in Section \ref{subsec:apriori_fractional} we extend these results to a scenario where $\Omega$ is a $C^2$ curved domain.} In Section \ref{sec:fd} we propose a fully--discrete scheme for the \emph{fractional optimal control problem} that discretizes the optimal control with piecewise linear functions. For $s \in (0,1)$, we provide an a priori error analysis for the proposed scheme. The analysis is valid in any dimension. Finally, in Section \ref{sec:numerics}, we present numerical experiments that illustrate the a priori error analysis of Section \ref{sec:fd}.

\section{Notation and preliminaries}
\label{sec:notation}

\subsection{Notation}
\label{subsec:notation}
Throughout this work $\Omega$ is an open, bounded and connected domain of $\R^n$ ($n\geq1$) with Lipschitz boundary $\partial\Omega$. We will follow the notation of \cite{AO,NOS3} and define the semi--infinite cylinder with base $\Omega$ and its lateral boundary, by $\C = \Omega\times (0,\infty)$ and $\partial_L \C = \partial \Omega \times [0,\infty)$, respectively. Given $\Y>0$, we define the truncated cylinder $ \C_\Y = \Omega \times (0,\Y)$ and its lateral boundary $\partial_L\C_\Y$ accordingly.

Since we will be dealing with objects defined in $\R^{n+1}$ and the extended $n+1$--dimension will play a special role, we denote a vector $x\in \R^{n+1}$ by
\[
  x =  (x_1,\ldots,x_n, x_{n+1}) = (x', x_{n+1}) = (x',y),
\]
with $x_i \in \R$ for $i=1,\ldots,{n+1}$, $x' \in \R^n$ and $y\in\R$.

The matrix of coefficients $A(x')$, that defines the operator $\mathcal{L}$ in \eqref{eq:L}, is measurable and bounded in $\Omega$, and uniformly elliptic. We denote by $\calLs$, with $s \in (0,1)$, the fractional powers of the operator $\mathcal{L}$. The parameter $\alpha$ belongs to $(-1,1)$ and is related to the power $s$ of the fractional operator $\calLs$ by the formula $\alpha = 1 -2s$.

Finally, the relation $a \lesssim b$ indicates that $a \leq Cb$, with a constant $C$ that depends neither on $a$ or $b$ nor the discretization parameters. The value of $C$ might change at each occurrence.

\subsection{Fractional powers of a second order elliptic operator}
\label{subsec:frac_laplacian}
We consider the definition based on spectral theory \cite{Pablo,CDDS:11,NOS}. The operator $\mathcal{L}^{-1}:L^2(\Omega) \rightarrow L^2(\Omega)$, that solves $\mathcal{L}w = f$ in $\Omega$ and $w=0$ on $\partial \Omega$, is compact, symmetric, and positive, so its spectrum $\{ \lambda_k^{-1} \}_{k\in \mathbb N}$ is discrete, real, and positive and accumulates at zero. Moreover, the eigenfunctions 
\begin{equation}
\label{eq:eigenfunctions}
 \{ \varphi_k \}_{k \in \mathbb{N}}: \qquad \mathcal{L} \varphi_k = \lambda_k \varphi_k \textrm{ in } \Omega, \qquad \varphi_k = 0 \textrm{ on } \partial\Omega, \qquad k \in \mathbb{N}, 
\end{equation}
form an orthonormal basis of $L^2(\Omega)$. With this spectral decomposition at hand, the fractional powers of $\mathcal{L}$ can be defined, for $w \in C_0^{\infty}(\Omega)$, by
\begin{equation}
  \label{def:second_frac}
  \calLs w  = \sum_{k=1}^\infty \lambda_k^{s} w_k \varphi_k,
\end{equation} 
where $w_k = \int_{\Omega} w \varphi_k $ and $s \in (0,1)$. By density $\calLs$ can be extended to the space
\begin{equation}
\label{def:Hs}
  \Hs = \left\{ w = \sum_{k=1}^\infty w_k \varphi_k: 
  \sum_{k=1}^{\infty} \lambda_k^s w_k^2 < \infty \right\},
\end{equation}
which, as a consequence of the theory of Hilbert scales \cite{Lions}, coincides with the space $[H_0^1(\Omega),L^2(\Omega) ]_{1-s}$. \EO{If $s\in(1,2]$, and $\Omega$ is, for instance, an open bounded and convex domain, we have that $\Hs = H^s(\Omega)\cap H^1_0(\Omega)$; see \cite{ShinChan}.} For $ s \in (0,1)$ we denote by $\Hsd$ the dual space of $\Hs$.

\subsection{The nonuniformly extension problem}
\label{subsec:CaffarelliSilvestre}
The localization results of Caffarelli and Silvestre \cite{BCdPS:12,CS:07,CDDS:11} require us to deal with the \emph{nonuniformly} elliptic problem \eqref{eq:alpha_harm} and consequently with Lebesgue and Sobolev spaces with the weight $y^{\alpha}$ for $\alpha \in (-1,1)$. Following \cite{Javier,Turesson}, we define, for an open set $D \subset \R^{n+1}$, the weighted Lebesgue space 
\[
L^2(|y|^\alpha,D) = \left\{w \in L^1_{\mathrm{loc}}(D): \| w \|_{L^2(|y|^{\alpha},D)}^2 = \int_{D}|y|^{\alpha} w^2 < \infty \right\},
\]
and the weighted Sobolev space
\[
H^1(|y|^{\alpha},D) =
  \left\{ w \in L^2(|y|^{\alpha},D): \| w \|_{H^1(|y|^{\alpha},D)} < \infty \right\},
\]
where 
\begin{equation}
\label{wH1norm}
\| w \|_{H^1(|y|^{\alpha},D)} =
\big( \| w \|^2_{L^2(|y|^{\alpha},D)} + \| \nabla w \|^2_{L^2(|y|^{\alpha},D)} \big)^{\frac{1}{2}}.
\end{equation}

Since $\alpha = 1-2s \in (-1,1)$, we have that $|y|^\alpha$ belongs to the Muckenhoupt class $A_2(\R^{n+1})$ \cite{Javier,FKS:82,GU,Muckenhoupt,Turesson}. Consequently, $H^1(|y|^{\alpha},D)$ endowed with the norm \eqref{wH1norm} is Hilbert and $C^{\infty}(D) \cap H^1(|y|^{\alpha},D)$ is dense in $H^1(|y|^{\alpha},D)$ (cf.~\cite[Proposition 2.1.2, Corollary 2.1.6]{Turesson}, \cite{KO84} and \cite[Theorem~1]{GU}). For convenience, we recall the definition of the Muckenhoupt class $A_2$ \cite{Muckenhoupt,Turesson}.

\begin{definition}[Muckenhoupt class $A_2$]
 \label{def:Muckenhoupt}
Let $\omega$ be a weight, that is $\omega \in L^1_{\mathrm{loc}}(\R^n)$ and $\omega >0$ a.e. in $\R^n$ with $n \geq 1$. We say that $\omega \in A_2(\R^n)$ if
\begin{equation}
\label{eq:Muckenhoupt}
  C_{2,\omega} = \sup_{B} \left( \fint_{B} \omega \right)
            \left( \fint_{B} \omega^{-1} \right) < \infty,
\end{equation}
where the supremum is taken over all balls $B$ in $\R^n$. 
\end{definition}

We now define the suitable weighted Sobolev space to analyze problem \eqref{eq:alpha_harm}:
\begin{equation*}
\HL(y^{\alpha},\C) = \left\{ w \in H^1(y^\alpha,\C): w = 0 \textrm{ on } \partial_L \C\right\}.
\end{equation*}
On this space, we have the following \emph{weighted Poincar\'e inequality} 
\begin{equation}
\label{Poincare_ineq}
\| w \|_{L^2(y^{\alpha},\C)} \lesssim \| \nabla w \|_{L^2(y^{\alpha},\C)}
\quad \forall w \in \HL(y^{\alpha},\C);
\end{equation}
see \cite[inequality (2.21)]{NOS}. Consequently, the seminorm on $\HL(y^{\alpha},\C)$ is equivalent to the norm \eqref{wH1norm}. For $w \in H^1(y^{\alpha},\C)$, $\tr w$ denotes its trace onto $\Omega \times \{ 0 \}$. We recall that, for $\alpha = 1-2s$, \cite[Proposition 2.5]{NOS} yields
\begin{equation}
\label{Trace_estimate}
\tr \HL(y^\alpha,\C) = \Hs,
\qquad
  \|\tr w\|_{\Hs} \leq C_{\tr} \| w \|_{\HLn(y^\alpha,\C)}.
\end{equation}

The weak formulation of problem \eqref{eq:alpha_harm} reads as follows: Find $\ue \in \HL(y^{\alpha},\C)$ such that
\begin{equation}
\label{eq:alpha_harm_weak}
 a(\ue ,\phi) =  \langle \zsf , \tr \phi \rangle_{\Hsd \times \Hs} \quad \forall \phi \in \HL(y^{\alpha},\C),
\end{equation}
where, for $w, \phi \in \HL(y^{\alpha},\C)$, the bilinear form $a$ is defined by
\begin{equation}
\label{eq:a}
a(w,\phi) =  \frac{1}{d_s} \int_{\C} y^{\alpha} \mathbf{A}(x) \nabla w  \cdot \nabla \phi,
\end{equation}
and $\langle \cdot, \cdot \rangle_{\Hsd \times \Hs}$ denotes the duality pairing between $\Hs$ and $\Hsd$ which, as a consequence of \eqref{Trace_estimate}, is well defined. We recall that the matrix of coefficients $\mathbf{A}(x',y) =  \textrm{diag} \{A(x'),1\}$ for all $(x',y) \in \C$. 

The conditions on $A$ and \eqref{Poincare_ineq} imply that $a$ is bounded and coercive in $\HL(y^\alpha, \C)$. Consequently, the well--posedness of  \eqref{eq:alpha_harm_weak} follows from the Lax--Milgram Lemma. In the manuscript, we will use repeatedly that $a(w,w)^{1/2}$ is a norm equivalent to \eqref{wH1norm}. We present the following estimate for problem \eqref{eq:alpha_harm_weak} \cite[Proposition 2.1]{CDDS:11}:
\begin{equation}
\label{eq:estimate_s}
  \| \nabla \ue \|_{L^2(y^{\alpha},\C)} \lesssim \| \usf \|_{\Hs} \lesssim \| \zsf \|_{\Hsd}.
\end{equation}

\EO{Define the \emph{Dirichlet-to-Neumann} operator $\textrm{N} : \Hs \to \Hsd$ by
\[
   \usf \in \Hs \longmapsto
   \textrm{N}(\usf) = \partial_{\nu}^{\alpha} \ue \in \Hsd.
\]  
We thus present the Caffarelli--Silvestre extension result \cite{CS:07,CDDS:11,ST:10}.}

\begin{theorem}[Caffarelli--Silvestre extension result]
\label{TH:CS}
If $\ue \in \HL(y^{\alpha},\C)$ and $u \in \Hs$ solve \eqref{eq:alpha_harm_weak} and \eqref{eq:fractional} for $\zsf \in \Hsd$, respectively, then $\usf = \tr \ue$ and 
\begin{equation*}
d_s \calLs \usf \EO{= \textrm{N}(\usf) =} \partial_{\nu}^{\alpha} \ue \text{ in } \Omega.
\end{equation*}
\end{theorem}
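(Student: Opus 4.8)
The plan is to follow the classical route of \cite{CDDS:11,ST:10}: diagonalize $\mathcal{L}$ and separate variables in \eqref{eq:alpha_harm}. Writing $\zsf = \sum_{k\geq1}\zsf_k\varphi_k$ (with $\zsf_k$ the generalized Fourier coefficients of $\zsf$ in the basis \eqref{eq:eigenfunctions}), the solution of \eqref{eq:fractional} is, by \eqref{def:second_frac}, $\usf = \sum_{k\geq1}\usf_k\varphi_k$ with $\usf_k = \lambda_k^{-s}\zsf_k$, and $\usf\in\Hs$ since $\sum_k\lambda_k^s\usf_k^2 = \sum_k\lambda_k^{-s}\zsf_k^2 = \|\zsf\|_{\Hsd}^2<\infty$. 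I would then propose as extension the series
\begin{equation*}
\ue(x',y) = \sum_{k\geq1}\usf_k\,\varphi_k(x')\,\psi_s(\sqrt{\lambda_k}\,y),
\end{equation*}
where $\psi_s$ is the solution of the singular ODE $\psi_s'' + \tfrac{\alpha}{r}\psi_s' - \psi_s = 0$ on $(0,\infty)$ that decays at $+\infty$ and satisfies $\psi_s(0)=1$; explicitly, $\psi_s(r) = \tfrac{2^{1-s}}{\Gamma(s)}\,r^s K_s(r)$, with $K_s$ the modified Bessel function of the second kind.

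The first step would be to show that this series lies in $\HL(y^{\alpha},\C)$ and solves \eqref{eq:alpha_harm_weak}. From the expansion of $K_s$ near the origin one reads off $\psi_s(r)\to1$ and $r^\alpha\psi_s'(r)\to -d_s$ as $r\to0^+$ (using $2s-1=-\alpha$, the reflection formula $\Gamma(s)\Gamma(1-s)=\pi/\sin\pi s$, and $d_s = 2^{\alpha}\Gamma(1-s)/\Gamma(s)$), and from the exponential decay of $K_s$ at infinity together with the ODE in the form $(r^{\alpha}\psi_s')' = r^{\alpha}\psi_s$, the normalization identity
\begin{equation*}
\int_0^\infty r^{\alpha}\big(|\psi_s(r)|^2 + |\psi_s'(r)|^2\big)\,dr = d_s .
\end{equation*}
Expanding $\nabla\ue$ in $\{\varphi_k\}$, using $\int_\Omega A\nabla\varphi_k\cdot\nabla\varphi_j = \lambda_k\delta_{kj}$ and the rescaling $r=\sqrt{\lambda_k}\,y$, the bilinear form \eqref{eq:a} evaluates to $a(\ue,\ue) = \sum_k\lambda_k^s\usf_k^2 = \|\usf\|_{\Hs}^2<\infty$; since $a(\cdot,\cdot)^{1/2}$ is equivalent to the $\HL(y^{\alpha},\C)$-norm, this gives $\ue\in\HL(y^{\alpha},\C)$. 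Testing \eqref{eq:alpha_harm_weak} with $\phi=\varphi_j(x')\eta(y)$ and integrating by parts in $y$ via the ODE reduces the weak identity to $\int_0^\infty r^{\alpha}\psi_s\tilde\eta\,dr + \int_0^\infty r^{\alpha}\psi_s'\tilde\eta'\,dr = d_s\,\tilde\eta(0)$ (with $\tilde\eta(r)=\eta(r/\sqrt{\lambda_j})$), i.e.\ exactly the normalization above; density of such tensor products in $\HL(y^{\alpha},\C)$ and uniqueness from the Lax--Milgram Lemma then identify this $\ue$ with the weak solution in the statement. Since $\psi_s(0)=1$, Parseval's identity gives $\tr\ue = \sum_k\usf_k\varphi_k = \usf$.

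The last step is to compute $\partial_{\nu}^{\alpha}\ue$. Fixing $k$ and setting $r=\sqrt{\lambda_k}\,y$, and using $(1-\alpha)/2=s$, one gets $y^{\alpha}\partial_y[\psi_s(\sqrt{\lambda_k}\,y)] = \lambda_k^{s}\,r^{\alpha}\psi_s'(r)$, whence $\lim_{y\to0^+}y^{\alpha}\partial_y[\psi_s(\sqrt{\lambda_k}\,y)] = -d_s\lambda_k^{s}$ by the small-$r$ behavior recalled above. Passing this limit through the series in the distributional sense of \eqref{def:lf} (legitimate, as in \cite{ST:10,CS:07}, by the $\lambda_k$-uniform control of $\psi_s,\psi_s'$ from the Bessel asymptotics) yields
\begin{equation*}
\partial_{\nu}^{\alpha}\ue = -\lim_{y\to0^+}y^{\alpha}\ue_y = d_s\sum_{k\geq1}\lambda_k^{s}\usf_k\varphi_k = d_s\,\calLs\usf \quad\text{in }\Omega,
\end{equation*}
which equals $\textrm{N}(\usf)$ by definition of the Dirichlet--to--Neumann operator.

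I expect the main obstacle to be the functional-analytic bookkeeping rather than the algebra: proving the separated series converges in $\HL(y^{\alpha},\C)$, justifying term-by-term differentiation in $y$, and — crucially — legitimizing the interchange of the limit \eqref{def:lf} defining $\partial_{\nu}^{\alpha}\ue$ with the infinite sum in the topology of $\Hsd$. All of this rests on sharp $\lambda_k$-uniform estimates for $\psi_s$ and $\psi_s'$ obtained from the Bessel asymptotics, and on carefully tracking the constant $d_s = 2^{\alpha}\Gamma(1-s)/\Gamma(s)$ through the normalization of $\psi_s$.
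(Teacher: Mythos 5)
Your argument is correct and is essentially the proof the paper relies on: Theorem \ref{TH:CS} is stated here without proof, with a citation to \cite{CS:07,CDDS:11,ST:10}, and your separation-of-variables construction with $\psi_s(r)=\tfrac{2^{1-s}}{\Gamma(s)}r^sK_s(r)$ is precisely the argument of \cite{CDDS:11,ST:10} and coincides with the representation formula \eqref{eq:representation_formula}--\eqref{psik} that the paper itself records. The key computations (the limit $r^{\alpha}\psi_s'(r)\to -d_s$, the normalization $\int_0^\infty r^{\alpha}(\psi_s^2+(\psi_s')^2)\,dr=d_s$, and the identity $a(\ue,\ue)=\|\usf\|_{\Hs}^2$) all check out.
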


\section{The fractional and extended optimal control problems}
\label{sec:control}
For $J$ defined in \eqref{def:J}, the \emph{fractional optimal control problem} reads as follows: Find
$
\text{min } J(\usf,\zsf)
$
subject to the fractional state equation \eqref{eq:fractional} and the control constraints \eqref{eq:cc}. The set of \textit{admissible controls} $\Zad$ is defined by
\begin{equation*}
 \Zad= \{ \wsf \in L^2(\Omega): \asf \leq \wsf(x') \leq \bsf, \textrm{~~~a.e.~~~}  x' \in \Omega \},
\end{equation*}
where $\asf,\bsf \in \mathbb{R}$ and satisfy $\asf < \bsf$. The desired state $\usf_d \in L^2(\Omega)$ and $\vartheta >0$.

We define the fractional control--to--state operator map $\mathbf{S}: \Hsd \ni \zsf \mapsto \usf \in \Hs$, where $\usf=\usf(\zsf)$ solves \eqref{eq:fractional}. The operator $\mathbf{S}$ is linear and, as a consequence of \eqref{eq:estimate_s}, bounded from $\Hsd$ into $\Hs$. Given a control $\zsf \in \Hsd$, we define the fractional adjoint state $\psf = \psf(\zsf) \in \Hs$ as $\psf = \mathbf{S}(\usf - \usf_d)$.  With these elements at hand, we recall the following result \cite[Theorem 3.4]{AO}.

\begin{theorem}[existence, uniqueness and optimality conditions]
The optimal control problem \eqref{eq:J}--\eqref{eq:cc} has a unique optimal solution $(\ousf, \ozsf)  $ 
$\in$ $ \Hs \times \Zad$. The optimality conditions 
$\ousf = \mathbf{S} \bar{\zsf} \in \Hs$, $\opsf = \mathbf{S} ( \ousf - \usf_{\textrm{d} } ) \in \Hs$, and
\begin{equation}
 \label{vi_fractional}
 \ozsf \in \Zad, \qquad(\vartheta \ozsf + \opsf, \zsf - \ozsf )_{L^2(\Omega)} \geq 0 \quad \forall \zsf \in \Zad
\end{equation}
hold. These conditions are necessary and sufficient. 
\label{TH:fractional_control}
\end{theorem}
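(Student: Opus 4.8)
The plan is to proceed by the standard reduced-functional approach for linear-quadratic control problems, using the well-posedness of the fractional state equation already established via \eqref{eq:estimate_s}. First I would introduce the reduced cost functional $j:\Zad \to \R$ defined by $j(\zsf) = J(\mathbf{S}\zsf, \zsf)$, where $\mathbf{S}:\Hsd \to \Hs$ is the (linear, bounded) control-to-state map introduced just before the statement. Since $\mathbf{S}$ is linear and the embedding $\Hs \hookrightarrow L^2(\Omega)$ is continuous (indeed compact, as $\Hs = [H_0^1(\Omega),L^2(\Omega)]_{1-s}$), the map $\zsf \mapsto \|\mathbf{S}\zsf - \usf_d\|_{L^2(\Omega)}^2$ is convex and continuous on $L^2(\Omega)$, and the Tikhonov term $\tfrac{\vartheta}{2}\|\zsf\|_{L^2(\Omega)}^2$ with $\vartheta>0$ makes $j$ strictly convex and coercive on $L^2(\Omega)$. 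The admissible set $\Zad$ is nonempty, closed, bounded and convex in $L^2(\Omega)$. Existence of a minimizer then follows from the direct method (weak lower semicontinuity of the convex continuous functional $j$ plus weak sequential compactness of $\Zad$), and uniqueness follows from strict convexity.

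Next I would derive the first-order optimality condition. Because $j$ is Fréchet differentiable on $L^2(\Omega)$ and $\Zad$ is convex, the minimizer $\ozsf$ is characterized by the variational inequality $j'(\ozsf)(\zsf - \ozsf) \ge 0$ for all $\zsf \in \Zad$. It remains to compute $j'(\ozsf)$. By the chain rule, $j'(\zsf)\wsf = (\mathbf{S}\zsf - \usf_d, \mathbf{S}\wsf)_{L^2(\Omega)} + \vartheta(\zsf,\wsf)_{L^2(\Omega)}$. Introducing the adjoint state $\opsf = \mathbf{S}(\ousf - \usf_d)$ and using the self-adjointness of $\mathbf{S}$ (which follows from the symmetry of $\mathcal{L}$, hence of $\mathcal{L}^s$, equivalently from symmetry of the bilinear form $a$ in \eqref{eq:a}), we get $(\mathbf{S}\ozsf - \usf_d, \mathbf{S}\wsf)_{L^2(\Omega)} = (\mathbf{S}(\ousf - \usf_d), \wsf)_{L^2(\Omega)} = (\opsf, \wsf)_{L^2(\Omega)}$. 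Therefore $j'(\ozsf)\wsf = (\vartheta\ozsf + \opsf, \wsf)_{L^2(\Omega)}$, and the variational inequality becomes exactly \eqref{vi_fractional}. Convexity of $j$ guarantees that this condition is not only necessary but also sufficient, and the identifications $\ousf = \mathbf{S}\ozsf$ and $\opsf = \mathbf{S}(\ousf - \usf_d)$ are definitional.

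The only genuinely technical point — and the one I would flag as the main obstacle if one insists on full rigor — is justifying the self-adjointness of $\mathbf{S}$ and the Fréchet differentiability of $j$ on $L^2(\Omega)$ rather than merely on $\Hsd$: one must check that composing $\mathbf{S}:\Hsd \to \Hs$ with the embeddings $L^2(\Omega)\hookrightarrow \Hsd$ and $\Hs \hookrightarrow L^2(\Omega)$ yields a bounded operator on $L^2(\Omega)$ that is symmetric with respect to the $L^2(\Omega)$ inner product. This is a short spectral computation using the representation $\mathbf{S}\zsf = \sum_k \lambda_k^{-s}\zsf_k\varphi_k$ with $\zsf_k = (\zsf,\varphi_k)_{L^2(\Omega)}$, from which symmetry and the requisite mapping properties are immediate; everything else is routine. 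Since this result is quoted from \cite[Theorem 3.4]{AO}, I would keep the presentation brief, citing that reference for the details and reproducing only the derivation of \eqref{vi_fractional} for the reader's convenience.
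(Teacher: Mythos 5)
Your argument is correct and is exactly the standard reduced-functional treatment that the paper relies on: the paper itself does not prove this theorem but quotes it verbatim from \cite[Theorem 3.4]{AO}, where the proof proceeds precisely as you describe (direct method plus strict convexity from $\vartheta>0$ for existence and uniqueness, then the variational inequality $j'(\ozsf)(\zsf-\ozsf)\ge 0$ rewritten via the self-adjointness of $\mathbf{S}$, which follows from the spectral representation $\mathbf{S}\zsf=\sum_k\lambda_k^{-s}\zsf_k\varphi_k$). Your flagged technical point about the $L^2$-boundedness and symmetry of the composed operator is handled exactly by that spectral computation, so nothing is missing.
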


Since the fractional powers of $\mathcal{L}$ are non--local operators, the design and analysis of solution techniques for problem \eqref{eq:J}--\eqref{eq:cc} is far for being trivial. To circumvent such a difficulty, in reference \cite{AO} the authors invoke the localization results stated in Theorem \ref{TH:CS} and propose the following equivalent optimal control problem: Find
$
\text{min } J(\tr \ue,\zsf)
$
subject to the \emph{linear} and \emph{extended} state equation
\begin{equation}
\label{eq:extension_weak}
  a(\ue,\phi) = \langle \zsf, \tr \phi \rangle_{\Hsd \times \Hs}
    \quad \forall \phi \in \HL(y^{\alpha},\C)
\end{equation}
and the control constraints 
$
 \zsf \in \Zad;
$
the bilinear form $a$ is defined as in \eqref{eq:a}. We will refer to this problem as the \emph{extended optimal control problem}. We recall that the trace operator $\tr$ is defined in Section \ref{subsec:CaffarelliSilvestre}. The equivalence of the aforementioned optimal control problems follows from \cite[Theorem 3.12]{AO}: $\ousf(\ozsf) = \tr \oue(\ozsf)$.

We define the extended control--to--state operator $\mathbf{G}: \Hsd \ni \zsf  \mapsto \tr \ue \in \Hs$, where $\ue = \ue(\zsf)$ solves \eqref{eq:extension_weak}. Invoking Theorem \ref{TH:CS}, we conclude that the actions of $\mathbf{G}$ and $\mathbf{S}$ coincide and then that $\mathbf{G}$ is a linear and continuous mapping from $\Hsd$ into $\Hs$.

We also define the extended adjoint state $\pe = \pe(\zsf) \in \HL(y^{\alpha},\C)$, associated to the extended state $\ue = \ue(\zsf)$, as the unique solution to
\begin{equation}
\label{eq:extension_adjoint}
  a(\phi,\pe) = ( \tr \ue - \usfd, \tr \phi )_{L^2(\Omega)} \quad \forall \phi \in \HL(y^{\alpha},\C).
\end{equation}
With this definition at hand, we present the following result \cite[Theorem 3.11]{AO}.

\begin{theorem}[existence, uniqueness and optimality system]
\label{TH:extended_control}
The extended optimal control problem has a unique optimal solution $(\oue, \ozsf)$. The optimality system
\begin{equation*}
\begin{dcases}
\oue = \oue(\ozsf) \in \HL(y^{\alpha},\C) \textrm{ solution to } \eqref{eq:extension_weak}, \\
\ope = \ope(\ozsf)  \in \HL(y^{\alpha},\C) \textrm{ solution to } \eqref{eq:extension_adjoint}, 
\\
\ozsf \in \Zad, \quad (\tr \bar{\pe} + \vartheta \ozsf , \zsf - \ozsf )_{L^2(\Omega)} \geq 0 \quad \forall \zsf \in \Zad,
\end{dcases}
\end{equation*}
holds. These conditions are necessary and sufficient. 
\end{theorem}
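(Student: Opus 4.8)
The plan is to establish Theorem \ref{TH:extended_control} by transferring the analogous statement for the fractional problem (Theorem \ref{TH:fractional_control}) through the equivalence $\ousf(\ozsf) = \tr\oue(\ozsf)$ recorded after \eqref{eq:extension_weak}, together with the identification of the control-to-state operators $\mathbf{G}=\mathbf{S}$ via Theorem \ref{TH:CS}. First I would reduce the extended problem to a reduced optimization problem in the control variable alone: since \eqref{eq:extension_weak} is uniquely solvable by Lax--Milgram, the map $\zsf \mapsto \ue(\zsf)$ is well defined, linear and continuous, so $j(\zsf) := J(\tr\ue(\zsf),\zsf)$ is a well-defined functional on $L^2(\Omega)$. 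The reduced functional is the sum of a nonnegative convex quadratic term in $\tr\ue(\zsf)$ (hence convex in $\zsf$ since $\zsf\mapsto \tr\ue(\zsf)$ is affine-linear) and the strictly convex term $\tfrac{\vartheta}{2}\|\zsf\|_{L^2(\Omega)}^2$ with $\vartheta>0$; thus $j$ is strictly convex, continuous, and coercive on $L^2(\Omega)$. The admissible set $\Zad$ is nonempty, closed, bounded and convex in $L^2(\Omega)$. The direct method of the calculus of variations (weak lower semicontinuity of the convex continuous $j$, weak compactness of $\Zad$) then yields existence; strict convexity gives uniqueness of the optimal control $\ozsf$, and then $\oue = \oue(\ozsf)$ is determined as the unique solution of \eqref{eq:extension_weak} with datum $\ozsf$.

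Next I would derive the optimality system. Since $j$ is convex and Gateaux differentiable and $\Zad$ is convex, the unique minimizer is characterized by the variational inequality $j'(\ozsf)(\zsf - \ozsf)\ge 0$ for all $\zsf\in\Zad$. It remains to compute $j'(\ozsf)$ in the stated form. Differentiating, $j'(\zsf)h = (\tr\ue(\zsf) - \usfd,\ \tr\ue'(\zsf)h)_{L^2(\Omega)} + \vartheta(\zsf,h)_{L^2(\Omega)}$, where $\ue'(\zsf)h$ solves $a(\ue'(\zsf)h,\phi) = \langle h,\tr\phi\rangle_{\Hsd\times\Hs}$ by linearity of \eqref{eq:extension_weak}. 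Introducing the adjoint state $\pe=\pe(\zsf)$ defined by \eqref{eq:extension_adjoint} and testing that identity with $\phi = \ue'(\zsf)h$ while testing the sensitivity equation with $\phi = \pe$, the two right-hand sides reveal $(\tr\ue(\zsf)-\usfd,\tr\ue'(\zsf)h)_{L^2(\Omega)} = a(\ue'(\zsf)h,\pe) = \langle h,\tr\pe\rangle_{\Hsd\times\Hs} = (\tr\pe, h)_{L^2(\Omega)}$, the last equality holding because $h\in L^2(\Omega)\subset\Hsd$ and $\tr\pe\in\Hs\subset L^2(\Omega)$. Hence $j'(\zsf)h = (\tr\pe(\zsf) + \vartheta\zsf,\ h)_{L^2(\Omega)}$, and evaluating at $\zsf=\ozsf$, $h=\zsf-\ozsf$ produces exactly the third line of the optimality system, with $\oue,\ope$ as in the first two lines. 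Sufficiency of these conditions follows because for a convex differentiable functional on a convex set the first-order variational inequality is also sufficient for global optimality.

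The main obstacle is largely bookkeeping rather than a deep difficulty: one must be careful that the duality pairing $\langle\cdot,\cdot\rangle_{\Hsd\times\Hs}$ appearing in \eqref{eq:extension_weak} genuinely reduces to the $L^2(\Omega)$ inner product whenever the argument lies in $L^2(\Omega)$ — this is licensed by the trace identity \eqref{Trace_estimate} together with the Gelfand triple $\Hs \hookrightarrow L^2(\Omega) \hookrightarrow \Hsd$ — and that the adjoint problem \eqref{eq:extension_adjoint} is itself well posed, which again follows from coercivity and boundedness of $a$ and the continuity of $\tr$ from \eqref{Trace_estimate}, the right-hand side $\phi\mapsto(\tr\ue-\usfd,\tr\phi)_{L^2(\Omega)}$ being a bounded linear functional on $\HL(y^\alpha,\C)$. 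An alternative, and perhaps cleaner, route is to invoke directly the equivalence with the fractional problem: by \cite[Theorem 3.12]{AO} the extended and fractional problems have the same optimal control, and Theorem \ref{TH:CS} identifies $\tr\oue$ with $\ousf$ and $\tr\ope$ with $\opsf$; then the optimality system of Theorem \ref{TH:extended_control} is nothing but a rewriting of \eqref{vi_fractional}. I would present the self-contained argument above as the proof and remark on the equivalence as a consistency check.
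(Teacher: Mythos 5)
The paper does not prove this theorem; it simply recalls it from \cite[Theorem 3.11]{AO}, and your argument (direct method on the reduced, strictly convex functional $j$ over the weakly compact convex set $\Zad$, followed by the adjoint-based computation of $j'$ to obtain the variational inequality) is precisely the standard proof for such linear--quadratic problems and is correct. The only point worth stating explicitly, which you already flag, is that $a(\ue'(\zsf)h,\pe)$ is evaluated two ways — once via the sensitivity equation tested with $\pe$ and once via \eqref{eq:extension_adjoint} tested with $\ue'(\zsf)h$ — and that $\langle h,\tr\pe\rangle_{\Hsd\times\Hs}=(\tr\pe,h)_{L^2(\Omega)}$ for $h\in L^2(\Omega)$; with that, the proof is complete.
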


\EO{We conclude this section with a representation formula for $\ue$ that is based on the eigenpairs $\{ \lambda_k, \varphi_k\}$ defined in \eqref{eq:eigenfunctions}. Let $\usf(x')=\sum_k \usf_k\varphi_k(x')$ be the solution to \eqref{eq:fractional}, then the solution $\ue$ to problem \eqref{eq:extension_weak} can be written as follows:
\begin{equation}
\label{eq:representation_formula}
  \ue(x,t) = \sum_{k=1}^\infty \usf_k \varphi_k(x') \psi_k(y),
\end{equation}
where, for $k \in \mathbb{N},$ $\psi_k$ solves
\begin{equation}
\label{psik}
\psi_k'' + \alpha y^{-1}\psi_k' - \lambda_k \psi_k = 0, \quad \psi_k(0) = 1, \quad \psi_k(y) \to 0 \mbox{ as } y \to \infty.
\end{equation}
If $s=1/2$, then $\psi_k(y) = e^{-\sqrt{\lambda_k}y}$. If $s \in (0,1)\setminus\{1/2\}$ and $c_s = 2^{1-s}/\Gamma(s)$, then
\[
  \psi_k(y) = c_s \left(\sqrt{\lambda_k}y\right)^s K_s(\sqrt{\lambda_k} y),
\]
where $K_s$ is the modified Bessel function of the second kind \cite[Chapter~9.6]{Abra}. We refer the reader to \cite[Section 2.4]{NOS} and \cite[Proposition~2.1]{CDDS:11} for details.}

\subsection{Regularity of the fractional optimal control}
\label{subsec:control}

Since we will be concerned with the approximation of the solution to the extended optimal control problem, it is essential to study the regularity properties of $(\bar{\usf},\bar{\psf},\bar{\zsf})$. To derive such results, we will assume different conditions on the smoothness of the domain $\Omega$ and the matrix of coefficients $A$ defining $\mathcal{L}$ in \eqref{eq:L}; we will be precise regarding these assumptions when is needed. We start by recalling the results of Lemma 3.5 in \cite{AO}.

\begin{lemma}[regularity of the optimal control]
\label{le:reg_control}
Let $\ozsf \in \Zad$ be the fractional optimal control. If $\Omega$ is convex, $A \in C^{0,1}(\bar \Omega)$, $\usfd \in \mathbb{H}^{1-s}(\Omega)$ and $\asf < 0 < \bsf$, then $\ozsf \in H_0^1(\Omega)$.
\end{lemma}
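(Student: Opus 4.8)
The plan is to exploit the variational inequality \eqref{vi_fractional} to obtain a pointwise (projection) formula for $\ozsf$ and then bootstrap regularity from that of the adjoint state $\opsf$. First I would recall that \eqref{vi_fractional} is equivalent to the characterization
\[
  \ozsf(x') = \Pi_{[\asf,\bsf]}\!\left( -\frac{1}{\vartheta}\,\opsf(x') \right) \quad \text{a.e. } x'\in\Omega,
\]
where $\Pi_{[\asf,\bsf]}(v) = \min\{\bsf,\max\{\asf,v\}\}$ is the pointwise projection onto the admissible interval. Since $\asf < 0 < \bsf$, the projection $\Pi_{[\asf,\bsf]}$ is a Lipschitz map that fixes $0$, so it maps $H^1_0(\Omega)$ into $H^1_0(\Omega)$ (truncation preserves $H^1$, and the sign conditions on $\asf,\bsf$ guarantee the zero boundary trace is preserved). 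Hence it suffices to show $\opsf \in H^1_0(\Omega)$, equivalently $\opsf \in \mathbb{H}^1(\Omega)$.

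Next I would establish regularity of the optimal state and adjoint state by a short bootstrap on the fractional equations. Since $\ozsf \in \Zad \subset L^2(\Omega) \subset \mathbb{H}^{-s}(\Omega)$ (using $s\in(0,1)$ and the continuous embedding $L^2(\Omega)\hookrightarrow \mathbb{H}^{-s}(\Omega)$), the state $\ousf = \mathbf{S}\ozsf$ satisfies $\calLs\ousf = \ozsf$ with right-hand side in $L^2(\Omega)$; classical elliptic regularity for the spectrally defined fractional operator on a convex domain with $A\in C^{0,1}(\bar\Omega)$ then gives $\ousf \in \mathbb{H}^{2s}(\Omega) \cap H^1_0(\Omega)$, and in particular $\ousf \in L^2(\Omega)$ with control of its norm. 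Therefore $\ousf - \usfd \in L^2(\Omega)$, and since by hypothesis $\usfd \in \mathbb{H}^{1-s}(\Omega)$ I would actually want to write $\ousf - \usfd \in \mathbb{H}^{\min\{2s,1-s\}}(\Omega)$ — but all that is really needed is that the adjoint right-hand side lies in $\mathbb{H}^{1-s}(\Omega)$: indeed $\ousf \in \mathbb{H}^{2s}(\Omega)$ and one checks $2s \ge 1-s$ fails for small $s$, so here the cleaner route is to use $\ousf \in \mathbb{H}^{s}(\Omega)$ (immediate from $\mathbf S:\mathbb H^{-s}\to\mathbb H^{s}$) together with the embedding into $L^2$, giving $\ousf - \usfd \in L^2(\Omega)$. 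Then $\opsf = \mathbf{S}(\ousf - \usfd)$ solves $\calLs\opsf = \ousf - \usfd$ with right-hand side in $L^2(\Omega)$, and the same elliptic regularity result yields $\opsf \in \mathbb{H}^{2s}(\Omega)$. When $s \ge 1/2$ this already gives $\opsf \in \mathbb{H}^{1}(\Omega) \hookrightarrow H^1_0(\Omega)$ and we are done; for $s < 1/2$ one more bootstrap step is needed, using that the first application of $\mathbf S$ gives $\ousf \in \mathbb{H}^{2s}(\Omega)$ so that $\ousf - \usfd \in \mathbb{H}^{\min\{2s,1-s\}}(\Omega) = \mathbb{H}^{2s}(\Omega)$ (since $2s < 1-s$ exactly when $s<1/3$; and for $s\in[1/3,1/2)$ the right-hand side is in $\mathbb{H}^{1-s}(\Omega)$), whence $\opsf \in \mathbb{H}^{2s + 2s}(\Omega)$ or $\mathbb{H}^{2s+(1-s)}(\Omega)$ respectively — iterating finitely many times raises the regularity index past $1$, giving $\opsf\in\mathbb H^1(\Omega)=H^1_0(\Omega)$.

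The main obstacle is the fractional elliptic regularity step: one must know that $\calLs w = f \in \mathbb{H}^{r}(\Omega)$ on a convex domain with Lipschitz coefficients implies $w \in \mathbb{H}^{r + 2s}(\Omega)$ (at least up to the threshold where the spaces $\mathbb{H}^{\sigma}$ stop coinciding with the standard Sobolev spaces with homogeneous trace, i.e.\ $\sigma \le 2$ and $\sigma - 1/2 \notin \mathbb{Z}$). This is where convexity and $A\in C^{0,1}(\bar\Omega)$ enter — they guarantee $H^2$-regularity for $\mathcal L$ itself, hence $\mathbb{H}^2(\Omega) = H^2(\Omega)\cap H^1_0(\Omega)$ with $\mathcal L:\mathbb H^2\to L^2$ an isomorphism, and then the fractional statement follows by interpolation between this and the trivial $\mathbb H^0 \leftrightarrow \mathbb H^{-2s}$ mapping property of $\mathbf S$ together with the Hilbert-scale identification $\mathbb{H}^{\sigma} = [\mathbb H^2, L^2]_{1-\sigma/2}$ for $\sigma\in[0,2]$. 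Once this regularity shift is in hand, combining it with the pointwise projection formula and the fact that $\Pi_{[\asf,\bsf]}$ with $\asf<0<\bsf$ preserves $H^1_0(\Omega)$ closes the argument.
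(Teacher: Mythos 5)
Your proposal is correct and takes essentially the same route as the paper, which does not reprove the lemma but imports it from \cite[Lemma 3.5]{AO} and explicitly describes that proof as a bootstrap built on the projection formula \eqref{projection_formula}, with the hypothesis $\asf<0<\bsf$ used precisely to preserve the zero boundary values under the pointwise truncation. Your combination of the projection formula, the $2s$ regularity shift for $\calLs$ on a convex domain with $A\in C^{0,1}(\bar\Omega)$ (via $H^2$--regularity of $\mathcal{L}$ and interpolation on the Hilbert scale), and finitely many bootstrap iterations is exactly that argument.
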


 We recall the so--called \emph{projection formula} for the optimal control $\bar \zsf$. If $\vartheta >0$ and $\opsf$ denotes the fractional optimal adjoint state, then the projection formula
\begin{equation}
\label{projection_formula}
    \ozsf(x') = \textrm{proj}_{[\asf,\bsf]} \left(- \frac{1}{\vartheta} \opsf(x') \right)
\end{equation}
is equivalent to \eqref{vi_fractional}, where $\textrm{proj}_{[\asf,\bsf]}(v) = \min\{ \bsf, \max \{ \asf,v \} \}$; see \cite[Section 2.8]{Tbook}. 
\EO{We comment on the assumption $\asf < 0 < \bsf$ of Lemma \ref{le:reg_control}: the bootstrap argument developed in the proof of Lemma 3.5 in \cite{AO} relies on the projection formula \eqref{projection_formula} and thus requires the assumption $\asf < 0 < \bsf$ in order to preserve the boundary values of $\ozsf \in \mathbb{H}^s(\Omega)$. We refer the reader to the proof of \cite[Lemma 3.5]{AO} for details.} 

On the basis of the projection formula \eqref{projection_formula} we derive the following regularity properties, on Sobolev spaces, for $\bar \zsf$ and $\bar \psf$.

\begin{lemma}[regularity of the optimal state and adjoint state]
\label{le:reg_state}
\EO{If $\usfd \in \mathbb{H}^{1-s}(\Omega)$, $\Omega$ is convex, $\asf < 0 < \bsf$ and $A \in C^{0,1}(\bar \Omega)$, then $\ousf \in \mathbb{H}^{\kappa}(\Omega)$ for $\kappa = \min \{1+2s,2\}$. In addition, if $\usfd \in H_0^1(\Omega)$, then $\opsf \in \mathbb{H}^{\kappa}(\Omega)$ for $\kappa = \min \{1+2s,2\}$.}
\end{lemma}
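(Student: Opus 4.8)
The plan is to derive the Sobolev regularity of $\ousf$ and $\opsf$ from the already-established regularity $\ozsf \in H_0^1(\Omega)$ of Lemma~\ref{le:reg_control}, together with elliptic regularity for the operator $\mathcal{L}^s$ on convex domains and a bootstrap via the projection formula \eqref{projection_formula}. First I would treat the state. By the optimality system, $\ousf = \mathbf{S}\ozsf$ solves $\mathcal{L}^s \ousf = \ozsf$ in $\Omega$ with $\ousf = 0$ on $\partial\Omega$. Since $\ozsf \in H_0^1(\Omega) = \mathbb{H}^1(\Omega)$, the mapping property of $\mathbf{S} = \mathcal{L}^{-s}$ on the Hilbert scale $\{\mathbb{H}^r(\Omega)\}$ — which shifts regularity by $2s$ — yields $\ousf \in \mathbb{H}^{1+2s}(\Omega)$, provided $1+2s$ stays within the range where $\mathbb{H}^r(\Omega)$ admits the characterization as a standard Sobolev space. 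For a convex domain, the $H^2$-regularity theory for second-order elliptic operators with Lipschitz coefficients (so $\mathbb{H}^2(\Omega) = H^2(\Omega) \cap H_0^1(\Omega)$, cf.\ the remark after \eqref{def:Hs}) caps the gain at $\kappa = \min\{1+2s, 2\}$, which is exactly the claimed exponent. The regularity $A \in C^{0,1}(\bar\Omega)$ and convexity of $\Omega$ are precisely what is needed to invoke this $H^2$ theory.

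For the adjoint state, $\opsf = \mathbf{S}(\ousf - \usfd)$ solves $\mathcal{L}^s \opsf = \ousf - \usfd$ in $\Omega$, $\opsf = 0$ on $\partial\Omega$. We have just shown $\ousf \in \mathbb{H}^{\kappa}(\Omega) \subset \mathbb{H}^1(\Omega)$, and by hypothesis $\usfd \in H_0^1(\Omega) = \mathbb{H}^1(\Omega)$; hence $\ousf - \usfd \in \mathbb{H}^1(\Omega)$, and applying the same $2s$-shift and the convex-domain $H^2$ cap gives $\opsf \in \mathbb{H}^{\kappa}(\Omega)$ with the same $\kappa = \min\{1+2s,2\}$. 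Note the stronger assumption $\usfd \in H_0^1(\Omega)$ (versus merely $\usfd \in \mathbb{H}^{1-s}(\Omega)$ for the statement about $\ousf$) is used here solely to ensure the right-hand side $\ousf - \usfd$ lies in $\mathbb{H}^1(\Omega)$ rather than only $\mathbb{H}^{1-s}(\Omega)$, which would otherwise limit the gain to $\mathbb{H}^{1-s+2s} = \mathbb{H}^{1+s}$.

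The main obstacle, and the point requiring care, is the bootstrap consistency: the argument for $\ousf$ uses $\ozsf \in H_0^1(\Omega)$, but Lemma~\ref{le:reg_control} itself establishes this only under the running hypotheses ($\Omega$ convex, $A \in C^{0,1}(\bar\Omega)$, $\usfd \in \mathbb{H}^{1-s}(\Omega)$, $\asf < 0 < \bsf$) — so I must verify these are all in force, which they are. A subtler point is that the Hilbert-scale interpolation identity $\mathbb{H}^r(\Omega) = H^r(\Omega) \cap H_0^1(\Omega)$ used above is only valid for $r \in [1,2]$ on convex domains (for $r > 2$ boundary-compatibility conditions enter), which is exactly why the exponent is truncated at $2$; one should state this explicitly rather than claim $\ousf \in \mathbb{H}^{1+2s}$ unconditionally when $s > 1/2$. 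I would also remark that for $s \in (0,1/2]$ one genuinely obtains $\kappa = 1+2s \le 2$ with no truncation, while for $s \in (1/2,1)$ the truncation to $2$ is operative; in either case the mapping $\mathbf{S}: \mathbb{H}^1(\Omega) \to \mathbb{H}^{\min\{1+2s,2\}}(\Omega)$ is the single analytic fact driving the whole proof, and everything else is bookkeeping with the optimality relations $\ousf = \mathbf{S}\ozsf$ and $\opsf = \mathbf{S}(\ousf - \usfd)$.
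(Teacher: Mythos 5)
Your proposal is correct and follows essentially the same route as the paper: invoke Lemma \ref{le:reg_control} to get $\ozsf \in H_0^1(\Omega)$, use the $2s$-regularity shift of $\mathcal{L}^{-s}$ (the paper phrases this as $\mathcal{L}^s$ being a pseudodifferential operator of order $2s$) together with convex-domain $H^2$ elliptic regularity (Grisvard, Theorem 3.2.1.2) to cap the exponent at $\kappa=\min\{1+2s,2\}$, and then repeat the argument for $\opsf$ after observing that $\ousf-\usfd \in H_0^1(\Omega)$. Your explicit remarks on why $\usfd\in H_0^1(\Omega)$ is needed for the adjoint and on the range of validity of the identification $\mathbb{H}^r(\Omega)=H^r(\Omega)\cap H_0^1(\Omega)$ are consistent with, and slightly more detailed than, the paper's own (terser) proof.
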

\begin{proof}
\EO{An application of Lemma \ref{le:reg_control} yields $\ozsf \in H_0^1(\Omega)$. Then, since $\Omega$ is convex, we invoke the fact that $\mathcal{L}^s$ is a pseudodifferential operator of order $2s$ and Theorem 3.2.1.2 in \cite{Grisvard}, to conclude that $\bar{\usf}$, the solution to problem \eqref{eq:fractional}, belongs to $\mathbb{H}^{\kappa}(\Omega)$, where $\kappa = \min \{ 1+2s,2\}$. If $\usf_d \in H_0^1(\Omega)$, we have that $\bar \usf - \usf_d \in H_0^1(\Omega)$ and then a similar argument shows that the optimal adjoint state $\opsf \in \mathbb{H}^{\kappa}(\Omega)$. }
\end{proof}

In Section \ref{sec:fd} we will propose a numerical scheme to approximate the solution to \eqref{eq:J}--\eqref{eq:cc} and also provide an priori error analysis. The latter is based on H\"older--regularity results for the optimal control $\ozsf$. In order to obtain such results, we invoke the recent boundary regularity estimates for the solution $\usf$ to problem \eqref{eq:fractional}, derived by Caffarelli and Stinga in \cite{Pablo}, that assume some suitable smoothness properties on $\zsf$, the domain $\Omega$ and the matrix $A$. Since, in our setting, we have that $\zsf \in \Zad \subset L^{\infty}(\Omega)$, these results can be adapted to the following regularity estimates; see also \cite[Lemma 2.10]{CDDS:11} and \cite[Proposition 5.2]{Barrios20126133}.  

\begin{lemma}[boundary regularity for $\usf$]
\label{le:Pablo}
Let $\usf$ be the solution to \eqref{eq:fractional} with $\zsf \in L^{\infty}(\Omega)$. If $s \in (0,\tfrac{1}{2})$, $\Omega$ is a $C^1$ domain and $A \in C(\bar \Omega)$, then $\usf \in C^{0,2s}(\bar{\Omega})$, and 
\begin{equation}
\label{eq:bound_2s}
[\usf]_{C^{0,2s}(\bar \Omega)} \lesssim \| \usf \|_{\Hs} + \| \zsf \|_{L^{\infty}(\Omega)}.
\end{equation}
On the other hand, if $s \in (\tfrac{1}{2},1)$, $\Omega$ is a $C^{1,2s-1}$ domain and $A \in C^{0,2s-1}(\bar{\Omega})$, then $\usf \in C^{1,2s-1}(\bar{\Omega})$ and 
\[
[\usf]_{C^{1,2s-1}(\bar \Omega)} \lesssim \| \usf \|_{\Hs} + \| \zsf \|_{L^{\infty}(\Omega)}.
\]
\end{lemma}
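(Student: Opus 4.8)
The plan is to obtain the H\"older estimates for $\usf$ by reducing problem \eqref{eq:fractional} to the degenerate extension problem \eqref{eq:alpha_harm} and invoking the interior and boundary Schauder theory developed by Caffarelli and Stinga in \cite{Pablo}. First I would recall, via Theorem \ref{TH:CS} and the representation formula \eqref{eq:representation_formula}, that $\usf = \tr \ue$, where $\ue \in \HL(y^{\alpha},\C)$ solves the nonuniformly elliptic equation $-\DIV(y^{\alpha}\mathbf{A}\nabla\ue) = 0$ in $\C$ with $\ue = 0$ on $\partial_L\C$ and conormal datum $\partial_\nu^{\alpha}\ue = d_s\zsf$ on $\Omega\times\{0\}$. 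Since $\zsf \in L^{\infty}(\Omega)$, the Neumann datum is bounded, and the coefficient matrix $\mathbf{A} = \mathrm{diag}\{A(x'),1\}$ inherits the continuity (resp.\ $C^{0,2s-1}$) regularity of $A$. This is precisely the setting of the regularity results in \cite{Pablo}: for a Muckenhoupt $A_2$ weight $y^{\alpha}$, solutions to such problems with $L^{\infty}$ Neumann data enjoy $C^{0,2s}$ regularity up to the boundary $\Omega\times\{0\}$ when $s<1/2$, provided $\Omega$ is $C^{1}$ and $A$ is continuous, while for $s>1/2$ the gain is one derivative, yielding $C^{1,2s-1}$ regularity under the stronger hypotheses on $\Omega$ and $A$. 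Restricting these estimates to the trace $\Omega\times\{0\}$ gives the asserted interior-and-boundary H\"older regularity of $\usf$.

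For the quantitative bounds, I would track the dependence of the Schauder constants on the data. The $C^{0,2s}(\bar\Omega)$ (resp.\ $C^{1,2s-1}(\bar\Omega)$) seminorm of $\ue$ restricted to $y=0$ is controlled, by the results of \cite{Pablo}, by the $L^{\infty}$ norm of the Neumann datum $d_s\zsf$ together with a global energy quantity for $\ue$; the natural choice is $\|\nabla\ue\|_{L^2(y^{\alpha},\C)}$, which by \eqref{eq:estimate_s} and the trace identity \eqref{Trace_estimate} is comparable to $\|\usf\|_{\Hs}$. Substituting $\ue$'s trace for $\usf$ and absorbing the constant $d_s$ into the implicit constant yields \eqref{eq:bound_2s} and its $C^{1,2s-1}$ counterpart. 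I would also note explicitly that the case $s=1/2$ is excluded, since there the extension weight is $\alpha=0$ and the boundary regularity is only logarithmically short of Lipschitz; this is why the statement splits into $s\in(0,1/2)$ and $s\in(1/2,1)$.

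The main obstacle I anticipate is not the reduction itself but the careful citation and, if necessary, adaptation of the boundary regularity results in \cite{Pablo}, \cite[Lemma 2.10]{CDDS:11}, and \cite[Proposition 5.2]{Barrios20126133}: those references are typically stated for the fractional Laplacian or for specific classes of operators and geometries, and one must verify that (i) the variable-coefficient operator $\mathcal{L}$ with $A$ merely continuous (resp.\ $A\in C^{0,2s-1}$) falls within their scope, (ii) the Dirichlet condition $\usf=0$ on $\partial\Omega$ together with a $C^{1}$ (resp.\ $C^{1,2s-1}$) domain produces genuine up-to-the-boundary H\"older control rather than only interior estimates, and (iii) the right-hand side $\zsf\in L^\infty$, as opposed to the smoother data assumed in some versions, still yields the claimed exponents. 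Once the applicability of these cited estimates is confirmed, the proof is essentially a translation of the extension-problem regularity to the trace, and the energy bound \eqref{eq:estimate_s} supplies the remaining term on the right-hand side.
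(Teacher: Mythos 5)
Your proposal matches the paper's treatment: the paper gives no independent proof of this lemma, but simply invokes the Caffarelli--Stinga boundary Schauder estimates from \cite{Pablo} (together with \cite[Lemma 2.10]{CDDS:11} and \cite[Proposition 5.2]{Barrios20126133}), which are themselves obtained by realizing $\usf = \tr \ue$ and working on the degenerate extension problem, exactly as you outline. Your additional remarks on tracking the constants through \eqref{eq:estimate_s} and on the exclusion of $s=\tfrac12$ are consistent with the paper, which handles that critical case only via the conjecture \eqref{conjecture}.
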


When $s=\tfrac{1}{2}$, it is not completely evident how to adapt the techniques developed in \cite{Pablo} to derive regularity results for the solution $\usf$ under the assumption that $\zsf \in L^{\infty}(\Omega)$. For this reason, we conjecture the following regularity property: If $s = \tfrac{1}{2}$, $\Omega$ is a $C^1$ domain, $A \in C(\bar{\Omega})$ and $\zsf \in L^{\infty}(\Omega)$, then 
\begin{equation}
\label{conjecture}
\usf \in C^{0,\theta}(\bar \Omega)
\end{equation}
for every $\theta < 1$. We now derive the following regularity properties for the fractional optimal state $\ousf$ and the optimal adjoint state $\opsf$. \EO{To accomplish this task, we define}
\begin{equation}
 \EO{\Lambda(\ousf,\opsf,\usf_d)= \| \ousf \|_{\Hs} + \| \opsf \|_{\Hs}+\| \ozsf \|_{L^{\infty}(\Omega)} + \| \usf_d \|_{L^{\infty}(\Omega)}.}
\end{equation}

\begin{theorem}[boundary regularity of $\bar{\usf}$ and $\bar{\psf}$]
\label{th:reg_u_p}
Let $\usf_d \in L^{\infty}(\Omega)$. 
If $s \in (0,\tfrac{1}{2})$, $\Omega$ is a $C^1$ domain and $A \in C(\bar \Omega)$, then both $\ousf$ and $\opsf$ belong to $C^{0,2s}(\bar{\Omega})$. In addition,
\begin{equation}
\label{eq:reg_u_p_2s}
 [\ousf]_{C^{0,2s}(\bar \Omega)} + [\opsf]_{C^{0,2s}(\bar \Omega)}   \lesssim \Lambda(\ousf,\opsf,\usf_d).
\end{equation}
On the other hand, if $s \in (\tfrac{1}{2},1)$, $\Omega$ is a $C^{1,2s-1}$ domain and $A \in C^{0,2s-1}(\bar{\Omega})$, then both $\ousf$ and $\opsf$ belong to $C^{1,2s-1}(\bar{\Omega})$. In addition, 
\begin{equation}
\label{eq:reg_u_p_12s}
[\ousf]_{C^{1,2s-1}(\bar \Omega)} + [\opsf]_{C^{1,2s-1}(\bar \Omega)} \lesssim \Lambda(\ousf,\opsf,\usf_d).
\end{equation}
In both inequalities the hidden constant is independent of the optimal variables.
\end{theorem}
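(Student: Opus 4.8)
The plan is to establish the boundary H\"older regularity of $\ousf$ and $\opsf$ by combining the optimality system of Theorem~\ref{TH:fractional_control}, the projection formula \eqref{projection_formula}, and the boundary regularity estimates for the state equation recorded in Lemma~\ref{le:Pablo}, via a bootstrap argument. The key observation is that $\ozsf \in \Zad \subset L^{\infty}(\Omega)$ with $\|\ozsf\|_{L^{\infty}(\Omega)} \le \max\{|\asf|,|\bsf|\}$, so the right-hand sides of both state and adjoint equations are a priori bounded in $L^{\infty}(\Omega)$. I would argue separately according to the range of $s$.

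For the case $s \in (0,\tfrac12)$: since $\ousf$ solves $\mathcal{L}^s \ousf = \ozsf$ with $\ozsf \in L^{\infty}(\Omega)$, Lemma~\ref{le:Pablo} immediately yields $\ousf \in C^{0,2s}(\bar\Omega)$ with $[\ousf]_{C^{0,2s}(\bar\Omega)} \lesssim \|\ousf\|_{\Hs} + \|\ozsf\|_{L^{\infty}(\Omega)}$. For the adjoint state, $\opsf$ solves $\mathcal{L}^s \opsf = \ousf - \usf_d$; since $\ousf \in C^{0,2s}(\bar\Omega) \hookrightarrow L^{\infty}(\Omega)$ and $\usf_d \in L^{\infty}(\Omega)$ by hypothesis, the right-hand side lies in $L^{\infty}(\Omega)$, and a second application of Lemma~\ref{le:Pablo} gives $\opsf \in C^{0,2s}(\bar\Omega)$ with $[\opsf]_{C^{0,2s}(\bar\Omega)} \lesssim \|\opsf\|_{\Hs} + \|\ousf - \usf_d\|_{L^{\infty}(\Omega)} \lesssim \|\opsf\|_{\Hs} + \|\ousf\|_{L^{\infty}(\Omega)} + \|\usf_d\|_{L^{\infty}(\Omega)}$. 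Estimating $\|\ousf\|_{L^{\infty}(\Omega)}$ by $\|\ousf\|_{C^{0,2s}(\bar\Omega)} \lesssim \|\ousf\|_{\Hs} + \|\ozsf\|_{L^{\infty}(\Omega)}$ and collecting terms produces \eqref{eq:reg_u_p_2s} with the stated dependence on $\Lambda(\ousf,\opsf,\usf_d)$; the hidden constants depend only on $s$, $\Omega$, $A$ and not on the optimal variables.

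For the case $s \in (\tfrac12,1)$: the argument is structurally identical. The state equation and Lemma~\ref{le:Pablo} give $\ousf \in C^{1,2s-1}(\bar\Omega)$ directly from $\ozsf \in L^{\infty}(\Omega)$. Since $C^{1,2s-1}(\bar\Omega) \hookrightarrow L^{\infty}(\Omega)$, the adjoint right-hand side $\ousf - \usf_d \in L^{\infty}(\Omega)$, so a second application of Lemma~\ref{le:Pablo} yields $\opsf \in C^{1,2s-1}(\bar\Omega)$, and the seminorm bounds are combined exactly as above to obtain \eqref{eq:reg_u_p_12s}. In both cases the use of the projection formula \eqref{projection_formula} is not strictly needed for the stated conclusion—it is the mere $L^{\infty}$-boundedness of $\ozsf$, inherited from $\Zad$, that drives the estimates—though it is what would allow one to transfer regularity back to $\ozsf$ in subsequent arguments.

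The only genuinely delicate point is the borderline exponent $s = \tfrac12$, which is explicitly excluded from the statement: there Lemma~\ref{le:Pablo} offers no estimate, and the best one can assert is the conjectured $C^{0,\theta}(\bar\Omega)$ regularity in \eqref{conjecture} for every $\theta<1$. I would therefore not attempt to include $s=\tfrac12$ in this theorem, consistent with how it is phrased, and simply note that under \eqref{conjecture} the same two-step bootstrap would give $\ousf,\opsf \in C^{0,\theta}(\bar\Omega)$ for all $\theta<1$. Apart from that threshold, no obstacle arises: the proof is a short two-step bootstrap whose entire content is tracking the constants to land on $\Lambda(\ousf,\opsf,\usf_d)$.
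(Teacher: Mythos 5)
Your proposal is correct and follows the paper's own argument essentially verbatim: apply Lemma~\ref{le:Pablo} to the state equation using $\ozsf\in\Zad\subset L^{\infty}(\Omega)$, then observe $\ousf-\usf_d\in L^{\infty}(\Omega)$ and apply Lemma~\ref{le:Pablo} again to the adjoint equation, finally combining the two seminorm bounds to land on $\Lambda(\ousf,\opsf,\usf_d)$. Your added remarks on the role of the projection formula and the exclusion of $s=\tfrac12$ are consistent with the paper's treatment.
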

\begin{proof}
Let $s \in (0,\tfrac{1}{2})$. The results of Lemma \ref{le:Pablo} imply that $\ousf \in C^{0,2s}(\bar \Omega)$ and that \eqref{eq:bound_2s} holds. In view of the fact that $\ousf - \usf_d \in L^{\infty}(\Omega)$, we apply, again, the results of Lemma \ref{le:Pablo} and conclude that $\opsf \in C^{0,2s}(\Omega)$, and 
\begin{equation*}
[\psf]_{C^{0,2s}(\bar \Omega)} \lesssim \| \opsf \|_{\Hs} + \| \ousf \|_{L^{\infty}(\Omega)} + \| \ousf_d \|_{L^{\infty}(\Omega)}.
\end{equation*}
This estimate, in view of \eqref{eq:bound_2s}, allows us to derive \eqref{eq:reg_u_p_2s}. Analogous arguments can be applied to obtain \eqref{eq:reg_u_p_12s} for $s \in (\tfrac{1}{2},1)$.
\end{proof}

We now present the following improved regularity result for the optimal control $\ozsf$, which shows that, for $s \in (\tfrac{1}{2},1)$, and under some suitable assumptions, $\ozsf \in C^{0,1}(\bar \Omega)$.

\begin{theorem}[boundary regularity of $\ozsf$]
\label{th:reg_controlII}
Let $\usf_d \in L^{\infty}(\Omega)$. If $s \in (0,\tfrac{1}{2})$, $\Omega$ is a $C^1$ domain and $A \in C(\bar \Omega)$, then $\ozsf \in C^{0,2s}(\bar \Omega)$ and  
\begin{equation*}
[\ozsf]_{C^{0,2s}(\bar \Omega)} \lesssim \| \opsf \|_{\Hs} + \| \ousf \|_{L^{\infty}(\Omega)} + \| \usf_d  \|_{L^{\infty}(\Omega)}.
\end{equation*}
On the other hand, if $s \in (\tfrac{1}{2},1)$, $\Omega$ is a $C^{1,2s-1}$ domain and  $A \in C^{0,2s-1}(\bar{\Omega})$, then $\ozsf \in C^{0,1}(\bar{\Omega})$ and
\begin{equation*}
[\ozsf]_{C^{0,1}(\bar \Omega)} \lesssim \| \opsf \|_{\Hs} + \| \ousf \|_{L^{\infty}(\Omega)} + \| \usf_d  \|_{L^{\infty}(\Omega)}.
\end{equation*}
\end{theorem}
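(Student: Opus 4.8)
The plan is to derive everything from the projection formula \eqref{projection_formula}, which represents the optimal control as the pointwise composition
\[
\ozsf(x') = \textrm{proj}_{[\asf,\bsf]}\!\left(-\tfrac{1}{\vartheta}\,\opsf(x')\right), \qquad \textrm{proj}_{[\asf,\bsf]}(v) = \min\{\bsf,\max\{\asf,v\}\},
\]
combined with the H\"older regularity of the optimal adjoint state $\opsf$ furnished by Theorem~\ref{th:reg_u_p}. The elementary fact that drives the argument is that $\textrm{proj}_{[\asf,\bsf]}:\mathbb{R}\to\mathbb{R}$ is a contraction, i.e.\ $|\textrm{proj}_{[\asf,\bsf]}(v)-\textrm{proj}_{[\asf,\bsf]}(w)|\le|v-w|$ for all $v,w\in\mathbb{R}$; consequently composition with $\textrm{proj}_{[\asf,\bsf]}$ does not increase any H\"older seminorm.

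First I would treat the case $s\in(0,\tfrac12)$. By Theorem~\ref{th:reg_u_p} — more precisely, by the intermediate bound on $[\opsf]_{C^{0,2s}(\bar\Omega)}$ obtained in its proof — we have $\opsf\in C^{0,2s}(\bar\Omega)$ with $[\opsf]_{C^{0,2s}(\bar\Omega)}\lesssim\|\opsf\|_{\Hs}+\|\ousf\|_{L^\infty(\Omega)}+\|\usf_d\|_{L^\infty(\Omega)}$. Then, for arbitrary $x',\tilde x'\in\bar\Omega$, the contraction property gives
\[
|\ozsf(x')-\ozsf(\tilde x')| \le \tfrac{1}{\vartheta}\,|\opsf(x')-\opsf(\tilde x')| \le \tfrac{1}{\vartheta}\,[\opsf]_{C^{0,2s}(\bar\Omega)}\,|x'-\tilde x'|^{2s},
\]
so that $\ozsf\in C^{0,2s}(\bar\Omega)$ with the stated bound.

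Next I would treat the case $s\in(\tfrac12,1)$. Under the stronger hypotheses ($\Omega$ of class $C^{1,2s-1}$, $A\in C^{0,2s-1}(\bar\Omega)$), Theorem~\ref{th:reg_u_p} yields $\opsf\in C^{1,2s-1}(\bar\Omega)$. Since a bounded $C^{1,2s-1}$ domain is quasiconvex, the Euclidean and intrinsic distances on $\bar\Omega$ are comparable, and hence $C^{1,2s-1}(\bar\Omega)\hookrightarrow C^{0,1}(\bar\Omega)$; in particular $\opsf$ is Lipschitz continuous on $\bar\Omega$ with Lipschitz seminorm controlled by $\|\opsf\|_{\Hs}+\|\ousf\|_{L^\infty(\Omega)}+\|\usf_d\|_{L^\infty(\Omega)}$. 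Repeating the computation of the previous paragraph with the exponent $2s$ replaced by $1$ gives $\ozsf\in C^{0,1}(\bar\Omega)$ together with the corresponding estimate.

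The point requiring the most care — and the reason the statement cannot be improved — is the recognition that the contraction property is the only structural information about the nonlinearity that is available: although $\opsf$ is $C^{1,2s-1}$ when $s\in(\tfrac12,1)$, the map $\textrm{proj}_{[\asf,\bsf]}$ generically develops corners along the level sets $\{-\vartheta^{-1}\opsf=\asf\}$ and $\{-\vartheta^{-1}\opsf=\bsf\}$, so $\ozsf$ is in general no smoother than Lipschitz there. Thus $C^{0,1}$ is sharp, and no regularity theory for \eqref{eq:fractional} beyond what Theorem~\ref{th:reg_u_p} already provides is needed.
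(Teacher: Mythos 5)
Your argument is correct and is essentially the paper's own proof: the paper likewise deduces the result from Theorem \ref{th:reg_u_p} together with the observation that the projection $\textrm{proj}_{[\asf,\bsf]}$, being a contraction, maps $C^{0,\sigma}(\bar\Omega)$ into $C^{0,\sigma}(\bar\Omega)$ for $\sigma\in(0,1]$ (with the embedding $C^{1,2s-1}(\bar\Omega)\hookrightarrow C^{0,1}(\bar\Omega)$ used implicitly for $s\in(\tfrac12,1)$). Your additional remarks on the sharpness of $C^{0,1}$ are a nice bonus but not needed for the statement.
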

\begin{proof}
The desired regularity results follow from Theorem \ref{th:reg_u_p} in conjunction with the fact that the projection formula \eqref{projection_formula} maps continuously
$C^{0,\sigma}(\bar \Omega)$ into $C^{0,\sigma}(\bar \Omega)$ for $\sigma \in (0,1]$.
\end{proof}

We conclude this section with a conjecture regarding the regularity of the optimal control $\ozsf$ when $s = \tfrac{1}{2}$. If $\usf_d \in L^{\infty}(\Omega)$, $\Omega$ is a $C^1$ domain and $A \in C(\bar \Omega)$, then
\begin{equation}
 \label{eq:reg_c_3}
\ozsf \in C^{0,\theta}(\bar \Omega)
\end{equation}
for every $\theta < 1$. 


\section{The truncated optimal control problem}
\label{sec:truncated}
The state equation \eqref{eq:extension_weak} of the extended optimal control problem is posed on the semi--infinite domain $\C = \Omega \times (0,\infty)$. Therefore, it cannot be directly approximated with finite--element--like techniques. However, since the solution to \eqref{eq:extension_weak} decays exponentially in $y$ \cite[Proposition 3.1]{NOS}, by truncating $\C$ to $\C_\Y = \Omega \times (0,\Y)$, for a suitable truncation parameter $\Y \geq 1$, and setting a homogeneous Dirichlet condition on $y = \Y$, we only incur in an exponentially small error in terms of $\Y$ \cite[Lemma 4.6]{AO}.  We briefly review the results of \cite[Section 4]{AO}. To do this, we define
\[
  \HL(y^{\alpha},\C_\Y) := \left\{ v \in H^1(y^\alpha,\C_\Y): v = 0 \text{ on }
    \partial_L \C_\Y \cup \Omega \times \{ \Y\} \right\}.
\]
Then, the \emph{truncated optimal control problem} reads as follows: Find
$
 \text{min } J(\tr v,\rsf)
$
subject to the truncated state equation
\begin{equation}
\label{eq:truncated_state}
 a_\Y(v,\phi) = \langle \rsf, \tr \phi \rangle_{\Hsd \times \Hs}\quad \forall \phi \in \HL(y^{\alpha},\C_\Y)
\end{equation}
and the control constraints 
$
 \rsf \in \Zad.
$
The bilinear form $a_{\Y}$ is defined by
\begin{equation}
\label{eq:a_Y}
a_\Y(w,\phi) = \frac{1}{d_s} \int_{\C_\Y} {y^{\alpha}\mathbf{A}(x)} \nabla w \cdot \nabla \phi  \quad \forall w,\phi \in \HL(y^{\alpha},\C_\Y).
\end{equation}

We define the truncated control--to--state operator $\mathbf{H}: \Hsd \ni \rsf  \mapsto \tr v \in \Hs$, where $v = v(\rsf)$ denotes the unique solution to \eqref{eq:truncated_state}. The map $\mathbf{H}$ is linear and continuous; see \cite[Proposition 2.1]{CDDS:11}. With this operator at hand, we define the reduced cost functional $j: \Zad \ni \rsf \rightarrow j(\rsf) \in \mathbb{R}$ by
\begin{equation}
 \label{eq:red_J}
 j(\rsf) = J(\rsf, \mathbf{H} \rsf ),
\end{equation}
which is continuous and convex. In addition, the quadratic structure of $j$ implies that $j''(\qsf)(\rsf,\rsf)$ does not depend on $\qsf$ and is positive definite, that is
\begin{equation*}
 j''(\qsf)(\rsf,\rsf) \geq \vartheta \| \rsf\|^2_{L^2(\Omega)} \quad \forall \rsf \in L^2(\Omega).
\end{equation*}

The truncated adjoint state $p= p(\rsf) \in \HL(y^{\alpha},\C_{\Y})$, associated with the extended state $v=v(\rsf)$, is defined as the unique solution to
\begin{equation}
\label{eq:truncated_adjoint}
a_\Y(\phi,p) = (\tr v - \usfd, \tr \phi )_{L^2(\Omega)} \qquad \forall \phi \in \HL(y^{\alpha},\C_{\Y}).
\end{equation}
We then have the following result \cite[Theorem 4.5]{AO}.

\begin{theorem}[existence, uniqueness and optimality system]
\label{TH:truncated_control}
The truncated optimal control problem has a unique solution $(\bar{v}, \orsf)$. The optimality system
\begin{equation}
\begin{dcases}
 \bar{v} = \bar{v}(\orsf) \in \HL(y^{\alpha},\C_{\Y}) \textrm{ solution to } \eqref{eq:truncated_state}, \\
 \bar{p} = \bar{p}(\orsf) \in \HL(y^{\alpha},\C_{\Y}) \textrm{ solution to } \eqref{eq:truncated_adjoint}, \\
 \orsf \in \Zad, \quad (\tr \bar{p} + \vartheta \orsf , \rsf - \orsf )_{L^2(\Omega)} \geq 0 \quad \forall \rsf \in \Zad,
\end{dcases}
\label{op_truncated} 
\end{equation}
holds. These conditions are necessary and sufficient. 
\end{theorem}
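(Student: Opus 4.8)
The plan is to establish Theorem~\ref{TH:truncated_control} by casting the truncated optimal control problem as a strictly convex minimization problem over a nonempty, closed, bounded, and convex set, and then applying the standard machinery from the calculus of variations together with convex analysis. First I would verify that the truncated state equation \eqref{eq:truncated_state} is well posed: the bilinear form $a_\Y$ defined in \eqref{eq:a_Y} is bounded and coercive on $\HL(y^{\alpha},\C_\Y)$ — coercivity following from a weighted Poincar\'e inequality on the truncated cylinder analogous to \eqref{Poincare_ineq}, combined with the uniform ellipticity and boundedness of $\mathbf{A}$ — while the right-hand side $\rsf \mapsto \langle \rsf, \tr\phi\rangle_{\Hsd\times\Hs}$ is a bounded linear functional by virtue of the trace estimate \eqref{Trace_estimate}. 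Hence the Lax--Milgram Lemma yields the linear continuous control-to-state operator $\mathbf{H}$, and the reduced functional $j$ in \eqref{eq:red_J} is well defined.

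Next I would address existence and uniqueness of the optimal control $\orsf$. The set $\Zad$ is nonempty (since $\asf<\bsf$), convex, closed, and bounded in $L^2(\Omega)$, hence weakly sequentially compact. The reduced functional $j$ is continuous and convex — indeed strictly convex, since $\| \cdot\|^2_{L^2(\Omega)}$-term with weight $\vartheta>0$ makes $j''(\qsf)(\rsf,\rsf)\ge \vartheta\|\rsf\|^2_{L^2(\Omega)}>0$ for $\rsf\neq0$ — and therefore weakly lower semicontinuous. The direct method of the calculus of variations then gives existence of a minimizer, and strict convexity gives uniqueness. Defining $\bar v = \bar v(\orsf) = \mathbf{H}\orsf$ and $\bar p=\bar p(\orsf)$ via \eqref{eq:truncated_adjoint} completes the triple.

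For the optimality conditions, I would use the first-order necessary (and, by convexity, sufficient) condition $j'(\orsf)(\rsf-\orsf)\ge0$ for all $\rsf\in\Zad$. Differentiating $j$ and using the adjoint equation \eqref{eq:truncated_adjoint} to rewrite $(\mathbf{H}\orsf-\usfd,\mathbf{H}(\rsf-\orsf))_{L^2(\Omega)}$ as $a_\Y(v(\rsf-\orsf),\bar p)$, which by \eqref{eq:truncated_state} equals $\langle \rsf-\orsf,\tr\bar p\rangle_{\Hsd\times\Hs}=(\tr\bar p,\rsf-\orsf)_{L^2(\Omega)}$, one obtains precisely the variational inequality $(\tr\bar p + \vartheta\orsf,\rsf-\orsf)_{L^2(\Omega)}\ge0$ for all $\rsf\in\Zad$. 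The convexity of $j$ makes this condition both necessary and sufficient.

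The main obstacle — really the only nonroutine point — is the well-posedness of the adjoint problem \eqref{eq:truncated_adjoint} and the careful justification of the duality manipulation converting the $L^2(\Omega)$ inner product of reduced-cost derivatives into the $L^2(\Omega)$ pairing with $\tr\bar p$; this requires that $\tr\bar p\in\Hs\subset L^2(\Omega)$ (so that the pairing with $\rsf-\orsf\in L^2(\Omega)\subset\Hsd$ is consistent) and that the symmetry of $a_\Y$ is used correctly. Everything else is a direct transcription of the arguments already carried out on the untruncated cylinder in Theorem~\ref{TH:extended_control}, with $\C$ replaced by $\C_\Y$ and $a$ by $a_\Y$; I would simply remark that the proofs of \cite[Theorems~3.4 and 3.11]{AO} apply verbatim on the truncated domain. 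I would therefore present the proof compactly, emphasizing only the points where the bounded truncation interval changes the functional setting (chiefly the Poincar\'e inequality and trace estimate on $\C_\Y$), and refer to \cite[Theorem~4.5]{AO} for the remaining details.
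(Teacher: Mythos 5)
Your proof is correct and follows exactly the standard argument that the paper relies on: the paper itself gives no proof of Theorem~\ref{TH:truncated_control} but cites \cite[Theorem~4.5]{AO}, which proceeds precisely as you do (Lax--Milgram for the truncated state and adjoint equations, the direct method with weak sequential compactness of $\Zad$ and strict convexity of $j$ for existence and uniqueness, and the first-order condition rewritten via the adjoint to obtain the variational inequality). No gaps; your treatment of the truncated-cylinder Poincar\'e inequality and the trace pairing is the right level of care.
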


The following exponential approximation properties follow from \cite[Lemma 4.6]{AO}.

\begin{lemma}[exponential convergence]
\label{LE:exp_convergence}
If $(\oue,\ozsf) \in \HL(y^{\alpha},\C) \times \Hs$ and $(\bar v, \orsf) \in \HL(y^{\alpha},\C_{\Y}) \times \Hs$ solve the extended and truncated optimal control problems, respectively, then
\begin{align*}
  \| \bar \zsf - \bar \rsf \|_{L^2(\Omega)} & \lesssim e^{-\sqrt{\lambda_1} \Y/4} \left(\| \bar{\rsf} \|_{L^2(\Omega)} + \| \usf_d \|_{L^2(\Omega)} \right),\\
  \| \nabla \left( \bar{\ue}  - \bar{v}  \right) \|_{L^2(y^{\alpha},\C)} & \lesssim e^{-\sqrt{\lambda_1} \Y/4} \left(\| \bar{\rsf} \|_{L^2(\Omega)} + \| \usf_d \|_{L^2(\Omega)} \right),
\end{align*}
where $\lambda_1$ denotes the first eigenvalue of the operator $\mathcal{L}$.
\end{lemma}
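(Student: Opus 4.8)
The plan is to deduce the exponential estimates for the optimal control problem from the known exponential decay estimates for the \emph{linear} extension problem together with the variational inequalities characterizing the two optimal controls. First I would record the single-equation fact, essentially \cite[Proposition 3.1]{NOS} combined with \cite[Lemma 3.3]{NOS} (or \cite[Lemma 4.6]{AO}), that for any datum $\rsf \in L^2(\Omega)$, if $\mathscr U(\rsf) \in \HL(y^\alpha,\C)$ solves the extension problem \eqref{eq:extension_weak} with control $\rsf$ and $\mathscr V(\rsf) \in \HL(y^\alpha,\C_\Y)$ solves the truncated problem \eqref{eq:truncated_state} with the same control, then
\[
 \| \nabla (\mathscr U(\rsf) - \mathscr V(\rsf)) \|_{L^2(y^\alpha,\C)} \lesssim e^{-\sqrt{\lambda_1}\Y/4} \| \rsf \|_{\Hsd} \lesssim e^{-\sqrt{\lambda_1}\Y/4} \| \rsf \|_{L^2(\Omega)},
\]
where the truncated solution is extended by zero to all of $\C$; an identical estimate holds for the adjoint equations \eqref{eq:extension_adjoint} and \eqref{eq:truncated_adjoint} with datum $\tr\,\mathscr U(\rsf) - \usf_d$ (resp. $\tr\,\mathscr V(\rsf) - \usf_d$), up to an extra term controlled by $\|\usf_d\|_{L^2(\Omega)}$ plus the state error just bounded. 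Combined with the trace estimate \eqref{Trace_estimate}, this also gives $\|\tr(\oue(\rsf) - \bar v(\rsf))\|_{L^2(\Omega)} \lesssim e^{-\sqrt{\lambda_1}\Y/4}(\|\rsf\|_{L^2(\Omega)} + \|\usf_d\|_{L^2(\Omega)})$ and the analogous statement for the adjoint traces.

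\textbf{Main argument.} Next I would exploit the two variational inequalities. Testing the variational inequality in Theorem \ref{TH:extended_control} with $\zsf = \orsf$ and the one in Theorem \ref{TH:truncated_control} with $\rsf = \ozsf$, then adding, yields the standard bound
\[
 \vartheta \| \ozsf - \orsf \|_{L^2(\Omega)}^2 \le (\tr \ope(\orsf) - \tr \bar p(\orsf), \ozsf - \orsf)_{L^2(\Omega)} + (\tr \ope(\ozsf) - \tr \ope(\orsf), \ozsf - \orsf)_{L^2(\Omega)},
\]
where I have inserted and subtracted $\tr\,\ope$ evaluated at the two controls. The second inner product is $\le 0$ by the monotonicity of the control-to-adjoint-trace map (this is where the positive definiteness of $j''$, equivalently the coercivity of $a$ together with the sign structure, enters — the map $\rsf \mapsto -\tr \ope(\rsf)$ is monotone). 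For the first inner product, $\tr \ope(\orsf) - \tr \bar p(\orsf)$ is exactly the adjoint-trace error for the linear problems \emph{at the fixed control} $\orsf$, so by the previous paragraph it is $\lesssim e^{-\sqrt{\lambda_1}\Y/4}(\|\orsf\|_{L^2(\Omega)} + \|\usf_d\|_{L^2(\Omega)})$ in $L^2(\Omega)$; applying Cauchy--Schwarz and Young's inequality, then absorbing $\tfrac{\vartheta}{2}\|\ozsf - \orsf\|_{L^2(\Omega)}^2$, gives the first asserted estimate. The second asserted estimate, on $\|\nabla(\oue - \bar v)\|_{L^2(y^\alpha,\C)}$, then follows by writing $\oue - \bar v = (\oue(\ozsf) - \oue(\orsf)) + (\oue(\orsf) - \bar v(\orsf))$: the first difference is the linear extension solution with datum $\ozsf - \orsf$, hence bounded via \eqref{eq:estimate_s} by $\|\ozsf - \orsf\|_{L^2(\Omega)}$, which we just controlled; the second is the truncation error at fixed control $\orsf$, bounded by the first paragraph.

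\textbf{Anticipated obstacle.} I expect the only genuinely delicate point to be the bookkeeping in the adjoint estimate: the adjoint datum changes between the two problems not merely because of truncation but because $\tr \oue$ and $\tr \bar v$ differ, so one must be careful to split the adjoint error into a ``same datum, different domain'' piece (handled by the linear exponential-decay lemma) and a ``different datum, same operator'' piece (handled by well-posedness of the adjoint equation plus the already-established state-trace error), and to check that no term is estimated circularly. Everything else — the monotonicity sign, the Young's inequality absorption, passing from $\Hsd$ to $L^2(\Omega)$ norms on the controls via the continuous embedding $L^2(\Omega) \hookrightarrow \Hsd$ — is routine. Once this splitting is arranged cleanly, the constants are all of the claimed form $e^{-\sqrt{\lambda_1}\Y/4}$ times $(\|\orsf\|_{L^2(\Omega)} + \|\usf_d\|_{L^2(\Omega)})$, which completes the proof.
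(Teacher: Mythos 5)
Your argument is correct and is essentially the proof of the result the paper relies on: the paper does not prove this lemma itself but quotes it from \cite[Lemma 4.6]{AO}, and the proof there is exactly your combination of the two variational inequalities with the single--equation exponential truncation estimate (\cite[Theorem 3.5]{NOS}) together with the ``same datum, different domain'' / ``same domain, different datum'' splitting for the adjoint that you correctly identify as the delicate step. One sign needs fixing: adding the two variational inequalities (tested with $\zsf=\orsf$ and $\rsf=\ozsf$) produces inner products against $\orsf-\ozsf$, not $\ozsf-\orsf$; as you wrote it, the quadratic term $(\tr \ope(\ozsf)-\tr \ope(\orsf),\ozsf-\orsf)=\|\mathbf{G}(\ozsf-\orsf)\|^2_{L^2(\Omega)}\ge 0$ and cannot be discarded, whereas with the correct direction it equals $-\|\mathbf{G}(\ozsf-\orsf)\|^2_{L^2(\Omega)}\le 0$ (equivalently, it is $\rsf\mapsto\tr\ope(\rsf)$ that is monotone, not its negative) --- after this correction the rest of your argument, including the Cauchy--Schwarz/absorption step and the stability bound \eqref{eq:estimate_s} for the state estimate, goes through unchanged.
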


We conclude this section with the following regularity result for the truncated optimal control $\orsf$ and the truncated optimal adjoint state $\tr \bar{p}$.

\begin{proposition}[Sobolev--regularity of $\orsf$ and $\tr \bar{p}$] \rm
\label{pro:regularity_control_r_Sobolev}
\EO{Let $\orsf \in \Zad$ be the truncated optimal control, $\Omega$ be a convex domain and $A \in C^{0,1}(\bar \Omega)$. If $\asf < 0 < \bsf$ and $\usf_d \in \mathbb{H}^{1-s}(\Omega)$, then $\orsf \in H_0^{1}(\Omega)$. If, in addition, $\usf_d \in H_0^1(\Omega)$, then $\tr \bar{p} \in \mathbb{H}^{\kappa}(\Omega)$, where $\kappa = \min \{1+2s,2 \}$.} 
\end{proposition}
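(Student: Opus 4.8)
The plan is to mimic the bootstrap argument of Lemma~\ref{le:reg_control} (i.e., \cite[Lemma 3.5]{AO}), but carried out at the level of the \emph{truncated} optimality system \eqref{op_truncated} rather than the extended one. The crucial observation is that the variational inequality in \eqref{op_truncated} is equivalent to the projection formula $\orsf(x') = \textrm{proj}_{[\asf,\bsf]}(-\tfrac{1}{\vartheta}\tr\bar p(x'))$, exactly as in \eqref{projection_formula}, so the regularity of $\orsf$ is governed by the regularity of the truncated optimal adjoint trace $\tr\bar p$. Since $\bar p$ solves \eqref{eq:truncated_adjoint} with right-hand side $\tr\bar v - \usf_d$, and $\tr \bar v = \mathbf H\orsf$ solves the fractional problem $\mathcal L^s(\tr\bar v) = \orsf$ (because $\mathbf H$ realizes the truncated Dirichlet-to-Neumann map, modulo the exponentially small truncation discrepancy, which however does not affect membership in a Sobolev space once we argue at the level of the equation on $\C_\Y$), the argument reduces to a regularity shift for the fractional operator $\mathcal L^s$ on the convex domain $\Omega$.

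Concretely, I would proceed in four steps. \textbf{Step 1.} Start from $\orsf \in \Zad \subset L^\infty(\Omega) \subset L^2(\Omega) \subset \mathbb H^{-s}(\Omega)$ and note $\usf_d \in \mathbb H^{1-s}(\Omega) \subset L^2(\Omega)$; thus $\tr\bar v - \usf_d \in L^2(\Omega) \subset \mathbb H^{-s}(\Omega)$, and since $\mathcal L^s$ is a pseudodifferential operator of order $2s$ mapping $\mathbb H^{-s}(\Omega)$-type data to $\mathbb H^{s}(\Omega)$ on a convex domain with $A \in C^{0,1}(\bar\Omega)$ (this is precisely the elliptic-regularity ingredient invoked in Lemma~\ref{le:reg_state} via Grisvard), we get $\tr\bar p \in \mathbb H^{s}(\Omega)$. \textbf{Step 2.} Feed this back: $\orsf = \textrm{proj}_{[\asf,\bsf]}(-\tfrac1\vartheta\tr\bar p)$ with $\tr\bar p \in \mathbb H^s(\Omega)$; the assumption $\asf < 0 < \bsf$ guarantees the truncation operator preserves homogeneous boundary values, so (exactly as in \cite[Lemma 3.5]{AO}, using that $\textrm{proj}_{[\asf,\bsf]}$ is Lipschitz and, for $s \le 1/2$, that it maps $\mathbb H^s(\Omega)$ into itself, while for $s>1/2$ one iterates) one upgrades $\orsf$. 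Iterating Steps~1--2 the bootstrap terminates at $\orsf \in H_0^1(\Omega)$, the ceiling being imposed by the Lipschitz (not $C^{1,1}$) regularity of $\textrm{proj}_{[\asf,\bsf]}$ and of $A$. \textbf{Step 3.} With $\orsf \in H_0^1(\Omega)$ in hand, $\tr\bar v$ solves $\mathcal L^s(\tr\bar v) = \orsf$ with $\orsf \in H_0^1(\Omega) = \mathbb H^1(\Omega)$, so by the order-$2s$ shift and Theorem 3.2.1.2 in \cite{Grisvard} (convex $\Omega$), $\tr\bar v \in \mathbb H^{\kappa}(\Omega)$ with $\kappa = \min\{1+2s,2\}$. \textbf{Step 4.} If moreover $\usf_d \in H_0^1(\Omega)$, then $\tr\bar v - \usf_d \in H_0^1(\Omega) = \mathbb H^1(\Omega)$, and applying the same $2s$-shift to \eqref{eq:truncated_adjoint} gives $\tr\bar p \in \mathbb H^{\kappa}(\Omega)$, $\kappa = \min\{1+2s,2\}$. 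This is exactly the asserted conclusion.

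The main obstacle, and the one point needing care beyond a verbatim transcription of Lemma~\ref{le:reg_state}, is justifying that the \emph{truncated} control-to-state map $\mathbf H$ enjoys the same interior/Sobolev regularity shift as the fractional map $\mathbf S$. Unlike $\mathbf S$, the operator $\mathbf H$ does not literally invert $\mathcal L^s$; it corresponds to the Dirichlet-to-Neumann map of the problem on the bounded cylinder $\C_\Y$ with an artificial Dirichlet condition at $y=\Y$. The clean way around this is to work with the spectral representation \eqref{eq:representation_formula}--\eqref{psik}: writing $\tr\bar v = \sum_k \bar v_k \varphi_k$ one has $\bar v_k = c_k(\Y)\, \orsf_k$ where the multipliers $c_k(\Y)$ differ from $\lambda_k^{-s}$ only by exponentially-small-in-$\Y$ corrections and, crucially, satisfy $c_k(\Y) \sim \lambda_k^{-s}$ uniformly in $k$; hence $\sum_k \lambda_k^{\kappa} \bar v_k^2 \lesssim \sum_k \lambda_k^{\kappa-2s}\orsf_k^2 < \infty$ whenever $\orsf \in \mathbb H^{\kappa - 2s}(\Omega)$, which is exactly the shift we need. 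Once this uniform-multiplier estimate is recorded, Steps~1--4 go through mechanically, and the proposition follows; I would write the proof as "proceed as in the proof of Lemma~\ref{le:reg_control} and Lemma~\ref{le:reg_state}, replacing $\mathbf S$ by $\mathbf H$ and using the uniform spectral bound on the truncated multipliers."
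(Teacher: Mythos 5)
Your proposal is correct and follows essentially the same route as the paper: the paper likewise writes the truncated state via separation of variables with Bessel-function profiles $\chi_k$, identifies $\tr\bar v$ as the solution of a perturbed fractional equation $\mathfrak{L}^s \tr\bar v=\orsf$ whose spectral multipliers $\lambda_k^s(1-e_{k,s}d_s^{-1})$ differ from those of $\calLs$ only by exponentially decaying (hence uniformly controlled) corrections, and then runs the bootstrap of \cite[Lemma 3.5]{AO} and the argument of Lemma \ref{le:reg_state} verbatim. Your ``uniform spectral bound on the truncated multipliers'' is exactly the paper's observation that $\{e_{k,s}\}$ decays exponentially in $k$, so the two arguments coincide.
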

\begin{proof}
\EO{The techniques of \cite[Remark 25]{NOS3} allow us to transfer the regularity results of Lemmas \ref{le:reg_control} and \ref{le:reg_state} to $\orsf$ and $\tr \bar{p}$, respectively. To elucidate these results, we write, in view of separation of variables, a representation formula for the solution to \eqref{eq:truncated_state}: $v(x',y) = \sum_{k} v_k \varphi_k(x') \chi_k(y)$, where $\chi_k$ solves 
\begin{equation}
\label{chik}
\chi_k'' + \alpha y^{-1} \chi_k' - \lambda_k \chi_k = 0, \quad \chi_k(0) = 1, \quad \chi_k(\Y)= 0,
\end{equation}
and $\{ \varphi_k \}$ denote the eigenfunctions of the operator $\mathcal{L}$. If $I_s$ and $K_s$ denote the modified Bessel functions of first and second kind \cite[Section 9.6]{Abra}, then 
\[
  \chi_k(y) = \Big( \sqrt{\lambda_k} y \Big)^s \Big( a_{k,s} K_s( \sqrt{\lambda_k} y ) + b_{k,s} I_s( \sqrt{\lambda_k} y ) \Big). 
\] 
The arguments of \cite[Remark 25]{NOS3} thus reveal that $b_{k,s}= -c_sK_s(\sqrt{\lambda_k}\Y)I_s(\sqrt{\lambda_k}\Y)^{-1}$ and that $a_{k,s}=c_s = 2^{1-s}/\Gamma(s)$. Define $e_{k,s}=2^{1-s}b_{k,s}/\Gamma(s)$. Then, in view of the properties of the Bessel functions we conclude that $\{ e_{k,s} \}$ decays exponentially to $0$ as $k \uparrow \infty$, and in addition, that
\[
 -\lim_{y \downarrow 0} y^{\alpha} v_y(x',y) = \sum_{k=1}^{\infty} \lambda_k^s (d_s - e_{k,s}) v_k \varphi(x'),
\]
where $d_s= 2^{\alpha}\Gamma(s)/\Gamma(1-s)$. Since $\bar{v}$ solves problem \eqref{eq:truncated_state}, these arguments show that $\tr \bar{v}$ solves the following fractional PDE:
\begin{equation}
\label{eq:Ls_exp}
 \mathfrak{L}^s \tr \bar{v} = \orsf,
\end{equation}
where, for $w \in C_0^{\infty}(\Omega)$ and $s \in (0,1)$, the fractional operator $\mathfrak{L}^s$ is defined by
\[
 \mathfrak{L}^s w = \sum_{k=1}^{\infty} \lambda_k^s w_k (1 - e_{k,s}d_s^{-1});
\]
compare with \eqref{def:second_frac}. Since $\{ e_{k,s} \}$ decays exponentially to $0$ as $k \uparrow \infty$, in particular is uniformly bounded, and then, we can extend the operator $\mathfrak{L}^s$ to the space $\Hs$ defined by \eqref{def:Hs}. We can thus proceed, on the basis of \eqref{eq:Ls_exp}, as in the proof of Lemma 3.5 in \cite{AO} to derive that $\orsf \in H_0^1(\Omega)$. In view of this result, the arguments of Lemma \ref{le:reg_state} can be applied to derive that $\tr \bar{p} \in \mathbb{H}^{\kappa}(\Omega)$ for $\kappa = \min \{1+2s,s \}$. For brevity we skip the details.}
\end{proof}

\EO{The regularity properties derived by Caffarelli and Stinga in \cite{Pablo} for $\usf$, the solution to \eqref{eq:fractional}, are based on the the fact that $\usf = \tr \ue$ and that $\ue$ solves \eqref{eq:extension_weak}; see the proof of \cite[Theorem 6.1]{Pablo}. In fact, since these arguments rely on the sctructure of the operator involved in the extension problem and are local in the extended dimension, they can also be applied to derive regularity properties for $\tr v$, where $v$ solves \eqref{eq:truncated_state}. This, on the basis of Theorem \ref{th:reg_controlII}, provides the following H\"older regularity results for the truncated optimal control $\orsf$.}

\begin{proposition}[H\"older--regularity of $\orsf$] \rm
\label{pro:regularity_control_r_Holder}
\EO{Let $\orsf \in \Zad$ be the truncated optimal control. Let $\usf_d \in L^{\infty}(\Omega)$, $\Omega$ be a $C^1$ domain and $A \in C(\bar \Omega)$. In addition, if for $s \in (\tfrac{1}{2},1)$, we have that $\Omega$ is a $C^{1,2s-1}$ domain and $A \in C^{0,2s-1}(\bar \Omega)$, then  
\begin{equation}
\label{eq:reg_r}
 \orsf \in C^{0.2s}(\bar \Omega) \textrm{ for } s \in \left(0,\frac{1}{2}\right), \quad \textrm{and} \quad  \orsf \in C^{0,1}(\bar \Omega) \textrm{ for } s \in \left(\frac{1}{2},1\right).
\end{equation}}
\end{proposition}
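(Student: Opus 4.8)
The plan is to follow closely the proof of Theorem~\ref{th:reg_controlII}. By the optimality system \eqref{op_truncated}, the variational inequality $(\tr \bar{p}+\vartheta\orsf,\rsf-\orsf)_{L^2(\Omega)}\ge 0$ for all $\rsf\in\Zad$ is equivalent to the projection formula $\orsf=\textrm{proj}_{[\asf,\bsf]}\bigl(-\vartheta^{-1}\tr \bar{p}\bigr)$, and since $v\mapsto\textrm{proj}_{[\asf,\bsf]}(v)=\min\{\bsf,\max\{\asf,v\}\}$ is globally Lipschitz it maps $C^{0,\sigma}(\bar\Omega)$ continuously into itself for every $\sigma\in(0,1]$, exactly as used in the proof of Theorem~\ref{th:reg_controlII}. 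Consequently it suffices to prove that $\tr \bar{p}\in C^{0,2s}(\bar\Omega)$ when $s\in(0,\tfrac12)$ and $\tr \bar{p}\in C^{1,2s-1}(\bar\Omega)\subset C^{0,1}(\bar\Omega)$ when $s\in(\tfrac12,1)$. In turn $\bar p$ solves the truncated adjoint problem \eqref{eq:truncated_adjoint} with right-hand side $\tr \bar{v}-\usf_d$; thus, invoking first the corresponding statement for the state $\bar v$—which gives $\tr \bar{v}\in C^{0,2s}(\bar\Omega)$ (resp.\ $C^{1,2s-1}(\bar\Omega)$), hence $\tr \bar{v}\in L^\infty(\Omega)$—and using $\usf_d\in L^\infty(\Omega)$, it is enough to establish a truncated counterpart of Lemma~\ref{le:Pablo}: if $w\in\HL(y^{\alpha},\C_\Y)$ solves $a_\Y(w,\phi)=\langle\rsf,\tr\phi\rangle_{\Hsd\times\Hs}$ with $\rsf\in L^\infty(\Omega)$, then $\tr w\in C^{0,2s}(\bar\Omega)$ for $s\in(0,\tfrac12)$ and $\tr w\in C^{1,2s-1}(\bar\Omega)$ for $s\in(\tfrac12,1)$, under the stated hypotheses on $\Omega$ and $A$.

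To obtain this truncated counterpart I would argue as in the paragraph preceding the statement. The boundary regularity theory of Caffarelli and Stinga (the proof of \cite[Theorem 6.1]{Pablo}) only uses the extension equation in a neighbourhood of $\Omega\times\{0\}$: it combines interior Schauder-type estimates for $\DIV(y^{\alpha}\mathbf{A}\nabla\,\cdot\,)$, an even reflection across $\{y=0\}$, and a boundary analysis along $\partial_L\C$, so it is insensitive to the behaviour of the solution for $y$ bounded away from $0$. Since $\Y\ge 1$ and $\bar v$ solves the same equation as $\oue$ on $\Omega\times(0,1)$, a standard cutoff localizing to $\Omega\times[0,\tfrac12]$ shows that $\tr \bar{v}$ inherits, with the same dependence on the data, the H\"older bounds that Lemma~\ref{le:Pablo} provides for $\usf=\tr\oue$. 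Alternatively, one may use the separation-of-variables representation $\bar v(x',y)=\sum_k v_k\varphi_k(x')\chi_k(y)$ recalled in the proof of Proposition~\ref{pro:regularity_control_r_Sobolev}, which shows that $\tr \bar{v}$ solves $\mathfrak{L}^s\tr \bar{v}=\orsf$ with $\mathfrak{L}^s=\mathcal{L}^s-\mathcal{R}^s$, where $\mathcal{R}^s w=d_s^{-1}\sum_k\lambda_k^s e_{k,s}w_k\varphi_k$; since $\{e_{k,s}\}$ decays exponentially, $\mathcal{R}^s$ is infinitely smoothing, whence $\mathcal{L}^s\tr \bar{v}=\orsf+\mathcal{R}^s\tr \bar{v}\in L^\infty(\Omega)$ and Lemma~\ref{le:Pablo} applies directly to $\tr \bar{v}$. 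The same reasoning applied to \eqref{eq:truncated_adjoint} yields the claimed regularity of $\tr \bar{p}$, and the projection formula transfers it to $\orsf$, giving $\orsf\in C^{0,2s}(\bar\Omega)$ for $s\in(0,\tfrac12)$ and $\orsf\in C^{0,1}(\bar\Omega)$ for $s\in(\tfrac12,1)$.

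The main obstacle is precisely this transfer of the Caffarelli--Stinga estimates to the truncated cylinder $\C_\Y$: one has to verify that their argument is genuinely local near $\Omega\times\{0\}$, so that inserting the cutoff does not degrade the H\"older exponent; in the representation-formula variant, the corresponding point is to control $\mathcal{R}^s\tr \bar{v}$ in $L^\infty(\Omega)$, which needs only a crude polynomial bound on $\|\varphi_k\|_{L^\infty(\Omega)}$ together with the exponential decay of the $e_{k,s}$. Everything else—the reduction through the projection formula, the boundedness of $\tr \bar{v}$ on the compact set $\bar\Omega$, and the bookkeeping of constants—is routine and parallels Theorems~\ref{th:reg_u_p} and \ref{th:reg_controlII}.
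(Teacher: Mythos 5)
Your proposal is correct and follows essentially the same route as the paper: the paper's justification (given in the paragraph preceding the proposition) is precisely that the Caffarelli--Stinga arguments are local in the extended variable and hence apply to $\tr \bar{v}$ and $\tr \bar{p}$ on the truncated cylinder, after which the projection-formula argument of Theorem \ref{th:reg_controlII} transfers the H\"older regularity to $\orsf$. Your alternative variant via the representation formula and the exponentially smoothing remainder $\mathcal{R}^s$ is a nice additional justification that parallels the paper's own proof of Proposition \ref{pro:regularity_control_r_Sobolev}, but it is not a different approach in substance.
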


\EO{For $s = \tfrac{1}{2}$, we conjecture the following regularity result. If $\usf_d \in L^{\infty}(\Omega)$, $\Omega$ is a $C^1$ domain and $A \in C(\bar \Omega)$, then
\begin{equation}
 \label{eq:reg_c_r}
\orsf \in C^{0,\theta}(\bar \Omega)
\end{equation}
for every $\theta < 1$.}
\section{A finite element method for the state equation}
\label{sec:state_equation}
In the next section we will propose a fully discrete scheme to approximate the solution to the optimal control problem \eqref{eq:J}--\eqref{eq:cc}. The analysis relies, first, on the localization results of Section \ref{sec:control}, and second, \EO{on finite element approximation techniques for solving  \eqref{eq:truncated_state} on \emph{curved domains}; the latter being an extension of the priori error analysis developed in \cite{NOS}. We comment that such an analysis is not trivial, since involves anisotropic meshes in the extended dimension and the nonuniform weight $y^{\alpha}$ ($\alpha = 1-2s \in (-1,1)$), which degenerates $(s < 1/2)$ or blows up $(s > 1/2)$.}

It is instructive to review the results of \cite{NOS}, which assume that $\Omega$ is a convex polytopal subset of $\R^n$ ($n\geq1$) with boundary $\partial \Omega$. To do this, we start by recalling the 
regularity properties of $\ue$ and $v$, solutions to \eqref{eq:extension_weak} and \eqref{eq:truncated_state}, respectively.  The second order regularity of $\ue$ is much worse in the extended direction. In fact \cite[Theorem 2.7]{NOS} (see \cite[Remark 25]{NOS3} for $v$) yields
\begin{align}
    \label{reginx}
  \| \Delta_{x'} \ue\|_{L^2(y^{\alpha},\C)} + 
  \| \partial_y \nabla_{x'} \ue \|_{L^2(y^{\alpha},\C)}
  & \lesssim \| f \|_{\Ws}, \\
\label{reginy}
  \| \ue_{yy} \|_{L^2(y^{\beta},\C)} &\lesssim \| f \|_{L^2(\Omega)},
\end{align}
with $\beta > 2\alpha + 1$. These regularity estimates have important consequences in the design of efficient numerical techniques to solve \eqref{eq:truncated_state}; they suggest that \emph{graded} meshes in the extended $(n+1)$--dimension must be used. We recall the construction of the family of meshes $\{ \T_{\Y} \}$ over $\C_{\Y}$ used in \cite{AO,NOS3}. First, we consider a partition $\mathcal{I}_\Y$ of the interval $[0,\Y]$ with mesh points
\begin{equation}
\label{eq:graded_mesh}
y_k = k^\gamma M^{-\gamma} \Y, \quad k=0,\dots,M,
\end{equation}
where $\gamma > 3/(1-\alpha)=3/(2s)>1$. Second, we consider $\T_{\Omega} = \{K\}$ to be a conforming mesh of $\Omega$, where $K \subset \R^n$ is an element that is isoparametrically equivalent either to the unit cube $[0,1]^n$ or the unit simplex in $\R^n$. We denote by $\Tr_{\Omega}$ the collection of all conforming refinements of an original mesh $\T_{\Omega}^0$. We assume that $\Tr_{\Omega}$ is shape regular \cite{MR0520174}. We then construct a mesh $\T_{\Y}$ over $\C_{\Y}$ as the tensor product triangulation of $\T_{\Omega} \in \Tr_{\Omega}$ and $\mathcal{I}_{\Y}$. We denote by $\Tr$ the set of all the meshes obtained with this procedure, and recall that $\Tr$ satisfies the following weak shape regularity condition: If $T_1 = K_1 \times I_1$ and $T_2=K_2\times I_2 \in \T_\Y$ have nonempty intersection, then there exists a positive constant $\sigma_{\Y}$ such that
\begin{equation}
\label{eq:weak_shape_reg}
     h_{I_1} h_{I_2}^{-1} \leq \sigma_{\Y},
\end{equation}
where $h_I = |I|$. This weak shape regularity condition allows for anisotropy in the extended variable $y$ \cite{DL:05,NOS,NOS2}.

Given a mesh $\T_{\Omega} \in \Tr_\Omega$, we denote by $\N(\T_{\Omega})$ the set of all its interior nodes. We define $h_{\T_{\Omega}} = \max_{K \in \T_{\Omega}} h_K$. For $\T_{\Y} \in \Tr$, we define the finite element space 
\begin{equation}\
\label{eq:FESpace}
  \V(\T_\Y) = \left\{
            W \in C^0( \bar{\C}_\Y): W|_T \in \mathcal{P}_1(K) \otimes \mathbb{P}_1(I) \ \forall T \in \T_\Y, \
            W|_{\Gamma_D} = 0
          \right\},
\end{equation}
where $\Gamma_D = \partial_L \C_{\Y} \cup \Omega \times \{ \Y\}$ is the Dirichlet boundary. The set $\mathcal{P}_1(K)$ is $\mathbb{P}_1(K)$ -- the space of polynomials of total degree at most $1$ -- when the base $K$ of an element $T = K \times I$ is a simplex. If $K$ is a cube, $\mathcal{P}_1(K)$ stand for $\mathbb{Q}_1(K)$ -- the space of polynomials of degree not larger than $1$ in each variable. We also define the finite element space $\U(\T_{\Omega})=\tr \V(\T_{\Y})$. We assume that $\# \T_{\Omega} \approx M^n$. This, in view of the fact that $\#\T_{\Y} = M \, \# \T_{\Omega}$, implies that $\#\T_\Y \approx M^{n+1}$.

The Galerkin approximation of \eqref{eq:truncated_state} is the function $V \in \V(\T_{\Y})$ that satisfies
\begin{equation}
\label{eq:discrete_state}
  a_{\Y}(V,W) = (\rsf, \textrm{tr}_{\Omega} W )_{L^2(\Omega)}
  \quad \forall W \in \V(\T_{\Y}),
\end{equation}
where $a_{\Y}$ is defined in \eqref{eq:a_Y}.  We present the a priori error estimates derived in \cite[Theorem 5.4]{NOS} and \cite[Corollary 7.11]{NOS}.

\begin{theorem}[a priori error estimates]
\label{TH:fl_error_estimates}
Let $\T_\Y \in \Tr$ and $\V(\T_\Y)$ be defined by \eqref{eq:FESpace}. If $\ue(\rsf) \in \HL(y^{\alpha},\C)$ solves \eqref{eq:discrete_state} with $\zsf$ replaced by $\rsf$, then
\begin{equation}
\label{eq:fl_optimal_rate}
  \| \nabla( \ue(\rsf) - V) \|_{L^2(y^\alpha,\C)} \lesssim
|\log(\# \T_{\Y})|^s(\# \T_{\Y})^{-1/(n+1)} \|\rsf \|_{\mathbb{H}^{1-s}(\Omega)},
\end{equation}
where $\Y \approx \log(\# \T_{\Y})$. Alternatively, if $\usf(\rsf)$ denotes the solution to \eqref{eq:fractional} with forcing term $\rsf$, then
\begin{equation*}
\| \usf(\rsf) - \tr V \|_{\Hs} \lesssim
|\log(\# \T_{\Y})|^s(\# \T_{\Y})^{-1/(n+1)} \|\rsf \|_{\mathbb{H}^{1-s}(\Omega)}.
\end{equation*}
\end{theorem}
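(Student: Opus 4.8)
The plan is to decompose the error as
\[
\| \nabla(\ue - V) \|_{L^2(y^{\alpha},\C)} \le \| \nabla(\ue - v) \|_{L^2(y^{\alpha},\C)} + \| \nabla(v - V) \|_{L^2(y^{\alpha},\C_\Y)},
\]
where $v$ denotes the solution to the truncated problem \eqref{eq:truncated_state} with datum $\rsf$, extended by zero to $\C$, and $V$ is its Galerkin approximation \eqref{eq:discrete_state}. The first term is controlled by the exponential decay estimate of \cite[Proposition 3.1]{NOS} (see also \cite[Lemma 4.6]{AO}), which bounds it by $e^{-\sqrt{\lambda_1}\Y/4}\|\rsf\|_{\mathbb{H}^{1-s}(\Omega)}$; the choice $\Y \approx \log(\#\T_\Y)$ made in the statement renders this contribution of order $(\#\T_\Y)^{-1/(n+1)}\|\rsf\|_{\mathbb{H}^{1-s}(\Omega)}$, which is subsumed by the asserted rate.

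For the discrete term I would use Galerkin orthogonality between \eqref{eq:truncated_state} and \eqref{eq:discrete_state}, together with coercivity and boundedness of $a_\Y$ — equivalently, that $a_\Y(\cdot,\cdot)^{1/2}$ is a norm equivalent to the $H^1(y^{\alpha},\C_\Y)$ seminorm by the weighted Poincar\'e inequality \eqref{Poincare_ineq} — to reduce matters to a best--approximation estimate,
\[
\| \nabla(v - V) \|_{L^2(y^{\alpha},\C_\Y)} \lesssim \inf_{W \in \V(\T_\Y)} \| \nabla(v - W) \|_{L^2(y^{\alpha},\C_\Y)} \le \| \nabla(v - \Pi_{\T_\Y} v) \|_{L^2(y^{\alpha},\C_\Y)},
\]
where $\Pi_{\T_\Y}$ is an anisotropic quasi-interpolation operator of Dur\'an--Lombardi type, built as the tensor product of a Cl\'ement-type operator in the $x'$ variables and a suitable weighted one-dimensional interpolant in $y$, as in \cite{NOS}. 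On a generic cell $T = K \times I \in \T_\Y$ this operator satisfies local, anisotropic, weighted stability and error bounds in which the $x'$-direction contributes $h_K$ times $L^2(y^{\alpha})$-norms of $\nabla_{x'}^2 v$ and $\partial_y\nabla_{x'} v$ over a patch of $T$, while the $y$-direction contributes $h_I$ times the $L^2(y^{\alpha})$-norm of $v_{yy}$ over the patch.

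The remaining work — and the main obstacle — is to feed in the global regularity estimates \eqref{reginx}--\eqref{reginy}. The horizontal contributions are immediately controlled by $\| \Delta_{x'} \ue\|_{L^2(y^{\alpha},\C)} + \| \partial_y\nabla_{x'}\ue\|_{L^2(y^{\alpha},\C)} \lesssim \| \rsf \|_{\Ws}$, so summing over $K$ and using quasi-uniformity of $\T_\Omega$ ($h_{\T_\Omega} \approx M^{-1}$) produces a factor $M^{-1}$. The delicate point is the $v_{yy}$ term: estimate \eqref{reginy} only provides control with the heavier weight $y^{\beta}$, $\beta > 2\alpha+1$, so on the first layer of cells $I_1 = [0,y_1]$ one must absorb the weight discrepancy $y^{\alpha-\beta}$ using the explicit grading $y_k = (k/M)^{\gamma}\Y$ of \eqref{eq:graded_mesh}; this is precisely where the condition $\gamma > 3/(1-\alpha) = 3/(2s)$ enters, and where, after summation in $k$, the logarithmic factor $|\log(\#\T_\Y)|^{s}$ is generated, the power $s$ reflecting the truncation length $\Y \approx \log(\#\T_\Y)$. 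Collecting the two directional contributions and invoking $\#\T_\Y \approx M^{n+1}$ yields $\| \nabla(v-V)\|_{L^2(y^{\alpha},\C_\Y)} \lesssim |\log(\#\T_\Y)|^{s}(\#\T_\Y)^{-1/(n+1)}\|\rsf\|_{\mathbb{H}^{1-s}(\Omega)}$, and combining with the truncation estimate gives \eqref{eq:fl_optimal_rate}.

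Finally, the $\Hs$-estimate for the state follows from the trace inequality \eqref{Trace_estimate}: since $\usf(\rsf) = \tr\ue$ by Theorem \ref{TH:CS} and $\tr V \in \U(\T_\Omega)$, one has $\|\usf(\rsf) - \tr V\|_{\Hs} = \|\tr(\ue - V)\|_{\Hs} \le C_{\tr}\|\ue - V\|_{\HL(y^{\alpha},\C)}$, and the right-hand side is, by \eqref{Poincare_ineq}, equivalent to $\|\nabla(\ue - V)\|_{L^2(y^{\alpha},\C)}$ up to the exponentially small truncation mismatch; hence the first estimate implies the second.
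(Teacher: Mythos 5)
Your proposal is correct and follows essentially the same route as the proof the paper merely recalls from [NOS, Theorem 5.4 and Corollary 7.11]: split off the truncation error and kill it with the exponential decay and $\Y \approx \log(\#\T_\Y)$, reduce the discrete error to best approximation via Galerkin orthogonality and coercivity of $a_\Y$, apply anisotropic tensor--product interpolation estimates fed by the regularity bounds \eqref{reginx}--\eqref{reginy}, with the graded mesh \eqref{eq:graded_mesh} and the condition $\gamma>3/(1-\alpha)$ absorbing the weight mismatch in the first layer near $y=0$, and finally pass to the $\Hs$ estimate through the trace inequality \eqref{Trace_estimate}. The same decomposition and ingredients reappear almost verbatim in the paper's own adaptation to curved domains (Lemma \ref{le:curved_domains_1}), so nothing further is needed.
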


\begin{remark}[domain and data regularity]
\label{rm:dom_and_data2}
The results of Theorem~\ref{TH:fl_error_estimates} hold only if $\rsf \in \mathbb{H}^{1-s}(\Omega)$ and the domain $\Omega$ is sufficiently regular, for instance, convex.
\end{remark}

\subsection{A priori error analysis for fractional diffusion on curved domains}
\label{subsec:apriori_fractional}

In order to guarantee the regularity results of Theorem \ref{th:reg_controlII} and Proposition \ref{pro:regularity_control_r_Holder} we need the following smoothness assumptions on the domain $\Omega$ and the matrix $A$ that defines the operator $\mathcal{L}$ in \eqref{eq:L}:
\begin{enumerate}[(a)]
 \item \label{eq:assump_1} For $s \in (0,\tfrac{1}{2})$, $\Omega$ is a convex $C^1$ domain and $A \in C(\bar \Omega)$.
 \item \label{eq:assump_2} For $s \in (\tfrac{1}{2},1)$, $\Omega$ is a convex $C^{1,2s-1}$ domain and $A \in C^{0,2s-1}(\bar \Omega)$.
\end{enumerate}
For the critical case $s = 1/2$, we assume \eqref{eq:reg_c_r}, which in turns requires that $\Omega$ is a convex $C^1$ domain and $A \in C(\bar \Omega)$. Since we will be working on the basis of assumptions \eqref{eq:assump_1} and \eqref{eq:assump_2} we cannot consider the domain $\Omega$ to be a convex polytopal domain in $\mathbb{R}^n$ $(n \geq 1)$. Instead, we consider a family of open, bounded and convex polytopal domains $\{ \Omega_{\T} \} $, based on a family of shape regular triangulations $\{ \T_{\Omega} \}$, made of simplices, that approximate $\Omega$ in the following sense:  
\begin{equation}
\label{eq:Omega_T}
 \N(\T_{\Omega}) \subset \bar \Omega_{\T}, \quad \N(\T_{\Omega}) \cap \partial \Omega_{\T} \subset \partial \Omega, \quad
 |\Omega\setminus\Omega_\T| \lesssim h_{\T_{\Omega}}^2,
\end{equation}
where $\N(\T_{\Omega})$ denotes the set of all the nodes of the mesh $\T_{\Omega}$; we refer the reader to \cite{MR1115237} or \cite[Chapter 5.2]{MR773854} for details; see also \cite{MR1972729}. \EO{From now on, we assume $\Omega$ to be a convex $C^2$ domain; the convexity property implies that $\{ \Omega_\T \} \subset \Omega$.}

\begin{remark}[previous results and regularity of $\Omega$]
\EO{
 We remark that, since the error estimates derived in \cite{AO} are based on the $H^1$--regularity of the optimal control $\ozsf$, the previous construction of the sequence $\{ \Omega_{\T} \}$ is not needed in \cite{AO}: $\Omega$ can be taken as a convex polytopal domain in $\R^n$ (see Lemma \ref{le:reg_control}). In contrast, we will operate under the regularity results of Theorem \ref{th:reg_controlII} and Proposition \ref{pro:regularity_control_r_Holder} and therefore, in order to have the validity of such results and handle the curved domain, we assume $\Omega$ to be a convex $C^2$ domain.}
\end{remark}

On the basis of the previous construction, it is thus necessary to modify the definition of the finite element space $\V(\T_{\Y})$. For the sake of simplicity, we keep the notation and define
\begin{equation}
\label{eq:V_curved}
  \V(\T_\Y) = \left\{ W \in C^0( \bar{\C}_\T ): W|_T \in \mathbb{P}_1(K) \otimes \mathbb{P}_1(I) \, \forall T, \, W|_{ \bar{\Omega}\setminus \Omega_{\T} \times (0,\Y] } = 0 \right\},
\end{equation}
where $\C_{\T} = \Omega_{\T} \times (0,\Y)$, $T \in \T_{\Y}$ and the mesh $\T_{\Y}$ of $\C_\Y$ is constructed as the tensor product of $\T_{\Omega}$ and $\mathcal{I}_\Y$; the latter being defined in Section \ref{sec:state_equation}. The discrete state equation then reads: Find $V \in \V(\T_{\Y})$ such that
\begin{equation}
\label{fd_a_new}
a_{\T}(V,W) =  ( \rsf, \tr W )_{L^2(\Omega)} \quad \forall W \in \V(\T_{\Y}),
\end{equation}
where
\begin{equation}
\label{eq:a_T}
 a_{\T}(V,W) = \frac{1}{d_s} \int_{\C_{\T}} y^{\alpha} \mathbf{A}(x) \nabla V \cdot \nabla W.
\end{equation}

We now present an extension of the a priori error estimate \eqref{eq:fl_optimal_rate} of \cite[Theorem 5.4]{NOS}. In contrast to \eqref{eq:fl_optimal_rate} the derived estimate allows us to consider curved domains.
\begin{lemma}[energy--error estimate on curved domains]
\label{le:curved_domains_1}
\EO{Let $\ue(\rsf) \in \HL(y^{\alpha},\C)$ be the solution to \eqref{eq:extension_weak} with $\zsf$ replaced by $\rsf$, and let $V \in \mathbb{V}(\T_{\Y})$ be the solution to \eqref{fd_a_new}. If $\Omega$ is a convex $C^2$ domain, $A \in C^{0,1}(\bar \Omega)$, $\rsf \in L^{\infty}(\Omega)$, and $\Y \approx |\log (\# \T_{\Y})|$, then
\begin{equation}
 \label{eq:curved_1}
 \| \nabla (\ue(\rsf) - V)\|_{L^2(y^{\alpha},\C)} \lesssim | \log (\# \T_{\Y})|^s (\# \T_{\Y})^{-1/(n+1)},
\end{equation}
where the hidden constant is independent of $\ue(\rsf)$, $V$, $\rsf$ and $\T_{\Y}$.}
\end{lemma}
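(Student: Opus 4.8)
The plan is to mimic the argument leading to \eqref{eq:fl_optimal_rate} in \cite[Theorem 5.4]{NOS}, but carefully accounting for the geometric perturbation $\Omega \rightsquigarrow \Omega_{\T}$. First I would write the total error as a sum of three contributions: the truncation error $\nabla(\ue(\rsf) - v(\rsf))$ on $\C$ versus $\C_\Y$, which is exponentially small in $\Y$ by Lemma~\ref{LE:exp_convergence} (state equation version) and is thus absorbed into the logarithmic factor once $\Y \approx |\log(\#\T_\Y)|$; the \emph{consistency} or \emph{geometric} error coming from the fact that the discrete problem \eqref{fd_a_new} is posed on $\C_\T = \Omega_\T \times (0,\Y) \subset \C_\Y$ rather than on $\C_\Y$ itself; and the genuine \emph{interpolation} error $\nabla(v(\rsf) - \Pi v(\rsf))$ on $\C_\T$, where $\Pi$ is the quasi-interpolation operator of \cite{NOS} (built from tensor products of Scott--Zhang-type operators in $x'$ and a weighted averaging operator in $y$) adapted to the curved triangulation. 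The key point is that on $\C_\T$ the anisotropic interpolation estimates of \cite{NOS} apply verbatim, since they are local and only use the weak shape-regularity \eqref{eq:weak_shape_reg} and the regularity bounds \eqref{reginx}--\eqref{reginy}; the graded mesh \eqref{eq:graded_mesh} with $\gamma > 3/(2s)$ still balances the anisotropic terms so that the interpolation error is $\lesssim |\log(\#\T_\Y)|^s (\#\T_\Y)^{-1/(n+1)} \|\rsf\|_{\mathbb{H}^{1-s}(\Omega)}$, and $\rsf \in L^\infty(\Omega) \hookrightarrow \mathbb{H}^{1-s}(\Omega)$ on the bounded domain $\Omega$.

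Next I would handle the geometric error, which is the new ingredient. Using Galerkin quasi-optimality in the mesh-dependent energy norm $a_\T(\cdot,\cdot)^{1/2}$ — coercive and bounded on $\V(\T_\Y)$ uniformly in $\T$ because $\mathbf{A}$ is uniformly elliptic and bounded and $\C_\T \subset \C_\Y$ — one reduces $\|\nabla(v(\rsf) - V)\|_{L^2(y^\alpha,\C_\T)}$ to the best approximation of (the restriction to $\C_\T$ of) $v(\rsf)$ by functions in $\V(\T_\Y)$. Since $v(\rsf)$ is defined on $\C_\Y \supset \C_\T$, this is bounded by the interpolation error over $\C_\T$ plus a boundary-skin term: the functions in $\V(\T_\Y)$ vanish on $(\bar\Omega \setminus \Omega_\T) \times (0,\Y]$, whereas $v(\rsf)$ does not, so one must estimate $\|\nabla v(\rsf)\|_{L^2(y^\alpha, (\Omega\setminus\Omega_\T)\times(0,\Y))}$. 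Here I would invoke the boundary regularity of $\usf = \tr v$ from (the truncated analogue of) Theorem~\ref{th:reg_u_p} together with the representation formula for $v$ and the Caccioppoli-type / energy decay estimates, to show this skin contribution is controlled by $|\Omega \setminus \Omega_\T|^{1/2}$ times a fixed power, hence by $h_{\T_\Omega}$ (using the third relation in \eqref{eq:Omega_T}), which is $\lesssim (\#\T_\Y)^{-1/(n+1)}$ up to logarithms since $h_{\T_\Omega} \approx M^{-1}$ and $\#\T_\Y \approx M^{n+1}$. A further term arises because $a_\T$ integrates only over $\C_\T$: comparing $a_\Y(v,\phi)$ with $a_\T(v,\phi)$ for $\phi \in \V(\T_\Y)$ produces precisely an integral over $(\Omega\setminus\Omega_\T)\times(0,\Y)$, handled identically.

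Assembling, I would combine the exponential truncation bound, the anisotropic interpolation estimate, and the geometric skin estimate via a triangle inequality in $\|\nabla \cdot\|_{L^2(y^\alpha,\C)}$ (extending $V$ by zero outside $\C_\T$, which is legitimate since $V$ vanishes on $\partial_L\C_\T$ within $\C_\Y$ by construction in \eqref{eq:V_curved}), choose $\Y \approx |\log(\#\T_\Y)|$ to make the exponential term match, and arrive at \eqref{eq:curved_1}; the $\|\rsf\|_{L^\infty(\Omega)}$-dependence is absorbed into the hidden constant as stated. The main obstacle I anticipate is the geometric/skin estimate: one needs uniform (in $\T$) control of $\|\nabla v(\rsf)\|_{L^2(y^\alpha,(\Omega\setminus\Omega_\T)\times(0,\Y))}$ near the curved boundary, which is not immediate from the global energy bound \eqref{eq:estimate_s} alone and requires either the pointwise/Hölder boundary estimates for $\usf$ transplanted to the extension (as in the discussion preceding Proposition~\ref{pro:regularity_control_r_Holder}) or a localized weighted energy estimate exploiting that $v$ has zero lateral trace, combined with the measure bound $|\Omega\setminus\Omega_\T|\lesssim h_{\T_\Omega}^2$; making this quantitative while keeping the weight $y^\alpha$ under control is the technical heart of the proof.
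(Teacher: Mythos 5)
Your decomposition is the same as the paper's: truncation error (exponentially small in $\Y$), Galerkin/best--approximation error on $\C_{\T}$ handled by the anisotropic interpolation theory of \cite{NOS}, and a boundary--skin term on $(\Omega\setminus\Omega_{\T})\times(0,\Y)$ where $V$ vanishes but $v$ does not. However, the one genuinely new ingredient of this lemma --- the quantitative bound for $\|\nabla \ue(\rsf)\|_{L^2(y^{\alpha},(\Omega\setminus\Omega_{\T})\times(0,\Y))}$ --- is exactly the step you leave unproven: you flag it as ``the technical heart'' and offer two possible routes (transplanting H\"older boundary regularity of $\tr v$, or a Caccioppoli--type localized energy estimate) without executing either, and neither is obviously sufficient, since H\"older continuity of the trace does not by itself control the weighted gradient of the extension in a thin lateral strip. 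The paper closes this step concretely: under the hypothesis $\rsf\in L^{\infty}(\Omega)$ one has the pointwise estimates $|\nabla_{x'}\ue(\rsf)|\lesssim y^{-\alpha}$ and $|\partial_y \ue(\rsf)|\lesssim y^{-\alpha}$ (from \cite{MR3348118}), so that
\begin{equation*}
\int_0^{\Y}\!\!\int_{\Omega\setminus\Omega_{\T}} y^{\alpha}\,|\nabla \ue(\rsf)|^2 \lesssim |\Omega\setminus\Omega_{\T}|\int_0^{\Y} y^{-\alpha}\,\diff y \lesssim \Y^{1-\alpha}\, h_{\T_{\Omega}}^2 ,
\end{equation*}
which is $\lesssim (\#\T_{\Y})^{-2/(n+1)}$ up to the logarithmic factor. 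This is precisely where the $L^{\infty}$ assumption on $\rsf$ enters, and it is the piece your argument is missing.

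A second, smaller error: you assert $L^{\infty}(\Omega)\hookrightarrow \mathbb{H}^{1-s}(\Omega)$ to feed the interior interpolation estimate. This embedding is false for any positive Sobolev order (bounded functions need not have any fractional Sobolev regularity), so you cannot obtain the factor $\|\rsf\|_{\mathbb{H}^{1-s}(\Omega)}$ from $\rsf\in L^{\infty}(\Omega)$ alone. The paper simply invokes the interior estimate with $\|\rsf\|_{\mathbb{H}^{1-s}(\Omega)}$ on the right--hand side and treats the additional Sobolev regularity as part of the standing framework (it is made explicit in the subsequent Theorem on curved domains); in the applications $\rsf=\orsf$ lies in $H_0^1(\Omega)\cap L^{\infty}(\Omega)$, so both hypotheses are available, but the deduction as you state it does not stand on its own.
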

\begin{proof}
\EO{We start with an application of the triangle inequality and the exponential estimate of \cite[Theorem 3.5]{NOS} to deduce that
\begin{multline}
\label{eq:step_1_c} 
\| \nabla (\ue(\rsf) - V)\|_{L^2(y^{\alpha},\C)}  \leq   \| \nabla (\ue(\rsf) - v)\|_{L^2(y^{\alpha},\C)} 
\\ +   \| \nabla (v- V)\|_{L^2(y^{\alpha},\C_{\Y})}
  \lesssim e^{-\sqrt{\lambda_1} \Y / 4} \| \rsf \|_{\Hsd} + \| \nabla (v- V)\|_{L^2(y^{\alpha},\C_{\Y})},
\end{multline}
where $v$ corresponds to the the solution to \eqref{eq:truncated_state} and $\lambda_1$ denotes the first eigenvalue of the operator $\mathcal{L}$. It thus suffices to control the second term on the right hand side of the previous expression. To accomplish this task, we write
\begin{equation*}
 \|\nabla(v -V ) \|_{L^2(y^{\alpha},\C_{\Y})} =  \|\nabla(v -V ) \|_{L^2(y^{\alpha},\C_{\T} )}
 + \|\nabla(v -V ) \|_{L^2(y^{\alpha},\C_{\Y} \backslash \C_{\T} )}
\end{equation*}
and estimate each term separately. We begin with $\|\nabla(v -V(\rsf) ) \|_{L^2(y^{\alpha},\C_{\T} )}$, where, we recall that $\C_{\T} = \Omega_{\T} \times (0,\Y)$. We denote by $W_{e}$ the extension by zero of $W \in \V(\T_{\Y})$ to $\C$. Then, given $W \in \V(\T_{\Y})$, we invoke problem \eqref{fd_a_new} and problem \eqref{eq:truncated_state} with $\phi = W_{e}$ to arrive at the following Galerkin orthogonality property:
\[
 a_{\T}( v - V, W )  = 0 \quad \forall W \in \V(\T_{\Y}).
\]
This immediately yields
\[
 \| \nabla ( v - V ) \|_{L^2(y^{\alpha},\C_{\T})} = \inf_{W \in \V(\T_{\Y})} \| \nabla(v -W)\|_{L^2(y^{\alpha},\C_{\T})},
\]
which, in view of the piecewise polynomial interpolation results of \cite{NOS,NOS3}, and the regularity results of \cite[Theorem 2.7]{NOS}, implies the following quasi--optimal error estimate in terms of degrees of freedom:
\begin{equation}
\label{eq:step_2_c}
 \| \nabla (v - V ) \|_{L^2(y^{\alpha},\C_{\T})} \lesssim |\log N|^s N^{-1/(n+1)} \| \rsf \|_{\Ws};
\end{equation}
we refer the reader to the proof of \cite[Theorem 5.4]{NOS} for details and remark that, the results of \cite[Theorem 2.7]{NOS} are valid under the assumption $\rsf \in \Ws$.}
 
\EO{We now bound $\|\nabla(v -V(\rsf) ) \|_{L^2(y^{\alpha},\C_{\Y} \backslash \C_{\T} )}$: Since $V(\rsf) = 0$ on $ \bar{\Omega} \setminus \Omega_{\T} \times (0,\Y]$, an application of the exponential estimate \cite[Theorem 3.5]{NOS} implies that
\begin{multline}
\label{eq:step_3_c}
\|\nabla( v -V  ) \|_{L^2(y^{\alpha},\C_{\Y} \setminus \C_{\T} )}  = \|\nabla v  \|_{L^2(y^{\alpha},\C_{\Y} \setminus \C_{\T} )} 
\\
\lesssim e^{-\sqrt{\lambda_1}\Y/4 } \| \rsf \|_{\Hsd} + \| \nabla \ue(\orsf)\|_{L^2(y^{\alpha},\C_{\Y} \setminus \C_{\T})}.
\end{multline}
To control the remainder term, we use pointwise estimates for the harmonic extension $\ue(\rsf)$ that relies on the fact that $\rsf \in L^{\infty}(\Omega)$. These estimates are described, for instance, in \cite{MR3348118}: $| \nabla_{x'} \ue(\rsf) | \lesssim y^{-\alpha}$ for $(x',y) \in \C_{\Y}$ (\cite[inequality (6.1)]{MR3348118}) and  $| \partial_{y} \ue(\rsf) | \lesssim y^{-\alpha}$ for $(x',y) \in \C_{\Y}$ (\cite[inequality (6.2)]{MR3348118}). These estimates imply that}
\begin{multline}
\label{eq:inspection}
 \| \nabla \ue(\orsf)\|^2_{L^2(y^{\alpha},\C_{\Y} \setminus \C_{\T})} = \int_{0}^{\Y} \int_{\Omega\setminus \Omega_{\T}} y^{\alpha}| \nabla_{x'} \ue(\rsf) |^2 \diff x'\diff y \\
  + \int_{0}^{\Y} \int_{\Omega\setminus \Omega_{\T}} y^{\alpha} | \partial_{y} \ue(\rsf) |^2 \diff x'\diff y \lesssim \Y^{1-\alpha} |\Omega \setminus \Omega_{\T}|.
\end{multline}
\EO{Since $|\Omega \setminus \Omega_{\T}| \lesssim h_{\T_{\Omega}}^2$ and $ h_{\T_{\Omega}} \approx N^{-1/(n+1)}$, a collection of all the derived estimates allow us to conclude that
\begin{align*}
  \| \nabla (\ue(\rsf) - V)\|_{L^2(y^{\alpha},\C)} 
  \lesssim e^{-\sqrt{\lambda_1} \Y / 4} \| \rsf \|_{\Hsd} + |\log(\# \T_{\Y})|^s (\# \T_{\Y})^{-1/(n+1)}.
\end{align*}
This, in light of the fact that $\Y \approx |\log(\# \T_{\Y})|$, implies the desired estimate \eqref{eq:curved_1} and concludes the proof.}
\end{proof}

\begin{remark}[regularity of $\rsf$]
Examining the proof of Lemma \ref{le:curved_domains_1}, we realize that the critical step where the $L^{\infty}(\Omega)$--regularity of $\rsf$ is needed is \eqref{eq:inspection}. This assumption guarantees the pointwise estimates for $\ue(\rsf)$ used to control its energy on $\C_{\Y} \backslash \C_{\T}$.
\end{remark}

We present an improvement on Lemma \ref{le:curved_domains_1}: an error estimate that is quasi--optimal in terms of approximation and only requires the $\Ws$--regularity of $\rsf$. This improvement will allow us to derive an $L^2(\Omega)$--error estimate via a duality argument.

\begin{figure}[h!]
\centering
\includegraphics[width=0.35\textwidth]{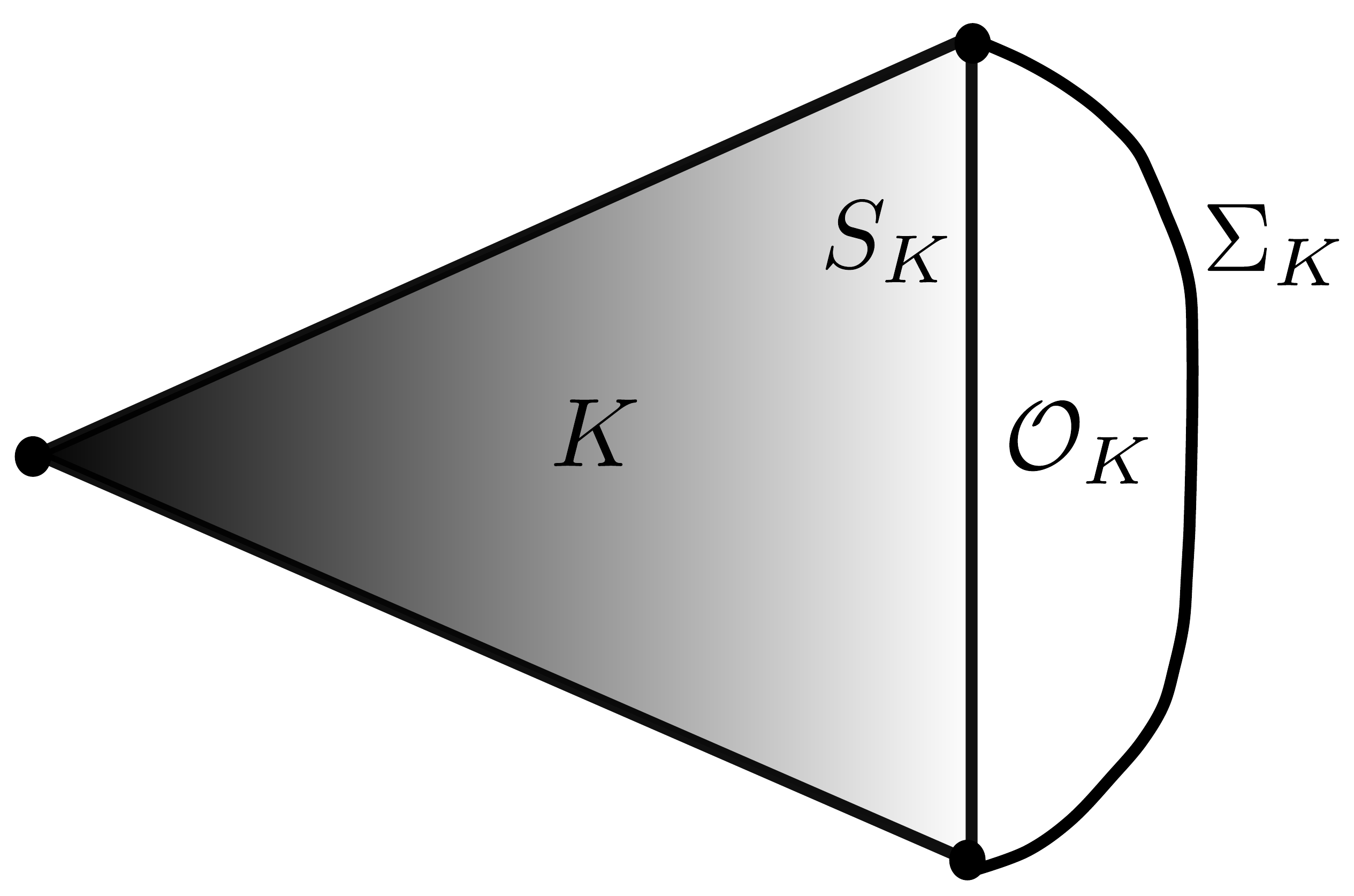} 
\caption{An element $K$, for $n=2$, such that $K \cap \partial \Omega_{\T} \neq \emptyset$ and the corresponding  curved region $\mathcal{O}_K$: $S_K$ denotes the side of $K$ that lies on $\partial \Omega_{\T}$, $\Sigma_K$ the corresponding arc formed by the curved boundary $\partial \Omega$ and $\mathcal{O}_K$ the region bounded by $\Sigma_K$ and $S_K$.}
\label{fig:curved}
\end{figure}

\begin{theorem}[energy--error estimate on curved domains]
\label{le:curved_domains_2}
\EO{If $\rsf \in \Ws$, then, under the framework of Lemma \ref{le:curved_domains_1}, we have that
\begin{equation}
 \label{eq:curved_2}
 \| \nabla (\ue(\rsf) - V )\|_{L^2(y^{\alpha},\C)} \lesssim | \log (\# \T_{\Y})|^s (\# \T_{\Y})^{-1/(n+1)} \| \rsf \|_{\Ws},
\end{equation}
where the hidden constant is independent of $\ue(\rsf)$, $V$, $\rsf$, and $\T_{\Y}$.}
\end{theorem}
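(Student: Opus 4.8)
The plan is to mimic the proof of Lemma~\ref{le:curved_domains_1} and to replace only the single step that used the $L^{\infty}(\Omega)$--regularity of $\rsf$ --- namely the control of $\|\nabla\ue(\rsf)\|_{L^2(y^{\alpha},\C_{\Y}\setminus\C_{\T})}$ through the pointwise bounds of \eqref{eq:step_3_c}--\eqref{eq:inspection} --- by an argument that relies solely on the weighted second--order regularity estimates \eqref{reginx}. As in Lemma~\ref{le:curved_domains_1}, I would first use the triangle inequality, the exponential decay estimate of \cite[Theorem 3.5]{NOS}, and the splitting $\C_{\Y}=\C_{\T}\cup(\C_{\Y}\setminus\C_{\T})$ to reduce \eqref{eq:curved_2} to two contributions. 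The contribution on $\C_{\T}$ is treated exactly as in \eqref{eq:step_2_c}: Galerkin orthogonality for $a_{\T}$, the anisotropic interpolation estimates of \cite{NOS,NOS3} and the regularity estimates of \cite[Theorem 2.7]{NOS} already give $\|\nabla(v-V)\|_{L^2(y^{\alpha},\C_{\T})}\lesssim |\log N|^{s}N^{-1/(n+1)}\|\rsf\|_{\Ws}$ under the mere assumption $\rsf\in\Ws$. Since $V$ vanishes on $(\bar\Omega\setminus\Omega_{\T})\times(0,\Y]$, the remaining contribution reduces, up to the exponentially small term $e^{-\sqrt{\lambda_1}\Y/4}\|\rsf\|_{\Hsd}$, to $\|\nabla\ue(\rsf)\|_{L^2(y^{\alpha},(\Omega\setminus\Omega_{\T})\times(0,\Y))}$.

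The new ingredient is an estimate of this last quantity by $h_{\T_{\Omega}}\|\rsf\|_{\Ws}$ alone, without any pointwise information on $\ue$. Here I would exploit the geometry encoded in \eqref{eq:Omega_T}: since $\Omega$ is of class $C^{2}$ and the boundary vertices of $\Omega_{\T}$ lie on $\partial\Omega$, every curved sliver $\mathcal{O}_K$ (bounded by the chord $S_K\subset\partial\Omega_{\T}$ and the arc $\Sigma_K\subset\partial\Omega$; see Figure~\ref{fig:curved}) has width $\lesssim h_K^{2}$, so that $\Omega\setminus\Omega_{\T}$ is contained in a boundary collar $\mathcal{B}_{\rho}=\{x'\in\Omega:\textrm{dist}(x',\partial\Omega)<\rho\}$ with $\rho\lesssim h_{\T_{\Omega}}^{2}\approx N^{-2/(n+1)}$. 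For a.e.\ fixed $y$ I would estimate the slice energy of $\ue(\rsf)$ over $\mathcal{B}_{\rho}$ in two pieces. Since $\ue=0$ on $\partial_{L}\C$, differentiating in $y$ gives $\partial_{y}\ue(\cdot,y)=0$ on $\partial\Omega$, whence a Poincar\'e inequality across the collar yields $\|\partial_{y}\ue(\cdot,y)\|_{L^{2}(\mathcal{B}_{\rho})}\lesssim\rho\,\|\nabla_{x'}\partial_{y}\ue(\cdot,y)\|_{L^{2}(\Omega)}$. For the $x'$--gradient, whose normal trace on $\partial\Omega$ does not vanish, I would use that $\ue(\cdot,y)\in H^{2}(\Omega)$ --- a consequence of \eqref{reginx} together with $H^{2}$ elliptic regularity on the convex domain $\Omega$ (valid because $A\in C^{0,1}(\bar\Omega)$), which gives $\|D^{2}_{x'}\ue\|_{L^{2}(y^{\alpha},\C)}\lesssim\|\rsf\|_{\Ws}$ --- and combine the trace inequality $\|\nabla_{x'}\ue(\cdot,y)\|_{L^{2}(\partial\Omega)}\lesssim\|\ue(\cdot,y)\|_{H^{2}(\Omega)}$ with a fundamental--theorem--of--calculus argument along inward normals to get $\|\nabla_{x'}\ue(\cdot,y)\|_{L^{2}(\mathcal{B}_{\rho})}^{2}\lesssim\rho\,\|\ue(\cdot,y)\|_{H^{2}(\Omega)}^{2}$. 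Multiplying by $y^{\alpha}$, integrating over $(0,\Y)$, and invoking \eqref{reginx}, \eqref{Poincare_ineq}, \eqref{eq:estimate_s} (to absorb lower--order terms) and the embeddings $\Ws\hookrightarrow L^{2}(\Omega)\hookrightarrow\Hsd$, I arrive at $\|\nabla\ue(\rsf)\|_{L^{2}(y^{\alpha},(\Omega\setminus\Omega_{\T})\times(0,\Y))}\lesssim h_{\T_{\Omega}}\|\rsf\|_{\Ws}\approx N^{-1/(n+1)}\|\rsf\|_{\Ws}$.

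Collecting the two contributions and choosing $\Y\approx|\log N|$ large enough that $e^{-\sqrt{\lambda_1}\Y/4}\|\rsf\|_{\Hsd}\lesssim N^{-1/(n+1)}\|\rsf\|_{\Ws}$ then delivers \eqref{eq:curved_2}. I expect the boundary--layer estimate of the second paragraph to be the crux, and three points deserve care. First, making rigorous --- via \cite{MR1115237} or \cite[Chapter 5.2]{MR773854} --- the geometric claim that $C^{2}$--regularity forces the slivers to have $O(h_K^{2})$ width, hence $\Omega\setminus\Omega_{\T}\subset\mathcal{B}_{\rho}$ with $\rho\lesssim h_{\T_{\Omega}}^{2}$: it is precisely this quadratic width, entering the $L^{2}$--energy through a square root, that upgrades the $O(N^{-2/(n+1)})$ \emph{measure} bound for $\Omega\setminus\Omega_{\T}$ into the optimal \emph{rate} $O(N^{-1/(n+1)})$. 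Second, the treatment of the non--vanishing normal component of $\nabla_{x'}\ue$ on $\partial\Omega$, which is what forces the use of the $H^{2}(\Omega)$--regularity of the slices and therefore of convexity and $A\in C^{0,1}(\bar\Omega)$. Third, verifying that the pointwise--in--$y$ slice estimates, once weighted by the Muckenhoupt factor $y^{\alpha}$ and integrated, reproduce exactly the global weighted bounds \eqref{reginx} --- in particular near $y=0$, where $\|\ue(\cdot,y)\|_{H^{2}(\Omega)}$ may blow up but is tamed by the integrable weight.
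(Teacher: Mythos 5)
Your proposal is correct and follows the paper's architecture almost exactly up to the last step: the same reduction via the triangle inequality, the exponential tail, Galerkin orthogonality on $\C_{\T}$ with the interpolation theory of \cite{NOS,NOS3}, and then a boundary--layer estimate of $\|\nabla \ue(\rsf)\|_{L^2(y^{\alpha},\C_{\Y}\setminus\C_{\T})}$ that trades the $O(h_{\T_{\Omega}}^2)$ width of the slivers for a factor $h_{\T_{\Omega}}$ in the $L^2$--energy. For the tangential derivatives $\partial_{x_i}\ue$, $i=1,\dots,n$, your collar/fundamental--theorem--of--calculus argument combined with a trace inequality and the weighted second--order regularity \eqref{reginx} is precisely the paper's sliver--by--sliver computation in \eqref{eq:step_4_c}--\eqref{eq:step_5_c} (the paper phrases it in local coordinates $(\zeta,\mu)$ on each $\mathcal{O}_K$ and performs the trace inequality directly in the weighted space over $\partial\Omega\times I$, but the content is the same, including the reliance on convexity and $A\in C^{0,1}(\bar\Omega)$ to pass from $\Delta_{x'}\ue$ to full second derivatives). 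Where you genuinely diverge is the extended derivative $\partial_y\ue$: the paper controls its boundary term through the explicit representation \eqref{eq:representation_formula} and the asymptotics $\tfrac{\diff}{\diff y}\psi_k(y)\approx y^{-\alpha}\lambda_k^{s/2}$ of the modified Bessel functions as $y\downarrow 0$, whereas you observe that $\partial_y\ue(\cdot,y)$ vanishes on $\partial\Omega$ (since $\ue=0$ on all of $\partial_L\C$) and apply a Poincar\'e inequality across the collar of width $\rho\lesssim h_{\T_{\Omega}}^2$, closing the estimate with the bound $\|\partial_y\nabla_{x'}\ue\|_{L^2(y^{\alpha},\C)}\lesssim\|\rsf\|_{\Ws}$ from \eqref{reginx}. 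Your route is more elementary and avoids the special--function asymptotics entirely (it even yields the stronger factor $h_{\T_{\Omega}}^2$ for this term); the paper's route, by contrast, does not need $\partial_y\ue(\cdot,y)$ to lie in $H^1$ of the collar slice by slice and is more in the spirit of the spectral machinery already set up in Section \ref{sec:truncated}. Both are valid; the three caveats you flag (rigorous $O(h_K^2)$ sliver width from $C^2$--regularity, the non--vanishing normal trace of $\nabla_{x'}\ue$ forcing $H^2$ slice regularity, and the integrability of the weighted slice norms near $y=0$) are exactly the points the paper's proof also has to address.
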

\begin{proof}
\EO{In view of the estimates \eqref{eq:step_1_c}--\eqref{eq:step_3_c}, we conclude that it suffices to bound the $L^2$--weighted norm of $\nabla \ue(\rsf)$ on $\C_{\Y} \backslash  \C_{\T}$. To accomplish this task, we proceed by a density argument that is inspired in the techniques developed in the proof of Lemma 5.2.3 in \cite{MR773854}. We present a proof for $n=2$. Let $K \in \Omega_{\T}$ be a cell such that $K \cap \partial \Omega_{\T} \neq \emptyset$. We denote by $S_{K}$ the side of $K$ that lies on the boundary $\partial \Omega_{\T}$ and define $\mathcal{O}_K$ to be the domain bounded by the side $S_K$ and the arc $\Sigma_K$; see Figure \ref{fig:curved}. We choose local coordinates $(\zeta, \mu)$ such that $\zeta$ is defined along the side $S_{K}$ and $\mu$ is orthogonal to $S_{K}$. We denote by $\eta = \varphi(\zeta)$ the equation of the arc $\Sigma_K$. With this notation at hand, and for a smooth function $\phi$, we thus write the following relation
\[
 \phi(\zeta, \mu , y ) =  \phi(\zeta, \varphi(\zeta), y) + \int_{\varphi(\zeta)}^{\mu} \phi_{\eta} (\zeta, t, y) \diff t,
\]
where $y \in I$, with $I$ denoting an interval of the partition $I_{\Y}$ defined in Section \ref{sec:state_equation}, and $(\zeta,\mu ) \in \mathcal{O}_K$. Consequently, integrating on $I$ and applying the Cauchy--Schwarz inequality, we arrive at
\begin{align*}
\label{eq:integration_y}
 \int_{I} y^{\alpha} \phi^2( \zeta, \mu, y) \diff y & \lesssim \int_{I} y^{\alpha} \phi^2(\zeta, \varphi(\zeta), y) \diff y + h_{\T_{\Omega}}^2 \int_{I}  \int_{\varphi(\zeta)}^{\mu}  y^{\alpha}\phi^2_{\eta} (\zeta, t , y) \diff t \diff y.
\end{align*}
To obtain the previous estimate we have used the $C^2$--regularity of the domain $\Omega$ to conclude that $\textrm{dist}(x,\partial \Omega) \lesssim h^2_{\T_{\Omega}}$ for all $x \in \partial \Omega_{\T}$ and then that $|\varphi(\zeta) - \mu | \lesssim h_{\T_{\Omega}}^2$. Integrating, now, on $\mathcal{O}_K$, we obtain that
\begin{equation}
\label{eq:step_4_c}
\| \phi \|^2_{L^2(y^{\alpha},\mathcal{O}_K \times I)} \lesssim h_{\T_{\Omega}}^2  \| \phi \|^2_{L^2(y^{\alpha},( \partial \mathcal{O}_K \cap \partial \Omega ) \times I)} 
+ h_{\T_{\Omega}}^4 \| \nabla_{x'} \phi \|^2_{L^2(y^{\alpha},\mathcal{O}_K \times I)},
\end{equation}
where we have used, again, that $|\varphi(\zeta) - \mu | \lesssim h_{\T_{\Omega}}^2$. Then, on the basis of a density argument, we obtain, for $i=1,\cdots,n+1$, that}
\begin{multline}
\label{eq:step_5_c}
\| \partial_{x_i} \ue(\rsf) \|^2_{L^2(y^{\alpha},\mathcal{O}_K \times I)} \lesssim h_{\T_{\Omega}}^2  \| \partial_{x_i} \ue(\rsf) \|^2_{L^2(y^{\alpha},(\partial \mathcal{O}_K \cap \partial \Omega ) \times I)} 
\\
+ h_{\T_{\Omega}}^4 \| \nabla_{x'} \partial_{x_i} \ue(\rsf) \|^2_{L^2(y^{\alpha},\mathcal{O}_K \times I)}.
\end{multline}

\EO{We control $\|\partial_{x_i} \ue(\rsf) \|_{L^2(y^{\alpha},\partial \Omega  \times I)}$ for $i=1,\dots,n$. To accomplish this task, we use that $H^1(y^{\alpha},\C_{\Y})$ can be equivalently defined as the set of measurable functions $w$ such that $w \in H^1(\Omega \times (s,t))$ for all $0< s< t < \Y$ and for which the seminorm $\| \nabla w \|_{L^2(y^{\alpha},\C_{\Y})}$ is finite. Thus, if $w \in H^1(y^{\alpha},\C_{\Y})$, an application of a standard trace inequality over a dyadic partition that covers the interval $I$ implies
\[
 \int_{\partial \Omega \times I } y^{\alpha} w^2 \lesssim \int_{\Omega \times I} y^{\alpha} | \nabla w|^2.
\]}

\EO{Summing up \eqref{eq:step_5_c} over all the cells $K$ such that $K \cap \partial \Omega_{\T} \neq \emptyset$ and over all the cells $I \in \mathcal{I}_{\Y}$, using the previous estimate and applying the regularity estimate \eqref{reginx} (see  \cite[Theorem 2.7]{NOS}) we obtain, for $i=1,\dots, n$, the estimate
\begin{multline}
\label{eq:step_6_c}
\| \partial_{x_i} \ue(\rsf) \|^2_{L^2(y^{\alpha}, \C_{\Y}\backslash \C_{\T} )} \lesssim h_{\T_{\Omega}}^2  \| \partial_{x_i} \nabla \ue(\rsf) \|^2_{L^2(y^{\alpha}, \C_{\Y} )} 
\\
+ h_{\T_{\Omega}}^4 \| \nabla_{x'} \partial_{x_i} \ue(\rsf) \|^2_{L^2(y^{\alpha},\C_{\Y}\backslash \C_{\T})}
\lesssim h_{\T_{\Omega}}^2 \| \rsf \|^2_{\Ws} \lesssim (\# \T_{\Y})^{-\frac{2}{n+1}}\| \rsf \|_{\Ws}.
\end{multline}
In the previous inequality we have used that $h_{\T_{\Omega}} \approx (\# \T_{\Omega})^{-1/n}$ and that $\# \T_{\Y} \approx \# \T_{\Omega} \cdot M \approx M^{n+1}$, where $M$ denotes the degrees of freedom of the mesh $\mathcal{I}_{\Y}$.}

\EO{The estimate of the term $\|\partial_{x_i} \ue(\rsf) \|_{L^2(y^{\alpha},(\partial \mathcal{O}_K \cap \partial \Omega ) \times I)}$ for $i=n+1$ in \eqref{eq:step_5_c}, follows from the formula \eqref{eq:representation_formula} and the asymptotic estimates for the modified Bessel function of the second kind $K_s$. In fact, \cite[equations (2.28)--(2.29)]{NOS} imply that 
\[
  \frac{\diff }{\diff y }\psi_k(y) = - c_s \sqrt{\lambda_{k}}\left(\sqrt{\lambda_k} y \right)^s K_{s-1}\left(\sqrt{\lambda_k} y \right), \quad \frac{\diff }{\diff y }\psi_k(y) \approx y^{-\alpha} \lambda_k^{s/2} \textrm{ as } y \downarrow 0^{+}.
\]
For brevity we leave the details to the reader. In view of this obtained estimate and \eqref{eq:step_6_c}, we conclude, for $i =1,\dots, n+1 $, that
\[
\| \partial_{x_i} \ue(\rsf) \|^2_{L^2(y^{\alpha}, \C\backslash \C_{\T} )} 
\lesssim h_{\T_{\Omega}}^2 \| \rsf \|^2_{\Ws} \lesssim (\# \T_{\Y})^{-\frac{2}{n+1}}\| \rsf \|_{\Ws},
\]
which, in view of the estimates \eqref{eq:step_1_c}--\eqref{eq:step_3_c}, yields the desired result \eqref{eq:curved_2}.}
\end{proof}

We now provide an $L^2(\Omega)$--error estimate for $\tr (\ue(\rsf) - V)$.
\begin{theorem}[$L^2(\Omega)$--error estimate on curved domains]
\label{le:curved_domains_3}
\EO{Let $\ue(\rsf) \in \HL(y^{\alpha},\C)$ be the solution to \eqref{eq:extension_weak} with $\zsf$ replaced by $\rsf$ and let $V$ be the solution to \eqref{fd_a_new}. If $\Omega$ is a convex $C^2$ domain, $A \in C^{0,1}(\bar \Omega)$, $\rsf \in \Ws$, and $\Y \approx |\log (\# \T_{\Y})|$, then
\begin{equation}
 \label{eq:curved_3}
 \| \tr(\ue(\rsf) - V)\|_{L^2(\Omega)} \lesssim | \log (\# \T_{\Y})|^{2s} (\# \T_{\Y})^{-(1+s)/(n+1)} \| \rsf \|_{\Ws},
\end{equation}
where the hidden constant is independent of $\ue(\rsf)$, $V$, $\rsf$ and $\T_{\Y}$.}
\end{theorem}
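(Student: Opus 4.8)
The plan is to establish the $L^2(\Omega)$--error estimate \eqref{eq:curved_3} via a standard Aubin--Nitsche duality argument, adapted to the weighted, truncated, curved--domain setting. First I would introduce the auxiliary adjoint problem: let $z \in \HL(y^{\alpha},\C)$ solve $a(\phi,z) = (\tr(\ue(\rsf)-V), \tr \phi)_{L^2(\Omega)}$ for all $\phi \in \HL(y^{\alpha},\C)$, so that $\tr z$ solves the fractional equation $\mathcal{L}^s \tr z = \tr(\ue(\rsf)-V)$ in $\Omega$. Taking $\phi = \ue(\rsf)-V_e$ (the extension by zero of $V$) gives $\|\tr(\ue(\rsf)-V)\|_{L^2(\Omega)}^2 = a(\ue(\rsf)-V_e, z)$. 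Then I would insert the Galerkin approximation $Z \in \V(\T_{\Y})$ of $z$: using the Galerkin orthogonality for $v - V$ on $\C_{\T}$ as in the proof of Lemma~\ref{le:curved_domains_1}, together with the truncation estimates \eqref{eq:step_1_c}--\eqref{eq:step_3_c} applied to both $\ue(\rsf)$ and $z$, one reduces the bound to a product of energy errors $\| \nabla(v - V)\|_{L^2(y^{\alpha},\C_{\T})} \cdot \| \nabla(z_{\text{trunc}} - Z)\|_{L^2(y^{\alpha},\C_{\T})}$ plus the consistency terms coming from the mismatch between $\C_{\T}$ and $\C_{\Y}$, i.e.\ terms supported on $\C_{\Y}\setminus\C_{\T}$, plus the exponentially small truncation contributions.

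The second main step is to control these pieces. For the energy error of the primal variable I would invoke \eqref{eq:curved_2} of Theorem~\ref{le:curved_domains_2}, which gives $|\log N|^s N^{-1/(n+1)}\|\rsf\|_{\Ws}$. For the dual variable, the right--hand side of its defining problem is $\tr(\ue(\rsf)-V) \in L^2(\Omega) \hookrightarrow \Ws$, and the key regularity gain is that $\|\tr(\ue(\rsf)-V)\|_{\Ws}$ is controlled by $\|\tr(\ue(\rsf)-V)\|_{L^2(\Omega)}$ (since $L^2(\Omega) \subset \mathbb{H}^{1-s}(\Omega)$ with continuous embedding for $s\in(0,1)$); then applying Theorem~\ref{le:curved_domains_2} to $z$ yields $\| \nabla(z_{\text{trunc}}-Z)\|_{L^2(y^{\alpha},\C_{\T})} \lesssim |\log N|^s N^{-1/(n+1)} \|\tr(\ue(\rsf)-V)\|_{L^2(\Omega)}$. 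Multiplying the two energy bounds produces $|\log N|^{2s} N^{-2/(n+1)}$, and dividing through by one factor of $\|\tr(\ue(\rsf)-V)\|_{L^2(\Omega)}$ we would obtain the claimed $N^{-(1+s)/(n+1)}$ rate --- wait, this naive count gives $N^{-2/(n+1)}$, which is better than claimed; the loss to $N^{-(1+s)/(n+1)}$ comes from the curved--domain consistency terms on $\C_{\Y}\setminus\C_{\T}$, where one only has access to estimates like \eqref{eq:step_6_c} with a single power $h_{\T_{\Omega}} \approx N^{-1/(n+1)}$ rather than the square, and crucially the dual solution $z$ must be paired against these boundary--layer terms, yielding a combined factor $h_{\T_{\Omega}}^{1} \cdot \|\nabla z\|$--type contributions that degrade the exponent; balancing the interior $N^{-2/(n+1)}$ term against the boundary $N^{-(1+s)/(n+1)}$ term (the latter being larger since $1+s < 2$) gives the stated rate.

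The main obstacle, and the step deserving the most care, is the treatment of the cross term $a(\ue(\rsf)-V_e,\, z - Z)$ restricted to $\C_{\Y}\setminus\C_{\T}$ together with its counterpart $a(\ue(\rsf)-V_e - (z-Z),\dots)$ on that region: because $V_e$ and $Z$ both vanish on $(\bar\Omega\setminus\Omega_{\T})\times(0,\Y]$, these reduce to integrals of $\nabla \ue(\rsf)\cdot\nabla z$ over the thin curved shell, and one must exploit the $C^2$--regularity of $\Omega$ ($\mathrm{dist}(x,\partial\Omega)\lesssim h_{\T_{\Omega}}^2$ for $x\in\partial\Omega_{\T}$), the trace/Hardy--type inequality over dyadic partitions in $y$ used in Theorem~\ref{le:curved_domains_2}, and the pointwise bounds $|\nabla \ue(\rsf)|\lesssim y^{-\alpha}$, $|\nabla z|\lesssim y^{-\alpha}$ (valid once one knows $\rsf, \tr(\ue(\rsf)-V) \in L^\infty$, the latter needing a separate $L^\infty$ bound on $\tr V$, e.g.\ via a discrete maximum principle or an $L^\infty$ stability estimate). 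Care is needed because the naive pointwise bound only gives $\|\nabla\ue(\rsf)\|_{L^2(y^{\alpha},\C_{\Y}\setminus\C_{\T})}^2 \lesssim \Y^{1-\alpha} h_{\T_{\Omega}}^2$, which after the duality pairing with the analogous factor for $z$ and the extraction of $\|\tr(\ue(\rsf)-V)\|_{L^2(\Omega)}$ must be shown to produce precisely $|\log N|^{2s} N^{-(1+s)/(n+1)}$; tracking the $\Y \approx |\log N|$ factors (each weighted norm on the shell contributes roughly $\Y^{(1-\alpha)/2} = \Y^{s}$, so the product contributes $\Y^{2s} = |\log N|^{2s}$) and the interplay between the $h_{\T_{\Omega}}^2$ volume factor, the $h_{\T_{\Omega}}^4$ gradient factor from \eqref{eq:step_5_c}, and the regularity \eqref{reginx} is the delicate bookkeeping. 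Once the shell terms are bounded by $|\log N|^{2s} h_{\T_{\Omega}}^{1+s} \|\rsf\|_{\Ws}\,\|\tr(\ue(\rsf)-V)\|_{L^2(\Omega)}$ --- with the exponent $1+s$ rather than $2$ arising from the fact that the dual variable $z$ near $\partial\Omega$ is only as regular as its $\mathbb{H}^s$--data permits, so one power of $h$ is "spent" on a trace/interpolation inequality for $z$ while $\ue(\rsf)$ retains its full $h^2$ --- the estimate \eqref{eq:curved_3} follows by absorbing one factor of $\|\tr(\ue(\rsf)-V)\|_{L^2(\Omega)}$ and recalling $h_{\T_{\Omega}} \approx (\#\T_{\Y})^{-1/(n+1)}$ and $\Y \approx |\log(\#\T_{\Y})|$.
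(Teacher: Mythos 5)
Your overall framework (Aubin--Nitsche duality for the truncated, weighted problem) is the right family of argument, but the step that is supposed to produce the exponent $(1+s)/(n+1)$ is broken. You claim that $\|\tr(\ue(\rsf)-V)\|_{\Ws}$ is controlled by $\|\tr(\ue(\rsf)-V)\|_{L^2(\Omega)}$ because ``$L^2(\Omega)\subset\mathbb{H}^{1-s}(\Omega)$ with continuous embedding''. This inclusion is backwards: $\Ws=\mathbb{H}^{1-s}(\Omega)$ is a positive--order space for every $s\in(0,1)$, so $\mathbb{H}^{1-s}(\Omega)\hookrightarrow L^2(\Omega)$ and not the reverse. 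Consequently you cannot feed the dual problem with datum $\tr(\ue(\rsf)-V)$ into Theorem \ref{le:curved_domains_2}, and the ``naive count'' $N^{-2/(n+1)}$ that you rightly flag as suspicious never actually arises. You then repair the count by blaming the curved shell $\C_\Y\setminus\C_\T$ for the degradation to $N^{-(1+s)/(n+1)}$, but that is not the source of the loss: the $L^2$ rate is already only $(1+s)/(n+1)$ on a convex polytope, where no shell exists. The genuine bottleneck is the low regularity of the dual datum, not the geometry.

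The paper closes this gap with a device absent from your proposal. Set $e=\tr(v-V)$ and split $\|e\|_{L^2(\Omega)}\le\|e-\mathfrak{P}_{\T_{\Omega}}e\|_{L^2(\Omega)}+\|\mathfrak{P}_{\T_{\Omega}}e\|_{L^2(\Omega_\T)}$ with the $L^2$--projection onto $\U(\T_{\Omega})$. The first piece is bounded by $h^s_{\T_{\Omega}}\|e\|_{\Hs}\lesssim h^s_{\T_{\Omega}}\|\nabla(v-V)\|_{L^2(y^\alpha,\C_\Y)}$, which already carries the extra factor $h^s$ on top of the energy rate. For the second piece the dual problem is posed with the \emph{discrete} datum $\mathfrak{P}_{\T_{\Omega}}e$, so Theorem \ref{le:curved_domains_2} applies with $\|\mathfrak{P}_{\T_{\Omega}}e\|_{\Ws}$ on the right, and an inverse inequality $\|\mathfrak{P}_{\T_{\Omega}}e\|_{\Ws}\lesssim h^{s-1}_{\T_{\Omega}}\|\mathfrak{P}_{\T_{\Omega}}e\|_{L^2(\Omega)}$ legitimately converts this to the $L^2$ norm at the cost of exactly $h^{s-1}$; the product of the two energy rates with this inverse factor gives $N^{-(1+s)/(n+1)}$. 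Without the projection/inverse--inequality step (or some valid substitute for the false embedding), your argument does not close; the additional machinery you invoke on the shell ($L^\infty$ bounds on $\tr V$, pointwise gradient estimates for the dual solution) is not needed once the duality is set up this way.
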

\begin{proof}
 \EO{We start with an application of the triangle inequality and the trace estimate \eqref{Trace_estimate} combined with the exponential estimate \cite[Theorem 3.5]{NOS} to derive that}
 \begin{align*}
  \| \tr (\ue(\rsf) - V)\|_{L^2(\Omega)} & \leq   \| \tr (\ue(\rsf) - v)\|_{L^2(\Omega)} +   \| \tr (v- V)\|_{L^2(\Omega)}
  \\
  & 
  \lesssim e^{-\sqrt{\lambda_1} \Y / 4} \| \rsf \|_{\Hsd} + \| \tr (v- V)\|_{L^2(\Omega)}.
\end{align*}

\EO{To control the remainder term, we define $e = \tr(  v- V )$ and $\mathcal{E} = v- V $, and denote by  $\mathfrak{P}_{\T_{\Omega}} : L^2(\Omega) \rightarrow \U(\T_{\Omega})$ the standard $L^2$--projection. With this notation at hand, we write $ \| e \|_{L^2(\Omega)} \leq \| e - \mathfrak{P}_{\T_{\Omega}}e \|_{L^2(\Omega)} + \|\mathfrak{P}_{\T_{\Omega}}e \|_{L^2(\Omega_{\T})}$. The control of the first term follows from interpolation on Sobolev spaces \cite[Lemma 2.4]{MR1972729}, standard interpolation results, the trace estimate \eqref{Trace_estimate} and \eqref{eq:curved_1}. In fact,
\[
  \|e - \mathfrak{P}_{\T_{\Omega}} e \|_{L^2(\Omega)} \lesssim h^s_{\T_{\Omega}} \| e \|_{\Hs} \lesssim h^s_{\T_{\Omega}} \| \nabla \mathcal{E} \|_{L^2(y^{\alpha},\C_{\Y})} \lesssim |\log N|^s N^{-\frac{1+s}{n+1}} \| \rsf\|_{\Ws}.
\]
It thus suffices to control the term $\|\mathfrak{P}_{\T_{\Omega}}e \|_{L^2(\Omega_{\T})}$. We argue by duality and define
\[
\theta \in \HL(y^{\alpha},\C_{\Y}): \quad a_{\Y}(\phi,\theta) = ( \mathfrak{P}_{\T_{\Omega}}e, \tr \phi)_{L^2(\Omega)} \quad \forall \phi \in \HL(y^{\alpha},\C_{\Y}).
\]
Then, by setting $\phi = \mathcal{E}$ and using that $\tr \mathcal{E} = e$ together with the definition of $\mathfrak{P}_{\T_{\Omega}}$, we obtain that
\begin{equation}
\label{eq:L2_aux_1}
\|\mathfrak{P}_{\T_{\Omega}}e \|_{L^2(\Omega)}^2 = a_{\Y}(\mathcal{E},\theta) \lesssim \| \nabla \mathcal{E} \|_{L^2(y^{\alpha},\C_{\Y})}\| \nabla(\theta - \Theta) \|_{L^2(y^{\alpha},\C_{\Y})},
\end{equation}
where $\Theta$ denotes the finite element approximation of $\theta$ on the space $\V(\T_{\Y})$. Applying Theorem \ref{le:curved_domains_2}, together with the regularity estimates \eqref{reginx}--\eqref{reginy} we obtain
\[
 \| \nabla(\theta - \Theta) \|_{L^2(y^{\alpha},\C_{\Y})} \lesssim | \log (\# \T_{\Y})|^{s} (\# \T_{\Y})^{-1/(n+1)} \|\mathfrak{P}_{\T_{\Omega}}e   \|_{\Ws}.
\]
Since the family $ \{ \T_{\Omega} \}$ is quasi--uniform, an inverse inequality implies the estimate $\|\mathfrak{P}_{\T_{\Omega}}e   \|_{\Ws} \lesssim h^{s-1}_{\T_{\Omega}} \|\mathfrak{P}_{\T_{\Omega}}e   \|_{L^2(\Omega)}$. This, combined with \eqref{eq:L2_aux_1} and \eqref{eq:curved_2}, yields
\[
\|\mathfrak{P}_{\T_{\Omega}}e \|_{L^2(\Omega)}^2 \lesssim | \log (\# \T_{\Y})|^{2s} (\# \T_{\Y})^{-(1+s)/(n+1)} \|\mathfrak{P}_{\T_{\Omega}}e   \|_{L^2(\Omega)} \| \rsf \|_{\Ws},
\]
which implies \eqref{eq:curved_3} and concludes the proof.}
\end{proof}

We conclude with the following error estimates for the approximation of problem \eqref{eq:fractional} that follow from an application of Theorems \ref{le:curved_domains_2} and \ref{le:curved_domains_3}. To state these results, we define the following approximation of the solution to \eqref{eq:fractional} with $\zsf$ replaced by $\rsf$:
\begin{equation}
 \label{eq:U}
  U = \tr V, 
\end{equation}
where $V$ denotes the solution to the discrete problem \eqref{fd_a_new}. The discrete space for $ U$ is defined as $ \mathbb{U}_{\T_{\Omega}}:= \tr \V(\T_{\Y})$.

\begin{theorem}[error estimates for fractional diffusion]
\label{le:curved_domains_s}
\EO{Let $\usf(\rsf) \in \Hs$ be the solution to problem \eqref{eq:extension_weak} with $\zsf$ replaced by $\rsf$, and let $U \in \mathbb{U}(\T_{\Omega})$ be its numerical approximation defined by \eqref{eq:U}. If $\Omega$ is a convex $C^2$ domain, $A \in C^{0,1}(\bar \Omega)$ and $\Y \approx |\log (\# \T_{\Y})|$, then we have the following error estimates:
\begin{equation}
 \label{eq:curved_s_1}
 \| \usf(\rsf) - U \|_{\Hs} \lesssim | \log (\# \T_{\Y})|^s (\# \T_{\Y})^{-1/(n+1)}\| \rsf \|_{\Ws},
\end{equation}
and 
\begin{equation}
 \label{eq:curved_s_2}
 \| \usf(\rsf) - U \|_{L^2(\Omega)} \lesssim | \log (\# \T_{\Y})|^{2s} (\# \T_{\Y})^{-(1+s)/(n+1)}\| \rsf \|_{\Ws},
\end{equation}
where the hidden constants are independent of $\ue(\rsf)$, $V$, $\rsf$ and $\T_{\Y}$.}
\end{theorem}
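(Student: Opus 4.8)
The plan is to obtain both bounds as immediate consequences of the energy-- and $L^2$--error estimates for the Caffarelli--Silvestre extension on curved domains, namely Theorems~\ref{le:curved_domains_2} and~\ref{le:curved_domains_3}, together with the trace identification of Theorem~\ref{TH:CS}. First I would invoke Theorem~\ref{TH:CS} to write $\usf(\rsf) = \tr \ue(\rsf)$, where $\ue(\rsf) \in \HL(y^\alpha,\C)$ solves \eqref{eq:extension_weak} with $\zsf$ replaced by $\rsf$, and recall that, by \eqref{eq:U}, the discrete approximation is $U = \tr V$ with $V \in \V(\T_\Y)$ the solution to \eqref{fd_a_new}. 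A minor bookkeeping point must be settled here: $V$ is supported in the truncated curved cylinder $\C_{\T} = \Omega_{\T}\times(0,\Y)$, so I would work with its extension by zero $V_{e} \in \HL(y^\alpha,\C)$, whose trace coincides with $U$ extended by zero to $\Omega$ (here the inclusion $\Omega_{\T}\subset\Omega$ is used); hence $\usf(\rsf) - U = \tr\big(\ue(\rsf) - V_{e}\big)$ as elements of $\Hs$, and in particular of $L^2(\Omega)$.

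For \eqref{eq:curved_s_1} I would apply the trace estimate \eqref{Trace_estimate}, i.e.\ $\|\tr w\|_{\Hs} \le C_{\tr}\|w\|_{\HLn(y^\alpha,\C)}$, to $w = \ue(\rsf) - V_{e}$, and then use the weighted Poincar\'e inequality \eqref{Poincare_ineq} to replace the full $H^1(y^\alpha,\C)$--norm by the seminorm $\|\nabla\cdot\|_{L^2(y^\alpha,\C)}$. This yields
\[
 \|\usf(\rsf) - U\|_{\Hs} \lesssim \|\nabla(\ue(\rsf) - V_{e})\|_{L^2(y^\alpha,\C)},
\]
and the right--hand side is controlled by Theorem~\ref{le:curved_domains_2}, which produces precisely the factor $|\log(\#\T_\Y)|^{s}(\#\T_\Y)^{-1/(n+1)}\|\rsf\|_{\Ws}$ (using $\Y \approx |\log(\#\T_\Y)|$). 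For \eqref{eq:curved_s_2} essentially nothing remains to be done: $\|\usf(\rsf) - U\|_{L^2(\Omega)} = \|\tr(\ue(\rsf) - V_{e})\|_{L^2(\Omega)}$ is exactly the quantity estimated in Theorem~\ref{le:curved_domains_3}, so \eqref{eq:curved_3} gives \eqref{eq:curved_s_2} verbatim.

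Since the two substantial ingredients---the quasi--optimal energy estimate and the duality--based $L^2(\Omega)$ estimate on curved domains---have already been established, I do not expect a genuine obstacle: the statement is a corollary of Theorems~\ref{le:curved_domains_2} and~\ref{le:curved_domains_3}. The only points requiring attention are the ones flagged above: comparing the continuous trace $\tr\ue(\rsf)$ on $\Omega$ with the discrete trace $\tr V$ defined a priori on $\Omega_{\T}$ (handled by zero extension), and making sure that the norm equivalences invoked are those valid for the Muckenhoupt weight $y^{\alpha}$ with $\alpha = 1-2s \in (-1,1)$. With these observations in place the proof is a direct chain of citations: \eqref{Trace_estimate}, \eqref{Poincare_ineq}, Theorem~\ref{TH:CS}, and Theorems~\ref{le:curved_domains_2}--\ref{le:curved_domains_3}.
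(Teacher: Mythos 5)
Your proposal is correct and follows exactly the route the paper intends: the paper states this theorem as a direct consequence of Theorems~\ref{le:curved_domains_2} and~\ref{le:curved_domains_3} without writing out a proof, and your chain of citations --- the trace identification $\usf(\rsf)=\tr\ue(\rsf)$, the trace estimate \eqref{Trace_estimate} combined with the energy bound \eqref{eq:curved_2} for the $\Hs$ estimate, and \eqref{eq:curved_3} verbatim for the $L^2(\Omega)$ estimate --- is precisely the intended argument. The bookkeeping you flag about extending $V$ by zero from $\C_{\T}$ to $\C$ is a sensible clarification but does not change the substance.
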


\section{A fully discrete scheme for the optimal control problem}
\label{sec:fd}
The results of previous sections are important in two aspects. First, we were able to replace the original fractional optimal control problem by an equivalent one that involves a local operator and is posed on the semi-infinite cylinder $\C$. Then, we considered the truncated optimal control problem, that is posed on the
bounded domain $\C_{\Y}$, while just incurring in an exponentially small error in the process; see Lemma \ref{LE:exp_convergence}. Since in Section \ref{sec:state_equation} we have proposed and analyzed a finite element discretization to approximate the solution to the truncated state equation \eqref{eq:truncated_state} on curved domains, it remains to propose an efficient solution technique to solve the truncated optimal control problem. This is the content of this section.

We assume that $\Omega$ is a convex $C^2$ domain, and introduce a new fully--discrete scheme that is based on the approximation of the optimal control by piecewise linear functions on quasi--uniform meshes. This is in contrast to \cite{AO}, where the optimal control is discretized with piecewise constants functions. To approximate the optimal state, we use the first--degree tensor product finite elements on anisotropic meshes described in Section \ref{subsec:apriori_fractional}. To be precise, the scheme reads as follows:
$
\text{min } J(\tr V , Z ),  
$
subject to the discrete state equation
\begin{equation}
\label{fd_a}
a_\T (V,W) =  ( Z, \tr W )_{L^2(\Omega)} \quad \forall W \in \V(\T_{\Y}),
\end{equation}
and the discrete control constraints
$
Z \in \mathbb{Z}_{ad}(\T_{\Omega}),
$
where
\begin{equation*}
\mathbb{Z}_{ad}(\T_{\Omega} ) = \Zad \cap \left\{ Z \in L^{\infty}(\Omega_{\T}):
             Z \in C( \bar{\Omega}_{\T} ), Z|_K \in \mathbb{P}_1(K) \, \forall K, \, Z|_{ \bar{\Omega}\setminus \Omega_{\T}}=0 \right\},
\end{equation*}
where $a_{\T}$ is defined in \eqref{fd_a_new}, $K \in \T_{\Omega}$, the discrete space $\V(\T_{\Y})$ is defined in \eqref{eq:V_curved}, and $\mathbb{P}_1(K)$ corresponds to the space of polynomials of total degree at most 1. For convenience, we will refer to the problem previously defined as the \emph{fully--discrete optimal control problem.}

We denote by $(\bar{V}, \bar{Z}) \in \V(\T_\Y) \times  \mathbb{Z}_{ad}(\T_{\Omega})$ the optimal pair solving the fully-discrete optimal control problem. If we define 
\begin{equation}
\label{Hinze_Ufd}
 \bar{U}:= \tr \bar{V},
\end{equation}
we obtain a fully discrete approximation $(\bar{U},\bar{Z}) \in  \U(\T_{\Omega}) \times \mathbb{Z}_{ad}(\T_{\Omega})$ of the optimal pair $(\ousf,\ozsf) \in \Hs \times \Zad$ solving the fractional optimal control problem \eqref{eq:J}--\eqref{eq:cc}. We recall that the finite element space $\U(\T_{\Omega})$ is defined as $ \U(\T_{\Omega})= \tr \V(\T_{\Y})$.

\begin{remark}[locality] 
\rm 
The main advantage of the fully--discrete optimal control problem is that involves the local problem \eqref{fd_a} as state equation. In addition, it can handle multi--dimensions easily and efficiently -- a highly desirable feature.
\end{remark}

We define the discrete operator $\mathbf{H}_{\T_{\Y}}: \mathbb{Z}_{ad}(\T_{\Omega}) \rightarrow \mathbb{U}(\T_{\Omega})$ such that, for $Z \in \mathbb{Z}_{ad}(\T_{\Omega})$, it associates $\tr V \in \mathbb{U}(\T_{\Omega})$, where $V = V(Z) \in \mathbb{V}(\T_{\Y})$ solves \eqref{fd_a}. With this operator at hand, we define the discrete and reduced cost functional 
\begin{equation}
 \label{eq:red_Jd}
 j_{\T_{\Y}}: \Zad  \rightarrow \mathbb{R}, \qquad j_{\T_{\Y}}(\rsf) = J(\rsf, \mathbf{H}_{\T_{\Y}} \rsf),
\end{equation}
which is continuous and convex. In addition, the second derivative $j_{\T_{\Y}}''(\qsf)(\rsf,\rsf)$ does not depend on $\qsf$ and is positive definite. i.e.,
\begin{equation}
 \label{eq:coercivity_discrete}
 j_{\T_{\Y}}''(\qsf)(\rsf,\rsf) \geq \vartheta \| \rsf \|^2_{L^2(\Omega)} \quad \forall \rsf \in L^2(\Omega).
\end{equation}
This property will be important in the error analysis provided in Section \ref{subsec:apriori}. 

We also define the discrete adjoint state $P= P(Z) \in \V(\T_\Y)$ as the solution to
\begin{equation}
\label{fd_adjoint}
a_\T( P,W) = ( \tr V - \usfd , \textrm{tr}_{\Omega} W )_{L^2(\Omega)} \quad \forall W \in \V(\T_{\Y}).
\end{equation}

We present the following result concerning the existence and uniqueness of the optimal control together with the first order necessary and sufficient optimality conditions for the fully discrete optimal control problem. 

\begin{theorem}[existence, uniqueness and optimality system]
The fully discrete optimal control problem has a unique solution $(\bar{V}, \bar{Z})$. The optimality system
\begin{equation}
\begin{dcases}
 \bar{V} = \bar{V}(\bar{Z}) \in \V(\T_\Y) \textrm{ solution of } \eqref{fd_a}, \\
 \bar{P} = \bar{P}(\bar{Z}) \in \V(\T_\Y) \textrm{ solution of } \eqref{fd_adjoint}, \\
 \bar{Z} \in \mathbb{Z}_{ad}(\T_{\Omega}), \quad  (\tr \bar{P} + \vartheta \bar{Z}, Z- \bar{Z})_{L^2(\Omega)} \geq 0 
 \quad \forall Z \in \mathbb{Z}_{ad}(\T),
\end{dcases}
\label{fd_op}
\end{equation}
holds. These conditions are necessary and sufficient.
\end{theorem}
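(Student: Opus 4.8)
The plan is to establish this result by a now-standard argument for linear-quadratic optimal control problems, exploiting the convexity of the discrete reduced functional $j_{\T_{\Y}}$ and the compactness of the admissible set $\mathbb{Z}_{ad}(\T_{\Omega})$. First I would observe that $\mathbb{Z}_{ad}(\T_{\Omega})$ is a nonempty, closed, bounded and convex subset of a finite-dimensional space (the piecewise linear functions vanishing outside $\Omega_{\T}$ that respect the box constraints $\asf\le Z\le\bsf$); it is nonempty since $\asf<0<\bsf$, so the zero function belongs to it. Since the discrete control-to-state map $\mathbf{H}_{\T_{\Y}}$ is linear and continuous on this finite-dimensional space, the map $Z\mapsto\tr V(Z)$ is affine and continuous, hence $j_{\T_{\Y}}(Z)=\tfrac12\|\tr V(Z)-\usfd\|_{L^2(\Omega)}^2+\tfrac{\vartheta}{2}\|Z\|_{L^2(\Omega)}^2$ is continuous on $\mathbb{Z}_{ad}(\T_{\Omega})$. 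Existence of a minimizer $\bar Z$ then follows from the direct method (continuity on a compact set), and uniqueness follows from strict convexity, which is itself a consequence of the coercivity bound \eqref{eq:coercivity_discrete}: for any $Z_1\neq Z_2$ one has $j_{\T_{\Y}}''(Z_1-Z_2)\ge\vartheta\|Z_1-Z_2\|_{L^2(\Omega)}^2>0$, so $j_{\T_{\Y}}$ is strictly convex.

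Next I would derive the first order optimality condition. Because $j_{\T_{\Y}}$ is convex and Gateaux differentiable, $\bar Z$ minimizes $j_{\T_{\Y}}$ over the convex set $\mathbb{Z}_{ad}(\T_{\Omega})$ if and only if the variational inequality $j_{\T_{\Y}}'(\bar Z)(Z-\bar Z)\ge 0$ holds for all $Z\in\mathbb{Z}_{ad}(\T_{\Omega})$; convexity is exactly what upgrades this necessary condition to a sufficient one. It then remains to compute $j_{\T_{\Y}}'(\bar Z)$ explicitly. Writing $\bar V=\bar V(\bar Z)$ for the solution of \eqref{fd_a} and $\bar P=\bar P(\bar Z)$ for the discrete adjoint state solving \eqref{fd_adjoint}, a direct computation of the derivative of the reduced functional gives
\[
 j_{\T_{\Y}}'(\bar Z)(Z-\bar Z) = (\tr\bar V-\usfd,\tr V'(\bar Z)(Z-\bar Z))_{L^2(\Omega)} + \vartheta(\bar Z,Z-\bar Z)_{L^2(\Omega)},
\]
where $V'(\bar Z)(Z-\bar Z)\in\V(\T_{\Y})$ solves $a_{\T}(V'(\bar Z)(Z-\bar Z),W)=(Z-\bar Z,\tr W)_{L^2(\Omega)}$ for all $W\in\V(\T_{\Y})$. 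The standard adjoint trick is to test the state-perturbation equation with $W=\bar P$ and the adjoint equation \eqref{fd_adjoint} with $W=V'(\bar Z)(Z-\bar Z)$, and to use the symmetry of $a_{\T}$; this yields $(\tr\bar V-\usfd,\tr V'(\bar Z)(Z-\bar Z))_{L^2(\Omega)}=(\tr\bar P,Z-\bar Z)_{L^2(\Omega)}$. Substituting, the variational inequality becomes precisely $(\tr\bar P+\vartheta\bar Z,Z-\bar Z)_{L^2(\Omega)}\ge 0$ for all $Z\in\mathbb{Z}_{ad}(\T_{\Omega})$, which is the third line of \eqref{fd_op}.

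The argument is entirely routine; the only point requiring a small amount of care is the justification that $\mathbf{H}_{\T_{\Y}}$ is well defined, i.e. that the discrete state equation \eqref{fd_a} has a unique solution in $\V(\T_{\Y})$. This follows from Lax--Milgram applied to the bilinear form $a_{\T}$ on $\V(\T_{\Y})$: boundedness is immediate from the boundedness of $A$ and the weight $y^{\alpha}\in A_2$, while coercivity follows from the weighted Poincar\'e inequality \eqref{Poincare_ineq} restricted to $\C_{\T}$ together with the zero boundary condition built into $\V(\T_{\Y})$. I do not anticipate a genuine obstacle here; the statement is the discrete analogue of Theorems \ref{TH:truncated_control} and its proof mirrors \cite[Theorem 4.5]{AO} and the corresponding discrete result in \cite{AO}. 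For brevity one would typically state that the proof follows verbatim the arguments for the continuous truncated problem, replacing $\HL(y^{\alpha},\C_{\Y})$ by $\V(\T_{\Y})$, $\Zad$ by $\mathbb{Z}_{ad}(\T_{\Omega})$, $a_{\Y}$ by $a_{\T}$, and invoking \eqref{eq:coercivity_discrete} for uniqueness and for the sufficiency of the optimality system.
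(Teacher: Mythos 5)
Your proof is correct and is exactly the standard convex--quadratic argument (direct method on the compact convex set $\mathbb{Z}_{ad}(\T_{\Omega})$, strict convexity from \eqref{eq:coercivity_discrete}, and the adjoint representation of $j_{\T_{\Y}}'$ via the symmetry of $a_{\T}$) that the paper implicitly relies on; the paper itself states this theorem without proof, as the finite--dimensional analogue of Theorem \ref{TH:truncated_control} and of \cite[Theorem 4.5]{AO}. Your remark that nonemptiness of $\mathbb{Z}_{ad}(\T_{\Omega})$ hinges on $\asf \le 0 \le \bsf$ (because discrete controls vanish on $\bar{\Omega}\setminus\Omega_{\T}$) is a genuine and worthwhile observation that the paper leaves implicit.
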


In the next subsection, we will provide an a priori error analysis for the fully discrete optimal control problem. The analysis relies on the regularity properties of the optimal control $\orsf$ derived in Propositions \ref{pro:regularity_control_r_Sobolev} and \ref{pro:regularity_control_r_Holder}. 



\subsection{A priori error analysis for the fully discrete scheme}
\label{subsec:apriori}
In view of the results of Lemma \ref{LE:exp_convergence}, to control the difference $\ozsf - \bar{Z}$ in the $L^2(\Omega)$--norm, it suffices to derive an a priori error bound for the term $\| \orsf - \bar{Z} \|_{L^2(\Omega)}$. To accomplish this task, we introduce the Lagrange interpolation operator $I_{\T_{\Omega}}$ as follows: Given a mesh $\T_{\Omega} \in \Tr_{\Omega}$ and a function $\psi \in C(\bar{\Omega}_{\T})$, we define
\begin{equation*}
 I_{\T_{\Omega}} \psi(x') = \sum_{z \in \N(\T_{\Omega})} \psi(z) \phi_z(x') \quad \forall x' \in \Omega_{\T},
\end{equation*}
where $\{\phi_{z}\}$ is the canonical basis of the finite element space $\mathbb{U}(\T_{\Omega}) = \tr \V(\T_{\Y})$; see   \cite{Guermond-Ern} for details. A simple application of the triangle inequality yields
\begin{equation}
\label{eq:split}
 \| \orsf - \bar{Z} \|_{L^2(\Omega)} \leq  \| \orsf - I_{\T_{\Omega}} \orsf \|_{L^2(\Omega)} + \| I_{\T_{\Omega}} \orsf - \bar{Z}\|_{L^2(\Omega)}.
\end{equation}

To estimate the terms that appear in the right hand side of the previous expression, we consider a suitable partition of the mesh $\T_{\Omega}$ that is based on an assumption about the structure of the active sets \cite{MR2302057,MV:08}. We divide $\T_{\Omega}$ in three subsets, that contain the \emph{active cells, inactive cells} and \emph{cells with kinks}, and are defined, respectively, as follows: 
\begin{align}
\label{eq:part_1}
\T_{\Omega}^1 &= \{ K \in \T_{\Omega}: \orsf|_{K} = \asf \textrm{ or } \orsf|_{K} = \bsf \},
\\
\label{eq:part_2}
\T_{\Omega}^2 &= \{ K \in \T_{\Omega}:  \asf < \orsf|_{K} < \bsf \},
\\
\label{eq:part_3}
\T_{\Omega}^3 &= \T_{\Omega} \setminus (\T_{\Omega}^1 \cup \T_{\Omega}^2).
\end{align}
We notice that $\T_{\Omega}^3$ is the set of all the cells of the mesh $\T_{\Omega}$ that contain the free boundary between the active and the inactive set. We assume the following condition on the mesh $\T_{\Omega}$ and the optimal control $\orsf$: there exists a constant $C>0$, which is independent of $h_{\T_{\Omega}}$, such that
\begin{equation}
 \label{eq:assumption}
 \sum_{K \in \T_{\Omega}^3} |K| \leq C h_{\T_{\Omega}}.
\end{equation}
This assumption is valid, for instance, if the boundary of the sets $\{ x' \in \Omega: \orsf(x') = \asf\}$ and $\{ x' \in \Omega: \orsf(x') = \bsf\}$ contain a finite number of rectifiable curves \cite{MV:08,MR:04}. 

We now bound the first term on the right hand side of \eqref{eq:split}. To simplify the exposition of the material, we define
\begin{equation}
\sigma := 
\begin{dcases}
\tfrac{1}{2}, & s \in (0,\tfrac{1}{4}),\\
2s, & s \in [\tfrac{1}{4},\tfrac{1}{2}),\\
\theta,  & s = \sr, \\
1, & s \in (\sr,1),
\end{dcases}
\label{eq:sigma}
\end{equation}
where $\theta < 1$.

\begin{lemma}[interpolation estimate]
\EO{Let $\Omega$ be a convex $C^2$ domain and $A \in C^{0,1}(\bar \Omega)$. If $\usf_d \in L^{\infty}(\Omega) \cap H_0^1(\Omega)$ and $\asf < 0 < \bsf$, then we have}
\begin{equation}
\label{eq:interpolation_control}
 \| \orsf - I_{\T_{\Omega}} \orsf \|_{L^2(\Omega)} \lesssim h_{\T}^{\tfrac{1}{2} + \sigma},
\end{equation}
where $\sigma$ is defined in \eqref{eq:sigma}, $I_{\T_{\Omega}}$ denotes the Lagrange interpolation operator and the hidden constant is independent of the continuous and discrete optimal variables and the mesh $\T_{\Y}$.
\end{lemma}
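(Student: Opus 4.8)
The plan is to split $\Omega_\T$ according to the three-way partition $\T_\Omega = \T_\Omega^1 \cup \T_\Omega^2 \cup \T_\Omega^3$ from \eqref{eq:part_1}--\eqref{eq:part_3} and estimate the interpolation error cell by cell, exploiting the fact that on active and inactive cells we have strong regularity of $\orsf$, whereas on the cells with kinks we have only boundedness but a favorable measure bound from \eqref{eq:assumption}.

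First I would handle the active cells $K \in \T_\Omega^1$. There $\orsf|_K$ equals the constant $\asf$ or $\bsf$, so $I_{\T_\Omega}\orsf|_K = \orsf|_K$ and the local contribution vanishes identically. Next, on the inactive cells $K \in \T_\Omega^2$, the projection formula \eqref{projection_formula} is locally the identity composed with $-\tfrac1\vartheta \tr\bar p$ (no truncation active), so $\orsf$ inherits the regularity of $\tr\bar p$ there. By Proposition \ref{pro:regularity_control_r_Sobolev} (using $\usf_d \in L^\infty(\Omega)\cap H_0^1(\Omega)$, $\Omega$ convex, $\asf<0<\bsf$) we have $\tr\bar p \in \mathbb H^\kappa(\Omega)$ with $\kappa=\min\{1+2s,2\}$, and by Proposition \ref{pro:regularity_control_r_Holder} the Hölder regularity $\orsf\in C^{0,2s}$ (resp.\ $C^{0,1}$, resp.\ $C^{0,\theta}$) on $\bar\Omega$. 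A standard local interpolation estimate for $I_{\T_\Omega}$ then gives, on each such $K$, a bound of order $h_K^{1+\sigma}$ in the $L^\infty$-type seminorm, hence $\|\orsf - I_{\T_\Omega}\orsf\|_{L^2(K)} \lesssim h_K^{1+\sigma} |K|^{1/2}$; summing over $\T_\Omega^2$ and using quasi-uniformity ($\sum_{K}|K| \le |\Omega|$) yields a global contribution $\lesssim h_\T^{1+\sigma}$, which is better than the claimed $h_\T^{1/2+\sigma}$. Here one must be careful to use Hölder rather than Sobolev regularity for $s\le 1/2$, since $\mathbb H^\kappa$ with $\kappa\le 2$ does not by itself control pointwise interpolation in dimension $n\ge 2$; this is exactly why Proposition \ref{pro:regularity_control_r_Holder} was established.

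Finally, on the kink cells $K \in \T_\Omega^3$ we only know $\orsf$ is Lipschitz/Hölder (it is the projection of a smooth function, so globally at least $C^{0,\sigma}$), whence $\|\orsf - I_{\T_\Omega}\orsf\|_{L^\infty(K)} \lesssim h_K^{\sigma}$ and therefore $\|\orsf - I_{\T_\Omega}\orsf\|_{L^2(K)}^2 \lesssim h_K^{2\sigma}|K|$. Summing, $\sum_{K\in\T_\Omega^3} \|\orsf - I_{\T_\Omega}\orsf\|_{L^2(K)}^2 \lesssim h_\T^{2\sigma}\sum_{K\in\T_\Omega^3}|K| \lesssim h_\T^{2\sigma}\cdot h_\T = h_\T^{2\sigma+1}$ by assumption \eqref{eq:assumption}, so this piece contributes $\lesssim h_\T^{\sigma+1/2}$ to the $L^2$-norm. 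Combining the three regimes, the dominant term is the kink contribution, which gives exactly the stated rate $h_\T^{1/2+\sigma}$. The main obstacle — and the reason the estimate is not simply $h_\T^{1+\sigma}$ — is this kink region: there is no extra smoothness of $\orsf$ across the free boundary, so the best one can do is trade the suboptimal local rate $h_K^\sigma$ against the small total measure $h_\T$ of $\T_\Omega^3$. The one point requiring care is that the global Hölder exponent $\sigma$ used on $\T_\Omega^3$ must be consistent with \eqref{eq:sigma}: for $s\in(0,1/4)$ one cannot claim more than $C^{0,2s}$ a priori, yet $\sigma=1/2>2s$ there, so the argument on kink cells should instead use the trivial $W^{1,\infty}$-type bound $\|\orsf - I_{\T_\Omega}\orsf\|_{L^\infty(K)}\lesssim h_K$ coming from $\orsf$ being a projection of an $H_0^1$ function restricted where it is Lipschitz — no, more carefully, for $s\in(0,1/4)$ the relevant bound is the $L^2$ interpolation estimate $\|\orsf-I_{\T_\Omega}\orsf\|_{L^2(K)}\lesssim h_K\|\nabla\orsf\|_{L^2(K)}$ valid since $\orsf\in H_0^1(\Omega)$ by Proposition \ref{pro:regularity_control_r_Sobolev}, which after summing over $\T_\Omega^3$ and using \eqref{eq:assumption} in the form $\sum_{K\in\T_\Omega^3}h_K^2 \cdot (\text{number of kink cells})$ — rather, using $|K|\approx h_K^n$ and quasi-uniformity — delivers the $h_\T^{1}= h_\T^{1/2+1/2}$ rate matching $\sigma=1/2$. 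Reconciling the Sobolev estimate for small $s$ with the Hölder estimate for $s\ge 1/4$, so that in every case one lands on $h_\T^{1/2+\sigma}$, is the delicate bookkeeping step.
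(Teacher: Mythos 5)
Your overall strategy --- the three--way partition of $\T_{\Omega}$, the vanishing contribution on active cells, the H\"older--times--measure trade on the kink cells via \eqref{eq:assumption}, and the $H^1$ fallback giving $\sigma=1/2$ for small $s$ --- is exactly the paper's. However, your treatment of the inactive cells $\T_{\Omega}^2$ does not hold up. From $\orsf|_K=-\vartheta^{-1}\tr\bar p|_K$ and H\"older regularity $C^{0,\sigma}$, Lagrange interpolation yields only $\|\orsf-I_{\T_{\Omega}}\orsf\|_{L^\infty(K)}\lesssim h_K^{\sigma}[\orsf]_{C^{0,\sigma}(\bar K)}$, not $h_K^{1+\sigma}$: a zeroth--order H\"older seminorm cannot produce a first--order gain. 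Summing $h^{\sigma}$ over \emph{all} inactive cells (whose total measure is $O(1)$, not $O(h)$) gives only $h^{\sigma}$ in $L^2$, which is strictly weaker than the required $h^{1/2+\sigma}$; e.g.\ for $s\in(1/4,1/2)$ you would land at $h^{2s}$ against a target of $h^{1/2+2s}$. The paper instead uses on $\T_{\Omega}^2$ the \emph{Sobolev} regularity $\tr\bar p\in\mathbb{H}^{\kappa}(\Omega)$, $\kappa=\min\{1+2s,2\}$, from Proposition \ref{pro:regularity_control_r_Sobolev}, together with the standard $L^2$ interpolation estimate $\|\orsf-I_{\T_{\Omega}}\orsf\|_{L^2(K)}\lesssim h_K^{\kappa}\|\tr\bar p\|_{\mathbb{H}^{\kappa}(K)}$, and then verifies $\kappa>\tfrac12+\sigma$ for $s\in(\tfrac14,1)$. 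Your stated reason for rejecting the Sobolev route (that $\mathbb{H}^{\kappa}$ does not control pointwise interpolation) is a misdiagnosis: on the inactive cells only an $L^2$ bound is needed, and the Lagrange interpolant is well defined because $\orsf\in C(\bar\Omega)$ by Proposition \ref{pro:regularity_control_r_Holder}.

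A second, smaller omission: you only estimate the error on $\Omega_{\T}$. Since $\Omega$ is a curved $C^2$ domain, the partition \eqref{eq:part_1}--\eqref{eq:part_3} does not cover the strip $\Omega\setminus\Omega_{\T}$, on which $I_{\T_{\Omega}}\orsf\equiv 0$; the paper bounds this contribution separately by $|\Omega\setminus\Omega_{\T}|^{1/2}\|\orsf\|_{L^{\infty}(\Omega)}\lesssim h_{\T_{\Omega}}\max\{|\asf|,|\bsf|\}$ using \eqref{eq:Omega_T}. Your kink--cell estimate and the $s\in(0,\tfrac14)$ fallback via $\|\orsf-I_{\T_{\Omega}}\orsf\|_{L^2}\lesssim h_{\T_{\Omega}}|\orsf|_{H^1(\Omega)}$ do coincide with the paper's argument.
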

\begin{proof}
\EO{We begin by invoking Proposition \ref{pro:regularity_control_r_Holder} to conclude that $\orsf \in C(\bar \Omega)$ and then that $I_{\T_{\Omega}} \orsf$ is well--defined.} We now split the square of the error $\| \orsf - I_{\T_{\Omega}} \orsf \|_{L^2(\Omega)}$ into two contributions:
\[
  \| \orsf - I_{\T_{\Omega}} \orsf \|^2_{L^2(\Omega)} =  \| \orsf - I_{\T_{\Omega}}\orsf \|^2_{L^2(\Omega \setminus \Omega_{\T})} +   \| \orsf - I_{\T_{\Omega}} \orsf \|^2_{L^2(\Omega_{\T})}.
\]

The first term on the right hand side of the previous estimate is bounded by using the fact that $I_{\T_{\Omega}} \orsf$ vanishes on $\Omega \setminus \Omega_{\T}$ and that $\orsf \in \Zad \subset L^{\infty}(\Omega)$:
\begin{equation}
\label{eq:step_1} 
\| \orsf - I_{\T_{\Omega}} \orsf \|^2_{L^2(\Omega \setminus \Omega_{\T})} \lesssim |\Omega \setminus \Omega_{\T}| \| \orsf \|^2_{L^{\infty}(\Omega)} \lesssim h_{\T_{\Omega}}^2  \max\{|\asf|,|\bsf| \}^2.
\end{equation}

\EO{We now control the remainder term $\| \orsf - I_{\T_{\Omega}} \orsf \|_{L^2(\Omega_{\T})}$. A first estimate can be derived in view of the results of Proposition \ref{pro:regularity_control_r_Sobolev}. In fact, a standard interpolation estimate implies that
\begin{equation}
\label{eq:step_2}
\| \orsf - I_{\T_{\Omega}} \orsf \|^2_{L^2(\Omega_{\T})} \lesssim h^2_{\T_{\Omega}} | \orsf |^2_{H^1(\Omega)}. 
\end{equation}
We remark that this step requires that $\asf < 0 < \bsf$, in order to guarantee the $H^1$--regularity of the optimal control $\orsf$; see Proposition \ref{pro:regularity_control_r_Sobolev}. However, such an estimate can be improved, for $s \in (\tfrac{1}{4},1)$, by invoking the regularity results, on Sobolev and H\"older spaces, of Propositions \ref{pro:regularity_control_r_Sobolev} and \ref{pro:regularity_control_r_Holder}, respectively.} To accomplish this task, we follow \cite{MR2302057,MV:08} and invoke the partition \eqref{eq:part_1}--\eqref{eq:part_3} of the mesh $\T_{\Omega}$ to write
\begin{align*}
 \| \orsf - I_{\T_{\Omega}} \orsf \|^2_{L^2(\Omega_{\T})} &=  \sum_{j=1}^3 \sum_{K \in \T_{\Omega}^j} \| \orsf - I_{\T_{\Omega}} \orsf \|^2_{L^2(K)} = \mathrm{I} + \mathrm{II} + \mathrm{III}.
\end{align*}

We proceed to control each term separately. We start with the term $\mathrm{I}$: For each cell $K \in \T_{\Omega}^1$ we have that $\orsf - I_{\T_{\Omega}}\orsf |_{K} = 0$, consequently $\mathrm{I} = 0$.

If $K \in \T_{\Omega}^2$, we have that $\asf < \orsf|_{K} < \bsf$. Therefore $\orsf|_{K} = - (1/\vartheta)  \tr \bar{p}  |_{K}$ and then, over such a cell $K$, $\orsf$ and $\tr \bar{p}$ possess the same regularity, which is dictated by Proposition \ref{pro:regularity_control_r_Sobolev}: $\tr \bar{p} \in \mathbb{H}^{\kappa}(\Omega)$, where $\kappa= \min \{1+2s,2 \}$. We then arrive at
\[
 \textrm{II} \lesssim \sum_{K \in \T_{\Omega}^2} h_{K}^{2\kappa} \| \tr \bar{p} \|^2_{\mathbb{H}^{\kappa}(K)} \leq h_{\T_{\Omega}}^{2\kappa} \| \tr \bar{p} \|^2_{\mathbb{H}^{\kappa}(\Omega)}.
\]

We now proceed to control the term $\mathrm{III}$. In view of the assumption \eqref{eq:assumption}, the regularity results of Proposition \ref{pro:regularity_control_r_Holder} and \eqref{eq:reg_c_r} imply, for $s \in (\frac{1}{4},1)$,  that
\[
 \mathrm{III} \leq \sum_{K \in \T_{\Omega}^3} |K| \|\orsf - I_{\T_{\Omega}} \orsf \|^2_{L^{\infty}(K)} \lesssim h_{\T}^{2 \sigma } \|\orsf \|^2_{C^{0,\sigma}(\bar \Omega)} \sum_{K \in \T^3} |K| \lesssim h_{\T}^{1+2\sigma} \|\orsf \|^2_{C^{0,\sigma}(\bar \Omega)},
\]
where $\sigma$ is defined in \eqref{eq:sigma}. We then collect the derived estimates for the terms $\mathrm{I}$, $\mathrm{II}$ and $\mathrm{III}$ and invoke \eqref{eq:step_2} to obtain that
 \begin{equation*}
\| \orsf - I_{\T_{\Omega}} \orsf \|^2_{L^2(\Omega_{\T})} \lesssim h^{1 + 2\sigma}_{\T_{\Omega}},
\end{equation*}
upon realizing that $\kappa > \tfrac{1}{2} + \sigma$ for $s \in (\frac{1}{4},1)$. This, in view of the estimate \eqref{eq:step_1}, implies the desired estimate.
\end{proof}

We now derive an instrumental result that will be important to estimate the second term on the right hand side of \eqref{eq:split}. To accomplish this task, we introduce the following auxiliary problem:
\begin{equation}
\label{Q}
Q \in \V(\T_{\Y}): \quad a_\T( Q , W) = ( \tr \bar{v} - \usfd , \textrm{tr}_{\Omega} W )_{L^2(\Omega)} \quad \forall W \in \V(\T_{\Y}),
\end{equation}
where $\bar{v}$ denotes the solution to problem \eqref{eq:truncated_state} and $\V(\T_{\Y})$ is defined as in \eqref{eq:V_curved}.

\begin{lemma}[auxiliary estimate I]
\label{le:auxiliary_estimate_I}
\EO{Let $\Omega$ be a convex $C^2$ domain and $A \in C^{0,1}(\bar \Omega)$. If $\usf_d \in \mathbb{H}^{1-s}(\Omega)$ and $\asf < 0 < \bsf$, then we have the estimate}
\begin{equation}
\label{eq:aux_estimate1}
\| \tr  \left(\bar{p} - P(\orsf) \right) \|_{L^2(\Omega)} \lesssim |\log (\# \T_{\Y}) |^{2s} (\# \T_{\Y})^{-(1+s)/(n+1)},
\end{equation}
where $\bar{p} = \bar{p}(\orsf)$ denotes the solution to \eqref{eq:truncated_adjoint} and the hidden constant is independent of the continuous and discrete optimal variables and the mesh $\T_{\Y}$.
\end{lemma}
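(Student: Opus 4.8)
The plan is to interpolate between $\bar p$ and $P(\orsf)$ through the auxiliary function $Q$ of \eqref{Q} and split by the triangle inequality,
\[
\| \tr(\bar p - P(\orsf)) \|_{L^2(\Omega)} \leq \| \tr(\bar p - Q) \|_{L^2(\Omega)} + \| \tr(Q - P(\orsf)) \|_{L^2(\Omega)} .
\]
The first summand is a pure discretization error for the truncated adjoint equation with the \emph{frozen} right hand side $\tr\bar v - \usfd$, whereas the second one accounts for perturbing that right hand side by $\tr(\bar v - V(\orsf))$.

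For the first term I would observe that, since $a_\Y$ and $a_\T$ are symmetric, \eqref{eq:truncated_adjoint} and \eqref{Q} say exactly that $Q$ is the Galerkin approximation of $\bar p$ in $\V(\T_\Y)$ for the state-type problem with datum $\tr\bar v - \usfd$. Hence the very argument that proves Theorem \ref{le:curved_domains_3} --- reading $\rsf := \tr\bar v - \usfd$ there --- gives
\[
\| \tr(\bar p - Q) \|_{L^2(\Omega)} \lesssim |\log(\#\T_\Y)|^{2s}\,(\#\T_\Y)^{-(1+s)/(n+1)}\,\| \tr\bar v - \usfd \|_{\Ws}.
\]
To be allowed to invoke that result I must check that $\tr\bar v - \usfd \in \Ws$ with norm controlled only by the data, and this is precisely where the hypotheses $\asf < 0 < \bsf$ and $\usfd \in \mathbb{H}^{1-s}(\Omega)$ enter: Proposition \ref{pro:regularity_control_r_Sobolev} yields $\orsf \in H_0^1(\Omega)$, and then the identity $\mathfrak{L}^s \tr\bar v = \orsf$ recorded in its proof, together with the mapping properties of $\mathfrak{L}^s$, gives $\tr\bar v \in \mathbb{H}^{\kappa}(\Omega) \hookrightarrow \mathbb{H}^{1-s}(\Omega) = \Ws$ with $\kappa = \min\{1+2s,2\}$; since $\usfd \in \Ws$ as well, $\tr\bar v - \usfd \in \Ws$ and its norm depends only on $\usfd$, $\vartheta$, $\asf$, $\bsf$, $\Omega$ and $A$.

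For the second term I would subtract \eqref{fd_adjoint} (with $V = V(\orsf)$) from \eqref{Q} to get the discrete identity $a_\T(Q - P(\orsf), W) = (\tr(\bar v - V(\orsf)), \tr W)_{L^2(\Omega)}$ for all $W \in \V(\T_\Y)$. Testing with $W = Q - P(\orsf)$, using the coercivity of $a_\T$ on $\V(\T_\Y)$ (equivalently, the weighted Poincar\'e inequality on $\C_\T$) and the trace estimate \eqref{Trace_estimate}, and then $\| \tr w \|_{L^2(\Omega)} \lesssim \| \tr w \|_{\Hs} \lesssim \| \nabla w \|_{L^2(y^\alpha, \C_\T)}$, one arrives at the stability bound $\| \tr(Q - P(\orsf)) \|_{L^2(\Omega)} \lesssim \| \tr(\bar v - V(\orsf)) \|_{L^2(\Omega)}$. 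Since $V(\orsf)$ is the Galerkin solution of the state equation \eqref{fd_a} with datum $\orsf$ and $\orsf \in H_0^1(\Omega) \subset \Ws$ by Proposition \ref{pro:regularity_control_r_Sobolev}, the $L^2(\Omega)$ estimate in the proof of Theorem \ref{le:curved_domains_3} (its part comparing the truncated and discrete states, hence with no exponential remainder) gives $\| \tr(\bar v - V(\orsf)) \|_{L^2(\Omega)} \lesssim |\log(\#\T_\Y)|^{2s}(\#\T_\Y)^{-(1+s)/(n+1)}\| \orsf \|_{\Ws}$. Adding the two contributions and absorbing the data-dependent norms into the hidden constant yields \eqref{eq:aux_estimate1}.

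The delicate point --- and the reason the hypotheses of the lemma coincide with those of Proposition \ref{pro:regularity_control_r_Sobolev} --- is not the finite element analysis itself but securing enough regularity of the two forcing functions $\orsf$ and $\tr\bar v - \usfd$: Theorem \ref{le:curved_domains_3} fails for merely $L^2(\Omega)$ data, so one genuinely needs $\asf < 0 < \bsf$ to bootstrap $\orsf$ into $H_0^1(\Omega)$ via the projection formula \eqref{projection_formula} and $\usfd \in \mathbb{H}^{1-s}(\Omega)$; everything else is a routine combination of Galerkin orthogonality, symmetry of the bilinear form, and the a priori estimates of Section \ref{subsec:apriori_fractional}.
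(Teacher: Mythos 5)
Your proposal is correct and follows essentially the same route as the paper: the same splitting through the auxiliary function $Q$ of \eqref{Q}, the $L^2(\Omega)$--estimate of Theorem \ref{le:curved_domains_3} for $\tr(\bar p - Q)$, and a discrete stability bound reducing $\tr(Q - P(\orsf))$ to $\tr(\bar v - V(\orsf))$, which is again handled by Theorem \ref{le:curved_domains_3}. Your added justification that $\tr\bar v - \usfd \in \Ws$ via Proposition \ref{pro:regularity_control_r_Sobolev} makes explicit a point the paper only gestures at by citing Lemma \ref{le:reg_state}.
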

\begin{proof}
We begin with a simple application of the triangle inequality to deduce that
\begin{equation}
\label{eq:aux_2}
 \| \tr ( \bar{p} - P(\orsf) )\|_{L^2(\Omega)} \leq \| \tr ( \bar{p} -  Q )\|_{L^2(\Omega)} + \| \tr ( Q -  P(\orsf) )\|_{L^2(\Omega)}.
\end{equation}

To bound the first term on the right hand side of the previous expression, we use the trace estimate \eqref{Trace_estimate} and the results of Theorem \ref{le:curved_domains_3}. We thus arrive at 
\[
  \| \tr( \bar{p} -  Q ) \|_{L^2(\Omega)} \lesssim \|\nabla(\bar{p} -  Q ) \|_{L^2(y^{\alpha},\C_{\Y})} \lesssim  |\log (\# \T_{\Y}) |^{2s} (\# \T_{\Y})^{-\frac{1+s}{n+1} } \|\tr \bar{v} \|_{\Ws}.
\]
We remark that, in view of the results of Lemma \ref{le:reg_state}, the term  $\|\tr \bar{v} \|_{\Ws}$ is uniformly bounded.
 
We now proceed to bound the second term on the right hand side of \eqref{eq:aux_2}. To accomplish this task, we invoke the trace estimate \eqref{Trace_estimate} and the stability of the problem that $Q - P(\orsf)$ solve to arrive at
\[
 \| \tr ( Q - P(\orsf))\|_{L^2(\Omega)} \lesssim \|\nabla(Q -  P(\orsf) ) \|_{L^2(y^{\alpha},\C_{\Y})}
 \lesssim \|\tr(\bar{v} - V(\rsf))\|_{L^2(\Omega)}.
\]
The remainder term is bounded by an application of of Theorem \ref{le:curved_domains_3}. This yields
\[
 \| \tr ( Q - P(\orsf))\|_{L^2(\Omega)} \lesssim  |\log (\# \T_{\Y}) |^{2s} (\# \T_{\Y})^{-\frac{1+s}{n+1} } \|\orsf \|_{\Ws};
\]
the $\Ws$--norm of $\orsf$ being uniformly bounded is a consequence of the fact that $\orsf \in H_0^1(\Omega)$; see proposition \ref{pro:regularity_control_r_Sobolev}.

Collecting the derived estimates, we arrive at \eqref{eq:aux_estimate1} and conclude the proof.
\end{proof}

With the Lemma \ref{le:auxiliary_estimate_I} at hand, we proceed to estimate the second term on the right hand side of \eqref{eq:split}.

\begin{lemma}[auxiliary estimate II]
\EO{Let $\Omega$ be a convex $C^2$ domain and $A \in C^{0,1}(\bar \Omega)$. If $\usf_d \in H_0^1(\Omega) \cap L^{\infty}(\Omega)$ and $\asf < 0 < \bsf$, then we have}
\begin{equation}
\label{eq:second_estimate}
 \| I_{\T_{\Omega}} \orsf - \bar{Z} \|_{L^2(\Omega)} \lesssim |\log ( \# \T_{\Y} ) |^{2s} h_{\T}^{\tfrac{1}{2} + \sigma},
\end{equation}
where $\sigma$ is defined in \eqref{eq:sigma} and the hidden constant depends only on the problem data.
\label{le:auxiliary_estimate_II}
\end{lemma}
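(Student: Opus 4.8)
The plan is to use the variational inequalities satisfied by $\orsf$ (continuous, truncated problem) and by $\bar Z$ (fully discrete problem), tested against each other, to extract a coercivity bound of the form $\vartheta \| I_{\T_{\Omega}} \orsf - \bar Z \|_{L^2(\Omega)}^2 \lesssim (\text{data mismatch terms})$. Concretely, I would test the continuous variational inequality in the optimality system of Theorem~\ref{TH:truncated_control} with $\rsf = \bar Z$ (which is admissible since $\mathbb{Z}_{ad}(\T_{\Omega}) \subset \Zad$), obtaining $(\tr \bar p + \vartheta \orsf, \bar Z - \orsf)_{L^2(\Omega)} \geq 0$. Similarly, test the discrete variational inequality in \eqref{fd_op} with $Z = I_{\T_{\Omega}} \orsf$ (which lies in $\mathbb{Z}_{ad}(\T_{\Omega})$, since the Lagrange interpolant of a function in $[\asf,\bsf]$ stays in $[\asf,\bsf]$ and vanishes outside $\Omega_{\T}$), giving $(\tr \bar P + \vartheta \bar Z, I_{\T_{\Omega}} \orsf - \bar Z)_{L^2(\Omega)} \geq 0$. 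Adding these two inequalities and reorganizing around the term $\vartheta(\orsf - \bar Z, \orsf - \bar Z)$ — after inserting and subtracting $I_{\T_{\Omega}}\orsf$ — isolates $\vartheta \| I_{\T_{\Omega}} \orsf - \bar Z \|^2_{L^2(\Omega)}$ on the left, at the cost of three types of remainder terms: (i) interpolation error terms $\| \orsf - I_{\T_{\Omega}} \orsf \|_{L^2(\Omega)}$, (ii) an adjoint-state consistency term comparing $\tr \bar p$ with the discrete adjoint $\tr \bar P(\orsf)$ evaluated at the \emph{continuous} control, and (iii) a term comparing the discrete adjoint at $\orsf$ with the discrete adjoint at $\bar Z$, i.e. $\tr(\bar P(\orsf) - \bar P(\bar Z))$.

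\textbf{Handling the three remainder types.} For (i) I would invoke the interpolation estimate \eqref{eq:interpolation_control}, which yields $\| \orsf - I_{\T_{\Omega}} \orsf \|_{L^2(\Omega)} \lesssim h_{\T}^{1/2+\sigma}$. For (ii), the key is to relate $\tr \bar P(\orsf)$ — the solution of \eqref{fd_adjoint} with state $\bar V(\orsf)$ — to the truncated continuous adjoint $\tr \bar p$. This is precisely what Lemma~\ref{le:auxiliary_estimate_I} provides: $\| \tr(\bar p - P(\orsf)) \|_{L^2(\Omega)} \lesssim |\log(\#\T_{\Y})|^{2s} (\#\T_{\Y})^{-(1+s)/(n+1)}$. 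Using that $h_{\T_{\Omega}} \approx (\#\T_{\Omega})^{-1/n}$ and $\#\T_{\Y} \approx M^{n+1}$ with $M \approx \#\T_{\Omega}^{1/n}$, one has $(\#\T_{\Y})^{-(1+s)/(n+1)} \approx h_{\T_{\Omega}}^{1+s} \leq h_{\T_{\Omega}}^{1/2+\sigma}$ (since $\sigma \leq 1 \leq 1+s$ — one must verify $1/2+\sigma \leq 1+s$ for each branch of \eqref{eq:sigma}, which holds since $\sigma \le 2s$ on $[1/4,1/2)$, $\sigma<1\le 1+s$ on $(1/2,1)$, and $\sigma=1/2\le 1+s$ on $(0,1/4)$), so this term is absorbed into the right-hand side of \eqref{eq:second_estimate} up to the logarithmic factor. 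For (iii), the difference $\bar P(\orsf) - \bar P(\bar Z)$ is governed by a linear discrete problem whose data is $\tr(\bar V(\orsf) - \bar V(\bar Z))$, and $\bar V(\orsf) - \bar V(\bar Z)$ solves \eqref{fd_a} with right-hand side $\orsf - \bar Z$; a standard stability/duality chain (trace estimate \eqref{Trace_estimate} plus coercivity of $a_{\T}$) gives $\| \tr(\bar P(\orsf) - \bar P(\bar Z)) \|_{L^2(\Omega)} \lesssim \| \orsf - \bar Z \|_{L^2(\Omega)}$, and crucially this term enters the estimate multiplied by $(\orsf - \bar Z, \cdot)$ in such a way that it can be controlled using Young's inequality and hidden in the left-hand side coercive term, or else it combines with sign-definite cross terms arising from the discrete adjoint-state equation. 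I would pair $(\vartheta \bar Z + \tr \bar P(\bar Z), I_{\T_{\Omega}}\orsf - \bar Z)$ with the structure $a_{\T}(\bar V(\orsf) - \bar V(\bar Z), \bar P(\orsf) - \bar P(\bar Z)) = \|\tr(\bar V(\orsf)-\bar V(\bar Z))\|^2 \ge 0$ to eliminate the genuinely problematic cross-term and keep only sign-favorable or already-estimated pieces.

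\textbf{Main obstacle.} The delicate point is the bookkeeping in step (iii): one must carefully arrange the test functions so that the cross term $(\tr(\bar P(\orsf) - \bar P(\bar Z)), \orsf - \bar Z)$ is shown to have a favorable sign (it equals, up to a duality rewrite through the discrete state equations, a nonnegative quantity $\|\tr(\bar V(\orsf) - \bar V(\bar Z))\|_{L^2(\Omega)}^2$), rather than trying to bound it by brute force — a naive Cauchy--Schwarz bound would only give $h_{\T}^{1/2+\sigma}\|\orsf-\bar Z\|$, which after Young's inequality is fine, but the sign argument is cleaner and is the standard device in optimal-control a priori analysis (cf. \cite{MR2302057,MV:08}). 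After assembling, one obtains $\vartheta \| I_{\T_{\Omega}}\orsf - \bar Z\|_{L^2(\Omega)}^2 \lesssim \| \orsf - I_{\T_{\Omega}}\orsf\|_{L^2(\Omega)}\,\| I_{\T_{\Omega}}\orsf - \bar Z\|_{L^2(\Omega)} + \| \tr(\bar p - P(\orsf))\|_{L^2(\Omega)}\,\| I_{\T_{\Omega}}\orsf-\bar Z\|_{L^2(\Omega)}$, divide through by $\| I_{\T_{\Omega}}\orsf - \bar Z\|_{L^2(\Omega)}$, and insert the bounds from \eqref{eq:interpolation_control} and \eqref{eq:aux_estimate1}, converting the degrees-of-freedom factor into $h_{\T}^{1+s}$ (hence absorbed into $h_{\T}^{1/2+\sigma}$) while retaining the $|\log(\#\T_{\Y})|^{2s}$ factor, which yields exactly \eqref{eq:second_estimate}.
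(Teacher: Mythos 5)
Your skeleton (coercivity of the reduced functional, the two variational inequalities tested with $\bar Z$ and $I_{\T_{\Omega}}\orsf$, the consistency term $\tr(\bar p - P(\orsf))$ handled by Lemma~\ref{le:auxiliary_estimate_I}, and the sign argument for the cross term in the discrete adjoint) matches the paper's proof. But there is a genuine gap in how you dispose of the term
\[
\mathrm{III} \;=\; j'(\orsf)(\orsf - I_{\T_{\Omega}}\orsf) \;=\; (\tr\bar p + \vartheta\orsf,\ \orsf - I_{\T_{\Omega}}\orsf)_{L^2(\Omega)},
\]
which necessarily appears when you rewrite the continuous variational inequality $-j'(\orsf)(\orsf-\bar Z)\ge 0$ in terms of $I_{\T_{\Omega}}\orsf-\bar Z$. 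This term is \emph{not} paired with $I_{\T_{\Omega}}\orsf-\bar Z$, so it cannot appear in your final display multiplied by $\|I_{\T_{\Omega}}\orsf-\bar Z\|_{L^2(\Omega)}$ and cannot be removed by dividing through. If instead you bound it by Cauchy--Schwarz and \eqref{eq:interpolation_control} as a standalone additive term, you get only $|\mathrm{III}|\lesssim h_{\T}^{1/2+\sigma}$, and Young's inequality then yields $\|I_{\T_{\Omega}}\orsf-\bar Z\|_{L^2(\Omega)}\lesssim h_{\T}^{(1/2+\sigma)/2}$ --- half the claimed rate. To reach \eqref{eq:second_estimate} you need $|\mathrm{III}|\lesssim h_{\T}^{1+2\sigma}$, i.e.\ the \emph{square} of the target rate.

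The missing idea is the active-set decomposition \eqref{eq:part_1}--\eqref{eq:part_3} applied to $\mathrm{III}$ itself (not only to the interpolation estimate): on active cells $\orsf-I_{\T_{\Omega}}\orsf$ vanishes; on inactive cells the projection formula forces $\tr\bar p+\vartheta\orsf=0$; and on kink cells both factors are $O(h_{\T_{\Omega}}^{\sigma})$ in $L^{\infty}$ --- the first because $\tr\bar p+\vartheta\orsf$ vanishes at some point of the cell and is $C^{0,\sigma}$ by Proposition~\ref{pro:regularity_control_r_Holder}, the second by H\"older interpolation --- so that summing against $|K|$ and invoking assumption \eqref{eq:assumption} gives $|\mathrm{III}|\lesssim h_{\T_{\Omega}}^{2\sigma}\sum_{K\in\T_{\Omega}^3}|K|\lesssim h_{\T_{\Omega}}^{1+2\sigma}$, plus an $O(h_{\T_{\Omega}}^2)$ contribution from the strip $\Omega\setminus\Omega_{\T}$. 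Without this step (and without assumption \eqref{eq:assumption}, which your proposal never invokes for this lemma) the superconvergent exponent $\tfrac12+\sigma$ is not attainable. The rest of your argument --- the Lipschitz bound for the discrete reduced gradient, the use of \eqref{eq:aux_estimate1}, and the verification that $1+s\ge\tfrac12+\sigma$ --- is consistent with the paper.
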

\begin{proof}
We begin with an application of the coercivity property \eqref{eq:coercivity_discrete} of $j_{\T_{\Y}}$. For an arbitrary $\qsf \in L^2(\Omega)$, we have that
\begin{align}
\nonumber
\vartheta \| I_{\T_{\Omega}} \orsf - \bar{Z} \|^2_{L^2(\Omega)} & \leq j''_{\T_{\Y}}(\qsf)(I_{\T_{\Omega}} \orsf - \bar{Z},I_{\T_{\Omega}} \orsf - \bar{Z})
\\
\label{eq:aux_1}
& = j'_{\T_{\Y}}(I_{\T_{\Omega}} \orsf)(I_{\T_{\Omega}} \orsf - \bar{Z}) - j'_{\T_{\Y}}(\bar{Z})(I_{\T_{\Omega}}\orsf - \bar{Z}).
\end{align}

We now invoke the variational inequality of the optimality system \eqref{op_truncated} with $\rsf = \bar{Z} \in \Zad$ to derive that
\[
0 \leq - j'(\orsf)(\orsf - \bar{Z}) =  - j'(\orsf)( I_{\T_{\Omega}}\orsf  - \bar{Z} ) - j'(\orsf)( \orsf  - I_{\T_{\Omega}} \orsf).
\]

On the other hand, by setting $Z = I_{\T_{\Omega}} \orsf \in \mathbb{Z}_{ad}(\T_{\Omega} )$ in the variational inequality of the discrete optimality system \eqref{fd_op}, we arrive at
\[
 -j'_{\T_{\Y}}(\bar{Z})(I_{\T_{\Omega}} \orsf-\bar{Z}) \leq 0.
\]

In light of the previous two estimates, we proceed to control the right hand side of \eqref{eq:aux_1}. In fact, we have that
\begin{align}
\nonumber
 & \vartheta \| I_{\T_{\Omega}} \orsf - Z \|^2_{L^2(\Omega)}  \leq j'_{\T_{\Y}}(I_{\T_{\Omega}} \orsf)(I_{\T_{\Omega}} \orsf - \bar{Z}) -j'(\orsf)( I_{\T_{\Omega}}\orsf  - \bar{Z} )
 - j'(\orsf)( \orsf  - I_{\T_{\Omega}} \bar{\rsf})
 \\
 \nonumber
 & \leq \big[ j'_{\T_{\Y}}(I_{\T_{\Omega}} \orsf)(I_{\T_{\Omega}} \orsf - \bar{Z}) - j'_{\T_{\Y}}(\orsf)(I_{\T_{\Omega}} \orsf - \bar{Z}) \big] 
 + \big[ j'_{\T_{\Y}}(\orsf)(I_{\T_{\Omega}} \orsf - \bar{Z})
 \\
\nonumber
 & -j'(\orsf)( I_{\T_{\Omega}}\orsf  - \bar{Z} ) \big] 
 - j'(\orsf)( \orsf  - I_{\T_{\Omega}} \bar{\rsf}) =: \mathrm{I} + \mathrm{II} - \mathrm{III}.
\end{align}
Then, it suffices to estimate the terms $\mathrm{I}$, $\mathrm{II}$ and $\mathrm{III}$. To bound $\mathrm{I}$, we invoke the Lipschitz continuity property of $j'_{\T_{\Y}}$, which follows from the quadratic structure of $j_{\T_{\Y}}$. This yields, for $\eta \in L^2(\Omega)$, the error estimate
\begin{equation}
\begin{aligned}
\label{eq:step_I}
| \mathrm{I} | & = | j''_{\T_{\Y}} (\eta)( I_{\T_{\Omega}} \orsf - \orsf,I_{\T_{\Omega}} \orsf - \bar{Z} )| 
\\
& = \big| (\mathbf{H}_{\T_{\Y}} (I_{\T_{\Omega}} \orsf - \orsf), \mathbf{H}_{\T_{\Y}}(I_{\T_{\Omega}} \orsf - \bar{Z}) )_{L^2(\Omega)}  + \vartheta ( I_{\T_{\Omega}} \orsf - \orsf,  I_{\T_{\Omega}} \orsf - \bar{Z})_{L^2(\Omega)}\big|
\\
& \lesssim \| I_{\T_{\Omega}}  \orsf - \orsf \|_{L^2(\Omega)} \|  I_{\T_{\Omega}} \orsf - \bar{Z} \|_{L^2(\Omega)} \lesssim h_{\T_{\Omega}}^{\sigma + \frac{1}{2}}\|  I_{\T_{\Omega}} \orsf - \bar{Z} \|_{L^2(\Omega)},
\end{aligned}
\end{equation}
where we have used the $L^2(\Omega)$--continuity of the discrete control--to--state operator $\mathbf{H}_{\T_{\Y}}$ and the error estimate \eqref{eq:interpolation_control}.

We now control the term $\mathrm{II}$. Invoking the definitions \eqref{eq:red_J} and \eqref{eq:red_Jd} of $j$ and $j_{\T_{\Y}}$, respectively, we write
\[
 | \mathrm{II} | = | (\tr (P(\orsf) - \bar{p}), I_{\T_{\Omega}} \orsf - \bar{Z} ) _{L^2(\Omega)} | \leq \| \tr ( P(\orsf) -  \bar{p} )\|_{L^2(\Omega)} \| I_{\T_{\Omega}} \orsf - \bar{Z}\|_{L^2(\Omega)},
\]
and then, by applying the result of Lemma \ref{le:auxiliary_estimate_I}, we obtain that
\begin{equation}
\label{eq:step_II}
 | \mathrm{II} | \lesssim |\log ( \# \T_{\Y} ) |^{2s} h_{\T_{\Omega}}^{1+s} \| I_{\T_{\Omega}} \orsf - \bar{Z}\|_{L^2(\Omega)}.
\end{equation}

We finally bound the term $\mathrm{III}$. To accomplish this task, we write
\begin{align*}
\mathrm{III} = j'(\orsf)( \orsf  - I_{\T_{\Omega}} \orsf) & = (\tr \bar{p} + \vartheta \orsf,  \orsf  - I_{\T_{\Omega}} \orsf)_{L^2(\Omega)} 
\\
& = (\tr \bar{p} + \vartheta \orsf,  \orsf  - I_{\T_{\Omega}} \orsf)_{L^2(\Omega \setminus \Omega_{\T})}  +  \sum_{K \in \T_{\Omega}}\mathrm{III}(K).
\end{align*}
Since $I_{\T_{\Omega}} \orsf$ vanishes on $\Omega \setminus \Omega_{\T}$ and $|\Omega \setminus \Omega_{\T}| \lesssim h_{\T_{\Omega}}^2$, we conclude that
\begin{equation}
\label{eq:52}
| (\tr \bar{p} + \vartheta \orsf,  \orsf)_{L^2(\Omega \setminus \Omega_{\T})} | \lesssim h_{\T_{\Omega}}^{2} (\| \tr \bar{p} \|_{L^{\infty}(\Omega)} + \| \orsf \|_{L^{\infty}(\Omega)}) \| \orsf \|_{L^{\infty}(\Omega)};
\end{equation}
\EO{the uniform control of $\| \tr \bar{p} \|_{L^{\infty}(\Omega)}$ and $\| \orsf \|_{L^{\infty}(\Omega)}$ follows from Proposition \ref{pro:regularity_control_r_Holder}.}

We now estimate each term $\mathrm{III}(K)$ depending on the location of the cell $K \in \T_{\Omega}$ with respect to the partition defined by \eqref{eq:part_1}--\eqref{eq:part_3}.
\begin{enumerate}[$\bullet$]
 \item $K \in \T_{\Omega}^1:$ In this situation, the cell $K$ is \emph{active}. Consequently, $(\orsf - I_{\T_{\Omega}} \orsf) |_{K} = 0$ and then $\mathrm{III}(K) = 0$.
 \item $K \in \T_{\Omega}^2:$ Since the cell $K$ is \emph{inactive}, the optimality condition of the system \eqref{op_truncated} immediately yields $(\tr \bar{p} + \vartheta \orsf)|_{K}=0$. Consequently, $\mathrm{III}(K) = 0$.
 \item $K \in \T_{\Omega}^3$: \EO{A first estimate can be derived by invoking the $L^2$--standard projection operator $\mathfrak{P}_{\T_{\Omega}}: L^2(\Omega_{\T}) \rightarrow \mathbb{U}(\T_{\Omega})$ as follows:}
 \begin{align*}
  \mathrm{III}(K) & = (\tr \bar{p} + \vartheta \orsf - \mathfrak{P}_{\T_{\Omega}}( \tr \bar{p} + \vartheta \orsf ),  \orsf  - I_{\T_{\Omega}} \bar{\rsf})_{L^2(K)} 
  \\
  & +  (\mathfrak{P}_{\T_{\Omega}}( \tr \bar{p} + \vartheta \orsf ),  \mathfrak{P}_{\T_{\Omega}}\orsf  - I_{\T_{\Omega}} \bar{\rsf})_{L^2(K)}.
 \end{align*}
 We thus invoke standard interpolation and inverse estimates, to conclude, for $s \in (0,1)$, that
\[
 \mathrm{III}(K) \lesssim h^2_{\T_{\Omega}} \left( \| \tr \bar{p} + \vartheta \orsf \|_{H^1(K)} \| \orsf \|_{H^1(K)} \right).
\]
\EO{We notice that this estimate would only provide linear orden of convergence for the left hand side of \eqref{eq:second_estimate}. In what follows we present an improvement of this error estimate for $s \in (\frac{1}{4},1)$.} We begin with the estimate
 \[
  | \mathrm{III}(K) | \leq |K| \| \tr \bar{p} + \vartheta \orsf \|_{L^{\infty}(K)}\| \orsf - I_{\T_{\Omega}}\orsf \|_{L^{\infty}(K)}.
 \]
We bound  $\| \tr \bar{p} + \vartheta \orsf \|_{L^{\infty}(K)}$. \EO{To do this, we invoke the fact that $K \in \T_{\Omega}^3$, and then that there exists a point $x_0' \in K$ such that $(\tr \bar{p} + \vartheta \orsf)(x_0') = 0$.} This, in view of Proposition \ref{pro:regularity_control_r_Holder}, the assumption \eqref{eq:reg_c_r} \EO{and the definition of the H\"older--norm,} implies that
\begin{equation}
\begin{aligned}
\label{eq:referee}
 \| \tr \bar{p} + \vartheta \orsf \|_{L^{\infty}(K)} & = \| \tr \bar{p} + \vartheta \orsf - (\tr \bar{p} + \vartheta \orsf)(x_0')\|_{L^{\infty}(K)}
 \\
 & \lesssim h_{\T_{\Omega}}^{\sigma} \| \tr \bar{p} + \vartheta \orsf \|_{C^{0,\sigma}(\bar K)}.
 \end{aligned}
\end{equation}
On the other hand, applying Theorem \ref{th:reg_controlII} and Proposition \ref{pro:regularity_control_r_Holder}, again, we obtain
\[
 \| \orsf - I_{\T_{\Omega}} \orsf \|_{L^{\infty}(K)} \lesssim h_{\T_{\Omega}}^{\sigma} \| \orsf \|_{C^{0,\sigma}(\bar K)}
\]
\end{enumerate}

Consequently, in view of \eqref{eq:52} and the derived estimated for $\mathrm{III}(K)$, we derive
\begin{equation}
\label{eq:step_III}
 |\mathrm{III}| \lesssim h_{\T_{\Omega}}^{2} + \sum_{K \in \T_{\Omega}^3} |\mathrm{III}(K) | \lesssim h_{\T_{\Omega}}^{2} + h_{\T}^{2\sigma} \sum_{K \in \T^3} |K| \lesssim h_{\T_{\Omega}}^{2\sigma+1}.
\end{equation}

Finally, inserting the estimates \eqref{eq:step_I}, \eqref{eq:step_II} and \eqref{eq:step_III} in \eqref{eq:aux_1}, we derive the desired inequality \eqref{eq:second_estimate} upon realizing that $1 + s > \tfrac{1}{2} + \sigma$ for all $s \in (\frac{1}{4},1)$. This concludes the proof. 
\end{proof}

\begin{theorem}[fractional control problem: error  estimate]
\EO{Let $\ozsf$ and $\bar{Z}$ be the optimal controls for the fractional optimal control problem \eqref{eq:J}--\eqref{eq:cc} and the fully discrete optimal control problem, respectively. If $\Omega$ is a convex $C^2$ domain, $A \in C^{0,1}(\bar \Omega)$ and $\usf_d \in L^{\infty}(\Omega) \cap \Ws$, then we have that}
\[
 \| \ozsf - \bar{Z} \|_{L^2(\Omega)} \lesssim  |\log (\# \T_{\Y})|^{2s}(\# \T_{\Y})^{\frac{-1}{n+1} \left( \frac{1}{2}+\sigma \right) }, 
\]
where the hidden constant is independent of the continuous and discrete optimal variables and the mesh $\T_{\Y}$.
\label{th:error_estimates}
\end{theorem}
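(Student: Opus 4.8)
The plan is to assemble the result from the truncation estimate of Lemma~\ref{LE:exp_convergence} together with the discretization estimates proved above. First I would use the triangle inequality to write
\[
\| \ozsf - \bar{Z} \|_{L^2(\Omega)} \leq \| \ozsf - \orsf \|_{L^2(\Omega)} + \| \orsf - \bar{Z} \|_{L^2(\Omega)},
\]
where $\orsf$ is the optimal control of the truncated problem of Theorem~\ref{TH:truncated_control}. The first summand is handled directly by Lemma~\ref{LE:exp_convergence}: since $\orsf \in \Zad$ gives $\| \orsf\|_{L^2(\Omega)} \leq |\Omega|^{1/2}\max\{|\asf|,|\bsf|\}$ and $\usf_d \in L^2(\Omega)$, it follows that $\| \ozsf - \orsf \|_{L^2(\Omega)} \lesssim e^{-\sqrt{\lambda_1}\Y/4}$.

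For the second summand I would invoke the splitting \eqref{eq:split},
\[
\| \orsf - \bar{Z} \|_{L^2(\Omega)} \leq \| \orsf - I_{\T_{\Omega}}\orsf \|_{L^2(\Omega)} + \| I_{\T_{\Omega}}\orsf - \bar{Z} \|_{L^2(\Omega)},
\]
and bound the two terms by the interpolation estimate \eqref{eq:interpolation_control} and by Lemma~\ref{le:auxiliary_estimate_II}, respectively; both are $\lesssim |\log(\#\T_{\Y})|^{2s}\, h_{\T_{\Omega}}^{1/2+\sigma}$ with $\sigma$ as in \eqref{eq:sigma} (the logarithmic factor being trivial in the first one). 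The hypotheses of these two results --- convex $C^{2}$ domain, $A \in C^{0,1}(\bar\Omega)$, $\asf<0<\bsf$, and the requisite regularity of $\usf_d$ --- are exactly those assumed here, so Propositions~\ref{pro:regularity_control_r_Sobolev} and \ref{pro:regularity_control_r_Holder} (hence Lemma~\ref{le:auxiliary_estimate_I}) are available. Converting mesh size to degrees of freedom via $h_{\T_{\Omega}} \approx (\#\T_{\Omega})^{-1/n} \approx M^{-1}$ together with $\#\T_{\Y} \approx M\,\#\T_{\Omega} \approx M^{n+1}$, one rewrites $h_{\T_{\Omega}}^{1/2+\sigma} \approx (\#\T_{\Y})^{-\frac{1}{n+1}(1/2+\sigma)}$.

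Finally I would choose the constant in the relation $\Y \approx |\log(\#\T_{\Y})|$ large enough that $\sqrt{\lambda_1}\,\Y/4 \geq \frac{1}{n+1}\bigl(\tfrac{1}{2}+\sigma\bigr)\log(\#\T_{\Y})$, so that $e^{-\sqrt{\lambda_1}\Y/4} \lesssim (\#\T_{\Y})^{-\frac{1}{n+1}(1/2+\sigma)}$ is dominated by the discretization contribution; collecting the three bounds then yields the asserted estimate. The substantive work has already been carried out in the interpolation estimate \eqref{eq:interpolation_control} and in Lemmas~\ref{le:auxiliary_estimate_I}--\ref{le:auxiliary_estimate_II}, so the main point remaining here is essentially bookkeeping: checking that the value of $\sigma$ fed in from the Sobolev regularity of $\tr\bar p$ (through $\kappa=\min\{1+2s,2\}$) and from the H\"older regularity of $\orsf$ is the one that actually governs both \eqref{eq:interpolation_control} and \eqref{eq:second_estimate}, and that $1+s > \tfrac12+\sigma$ for all $s\in(\tfrac14,1)$ so that the $L^{2}(\Omega)$ state--approximation term does not degrade the rate. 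I expect no obstacle beyond this consistency verification and the absorption of the exponential truncation term just described.
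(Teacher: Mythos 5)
Your proposal is correct and follows essentially the same route as the paper's proof: the triangle inequality through the truncated optimal control $\orsf$, Lemma~\ref{LE:exp_convergence} for the truncation error, the splitting \eqref{eq:split} handled by \eqref{eq:interpolation_control} and Lemma~\ref{le:auxiliary_estimate_II}, and the choice $\Y \approx |\log(\#\T_{\Y})|$ to equilibrate (the paper phrases the last step as balancing the exponential and algebraic terms rather than absorbing one into the other, but this is the same calculation). Your extra bookkeeping on hypotheses and the $h_{\T_{\Omega}}$--to--$\#\T_{\Y}$ conversion is consistent with what the paper leaves implicit.
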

\begin{proof}
We combine the exponential convergence result of Lemma \ref{LE:exp_convergence} with the estimates \eqref{eq:interpolation_control} and \eqref{eq:second_estimate} to arrive at
\begin{align*}
\| \ozsf - \bar{Z} \|_{L^2(\Omega)} & \leq \| \ozsf - \orsf \|_{L^2(\Omega)} + \| \orsf - \bar{Z} \|_{L^2(\Omega)}
\\
 & \lesssim e^{-\sqrt{\lambda_1} \Y /4} + |\log ( \# \T_{\Y} ) |^{2s} h_{\T}^{\frac{1}{2}+\sigma}.
\end{align*}
A natural choice of $\Y$ comes from equilibrating the two terms on the right hand side of the previous expression: $\Y \approx | \log ( \# \T_\Y ) | $. This gives the desired estimate and concludes the proof.
\end{proof}

\begin{remark}[error estimate for $s \in (0,1)$]
\label{rk:error_estimates}
\EO{For $s \in (\frac{1}{2},1)$, the estimate of Theorem \ref{th:error_estimates} reads
\[
 \| \ozsf - \bar{Z} \|_{L^2(\Omega)} \lesssim |\log ( \# \T_{\Y} ) |^{2s} (\# \T_{\Y})^{-\frac{3}{2(n+1)}}.
\]
Since the family $\{ \T_{\Omega} \}$ is quasi--uniform, $\# \T_{\Omega} \approx M^{n}$ and $\# \T_{\Y} \approx \# \T_{\Omega} \cdot M$, we have that $h_{\T_{\Omega}} \approx (\# \T_{\Omega})^{-1/n} \approx (\# \T_{\Y})^{-1/(n+1)}$. This implies that the previous estimate can be rewritten as 
\[
 \| \ozsf - \bar{Z} \|_{L^2(\Omega)} \lesssim |\log ( \# \T_{\Y} ) |^{2s} h_{\T_{\Omega}}^{\frac{3}{2}},
\]
which, up to the logarithmic term, corresponds to the well--known error estimate for first--degree approximation of the optimal control \cite{MR2302057,MV:08}. In the particular case that $s = 1/2$, Theorem \ref{th:error_estimates} provides a slightly deteriorated error estimate:
\[
 \| \ozsf - \bar{Z} \|_{L^2(\Omega)} \lesssim |\log ( \# \T_{\Y} ) |^{2s} (\# \T_{\Y})^{\frac{-1}{(n+1)} \left( \tfrac{1}{2} + \theta \right)},
\]
for any $\theta < 1$. For $s \in (\frac{1}{4},\frac{1}{2})$, the result of Theorem \ref{th:error_estimates} reads
\begin{equation*}
  \| \ozsf - \bar{Z} \|_{L^2(\Omega)} \lesssim |\log ( \# \T_{\Y} ) |^{2s} (\# \T_{\Y})^{\frac{-1}{(n+1)} \left( \tfrac{1}{2} + 2s \right)},
\end{equation*}
while for $s \in (0,\frac{1}{4})$, Theorem \ref{th:error_estimates} yields a linear order of convergence.}
\end{remark}

\begin{remark}[error estimates: $n \geq 1$]
The advantage of deriving regularity properties of the optimal controls $\ozsf$ and $\orsf$ in H\"older spaces has as a consequence that the provided a priori error analysis holds in a general $n$--dimensional setting, \EO{under the assumption that $\Omega$ is a convex $C^2$ domain and $A \in C^{0,1}(\bar \Omega)$.}
\end{remark}

\section{Numerical experiments}
\label{sec:numerics}

In this section, we conduct a series of numerical experiments that illustrate the performance of the fully discrete scheme proposed in Section \ref{sec:fd} and support our theoretical findings. 

The implementation has been carried out within the MATLAB software library {\it{i}}FEM~\cite{chen2009ifem}. \EO{The stiffness matrices of the discrete systems \eqref{fd_a} and \eqref{fd_adjoint} are assembled exactly, and the respective forcing boundary terms are computed by a quadrature formula which is exact for polynomials of degree 4. The resulting linear system is solved by using the built-in direct solver of MATLAB. To solve the minimization problem, we use the projected Broyden--Fletcher--Goldfrab--Shanno (BFGS) method. The optimization algorithm is terminated when the $\ell^2$--norm of the projected gradient is less than or equal to $10^{-8}$.}


To illustrate the error estimates of Theorem \ref{th:error_estimates} we consider the following exact solution to the fractional optimal control problem \eqref{eq:J}--\eqref{eq:cc}. Let $n=2$, $\vartheta =1$, $\Omega = (0,1)^2$, and $A(x') \equiv 1$ in \eqref{eq:L}. The eigenvalues and eigenfunctions of the operator $\mathcal{L}$ are given by:
\[
\lambda_{k,l} = \pi^2 (k^2 + l^2), \quad \varphi_{k,l}(x_1,x_2) = \sin(k \pi x_1) \sin(l\pi x_2),  
\quad k, l \in \mathbb{N}.
\]
\EO{To construct an exact solution, we consider the following modification or problem \eqref{eq:fractional}. Given $s\in (0,1)$, the forcing term $\fsf$ and the control $\zsf$, the aforementioned modified problem reads: Find $\usf$ such that 
$
\mathcal{L}^s \usf = \fsf + \zsf
$
in $\Omega$ and $\usf = 0$  on  $\partial \Omega$. If $\asf = -0.5$ and $\bsf = 0.5$, 
and 
$
\usf_d = (1 + \vartheta \lambda_{2,2}^s) \sin(2 \pi x_1) \sin(2\pi x_2),
$
then we have that $\ousf = \sin(2 \pi x_1) \sin(2\pi x_2)$, $\opsf = -\vartheta \sin(2 \pi x_1) \sin(2 \pi x_2)$, and 
\[
 \ozsf =\min\left\{ 0.5, \max \left\{ - 0.5 , - \opsf/\vartheta \right\} \right\},
\]
where $\fsf = \lambda_{2,2}^s \sin(2 \pi x_1) \sin(2\pi x_2) - \ozsf$. We notice that $\ozsf \in H_0^1(\Omega) \cap C^{0,1}(\Omega)$ for all the values of $s \in (0,1)$. We remark that under this regularity property of the optimal control, the arguments developed in the proof of the estimates \eqref{eq:interpolation_control} and \eqref{eq:second_estimate} guarantee the error estimate} 
\[
 \| \ozsf - \bar{Z} \|_{L^2(\Omega)} \lesssim |\log ( \# \T_{\Y} ) |^{2s} (\# \T_{\Y})^{-\frac{3}{2(n+1)}}.
\]

\subsection{Piecewise linear versus piecewise constant approximation}

\EO{In this section we explore the advantages of the fully discrete scheme of Section \ref{sec:fd} when solving the fractional optimal control problem; we compare the performance of this scheme ($\mathbb{P}_1$--scheme) with that of the numerical technique investigated in \cite{AO} that is based on piecewise constant approximation of the optimal control ($\mathbb{P}_0$--scheme). Table \ref{table:p1vsp0} show, for different meshes $\T_{\Y}$, the error $E_{\zsf}$ in the control approximation due to the $\mathbb{P}_1$--scheme and the $\mathbb{P}_0$--scheme. Two values of the parameter $s$ are considered: $s = 0.2$ and $s = 0.8$. $\#$DOFs denotes the number of degrees of freedom of $\T_{\Y}$. It can be observed that, for a mesh $\T_{\Y}$ with $\#$ DOFs = $137376$, the error obtained with the $\mathbb{P}_1$--scheme is almost an order in magnitude smaller than the corresponding error due to the $\mathbb{P}_0$--scheme of \cite{AO}.}

\begin{table}
  \begin{center}
    \begin{tabular}{||c||c||c||c||c||c||}
      \hline
      \#  DOFs     & $ E_{\zsf}(\mathbb{P}_0;0.2)$ & $ E_{\zsf}(\mathbb{P}_1;0.2)$ & $ E_{\zsf}(\mathbb{P}_0;0.8)$ & $ E_{\zsf}(\mathbb{P}_1;0.8)$ \\
      \hline
       432     & 0.147712126        & 0.131130828       & 0.1482301425       & 0.1470944750 \\
      \hline
      3146   & 0.083305924       & 0.036668665       &    0.0840901319       & 0.0443202090 \\
      \hline
      10496   &  0.058953277        & 0.020712242      &    0.0588454408      & 0.0241526956 \\
      \hline
      25137  & 0.044253527       & 0.012937511     &    0.0441539905       & 0.0148381456 \\
      \hline
      49348  & 0.035650434       & 0.008967500      &    0.0356800357       & 0.0101409325 \\
      \hline
       85529 & 0.029769320        & 0.007334747      &    0.0297507072       & 0.0080907623 \\
      \hline
       137376 & 0.025419044       & 0.005094037     &   0.0254259814       & 0.0056585074 \\
      \hline
    \end{tabular} 
  \end{center}
\vspace{0.2cm}
\caption{
Experimental errors for both: the fully discrete scheme studied in Section \ref{sec:fd}($\mathbb{P}_1$--scheme) and the $\mathbb{P}_0$--scheme proposed in \cite{AO}. Two different values of the parameter are $s$ are considered: $0.2$ and $0.8$.}
\label{table:p1vsp0}
\end{table}
 
\subsection{Computational convergence rates for $s \in (0,1)$}

In Figure~\ref{fig:grad_rate}, we show the the asymptotic relation $$\|\ozsf - \bar{Z} \|_{L^2(\Omega)} \approx (\# \T_{\Y})^{-\frac{1}{2}}$$ which illustrate the decay rate of our fully-discrete scheme of Section \ref{sec:fd} for $n=2$ and all the choices of the parameter $s$ considered: $s=0.2$, $s=0.4$, $s=0.6$, and $s=0.8$. For $s = 0.6$ and $s=0.8$, the  presented results are in agreement with the error estimate of Theorem \ref{th:error_estimates}. For $s = 0.2$ and $s=0.4$ the results of Figure~\ref{fig:grad_rate} present an experimental convergence rate that is better than the one derived in Theorem \ref{th:error_estimates}. This is due to the fact, in this case, the exact optimal control $\zsf \in H_0^1(\Omega) \cap C^{0,1}(\Omega)$; regularity property that is not provided by Theorem \ref{th:reg_controlII} for $s \in (0,\frac{1}{2})$. Similar results are shown in Figure \ref{fig:grad_rate_2} for the following values of the parameter $s$: $s=0.1$, $s=0.3$, $s=0.5$, $s=0.7$ and $s=0.9$.

\begin{figure}[h!]
\centering
\includegraphics[width=0.55\textwidth]{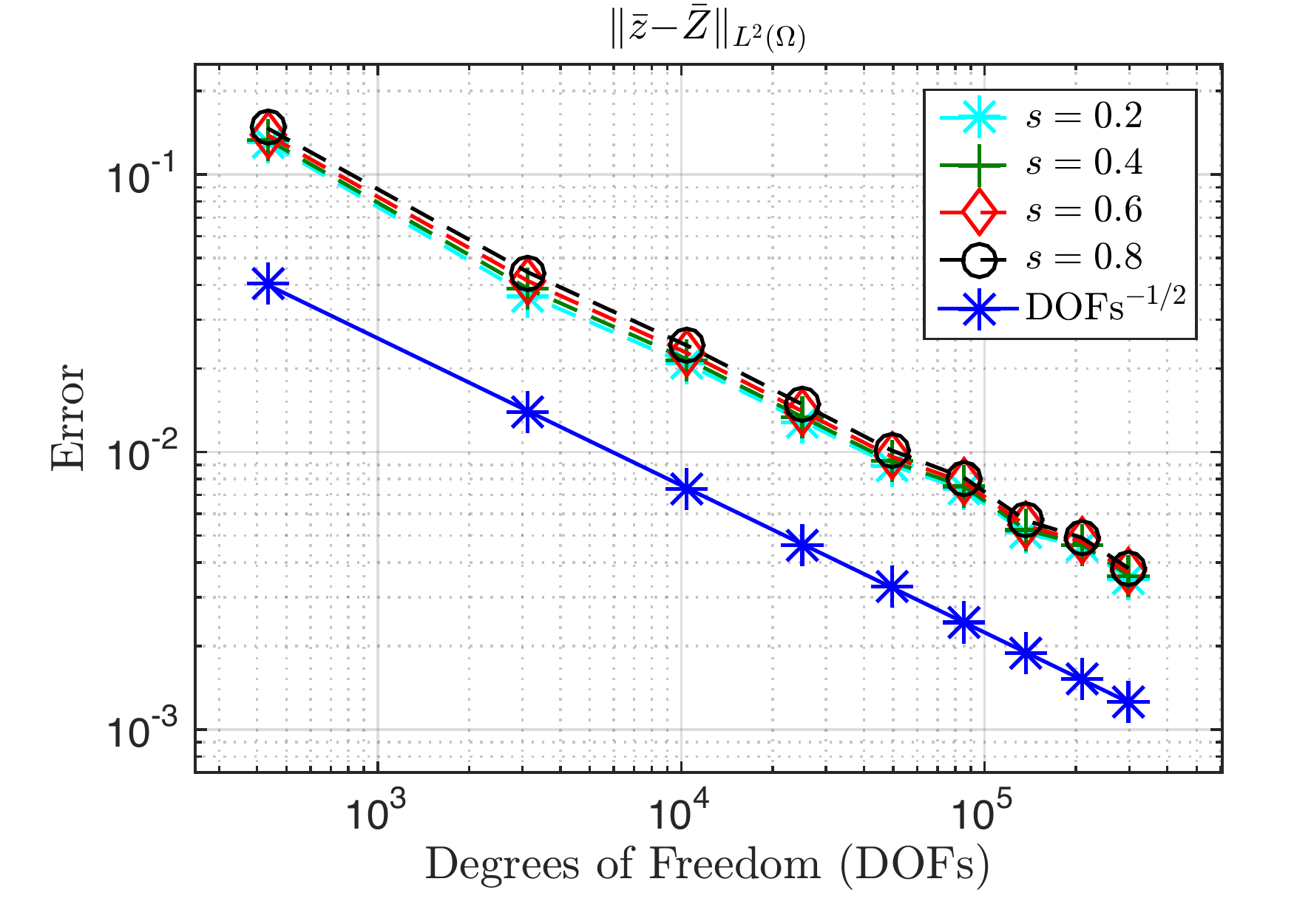} 
\caption{\label{fig:grad_rate}
Computational rates of convergence for the fully discrete scheme proposed in Section \ref{sec:fd} on anisotropic meshes for $n = 2$ and $s=0.2$, $s=0.4$, $s=0.6$ and $s=0.8$. The figure shows the decrease of the $L^2$-norm of the error for the optimal control with respect to $\# \T_{\Y}$. In all the cases we recover the rate $(\# \T_{\Y})^{-1/2}$. }
\end{figure}

\begin{figure}[h!]
\centering
\includegraphics[width=0.55\textwidth]{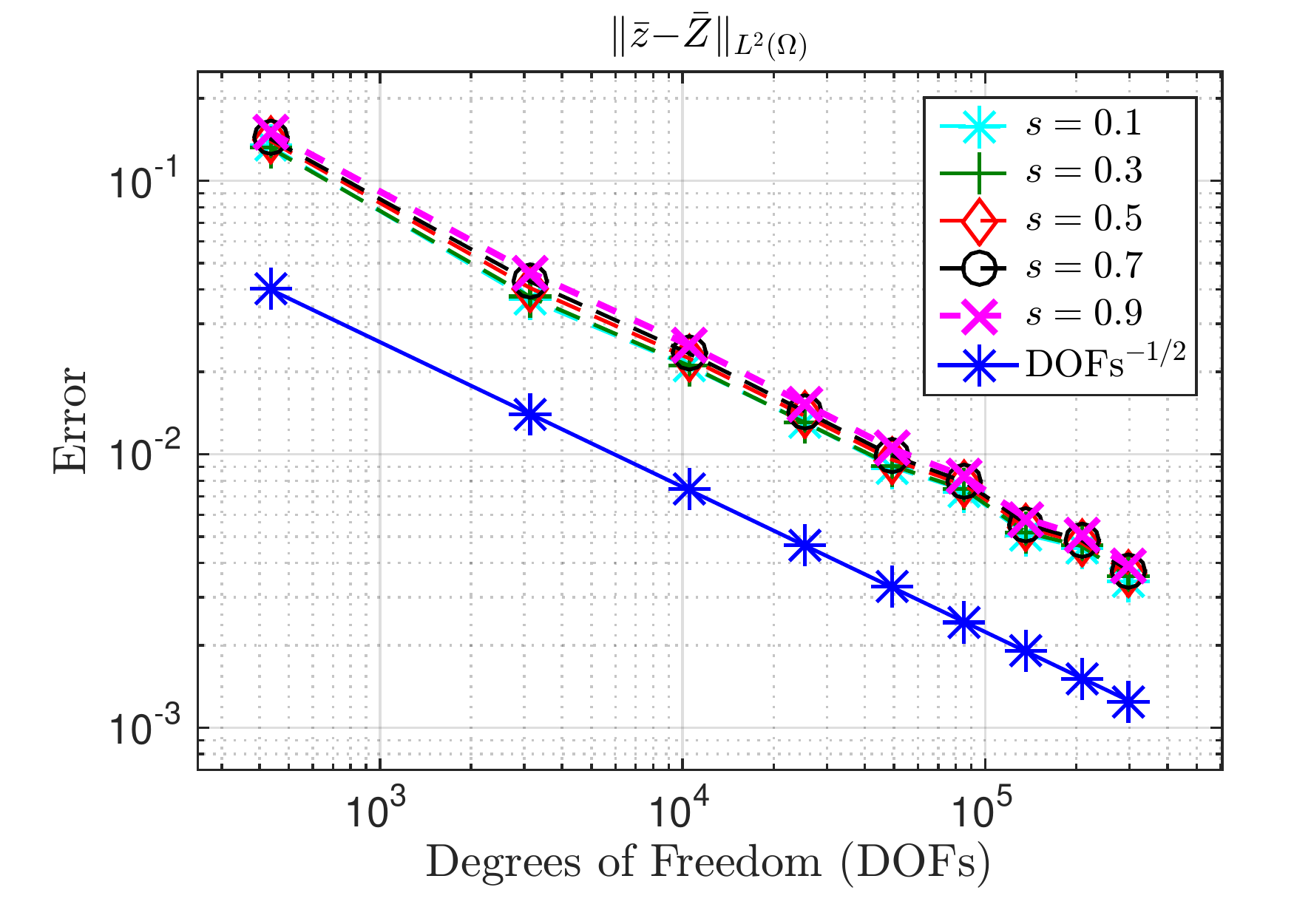} 
\caption{\label{fig:grad_rate_2}
Computational rates of convergence for the fully discrete scheme proposed in Section \ref{sec:fd} on anisotropic meshes for $n = 2$ and $s=0.1$, $s=0.3$, $s=0.5$, $s=0.7$ and $s=0.9$. The figure shows the decrease of the $L^2$-norm of the error for the optimal control with respect to $\# \T_{\Y}$. In all the cases we recover the rate $(\# \T_{\Y})^{-1/2}$.}
\end{figure}

\section*{Acknowledgement}
The author would like to thank A.J. Salgado and P. R. Stinga for fruitful discussions regarding the H\"older regularity of linear problems involving fractional powers of elliptic operators in bounded domains and H. Antil for the help provided with {\it{i}}FEM~\cite{chen2009ifem}. The author would also like to thank B. Vexler for providing some lecture notes that inspired this article.

\bibliographystyle{plain}
\bibliography{biblio}

\def\cprime{$'$} \def\cprime{$'$} \def\cprime{$'$} \def\cprime{$'$}
  \def\cprime{$'$}
\begin{thebibliography}{10}

\bibitem{Abe2005403}
S.~Abe and S.~Thurner.
\newblock Anomalous diffusion in view of {E}instein's 1905 theory of {B}rownian
  motion.
\newblock {\em Phys. A}, 356(2–4):403 -- 407, 2005.

\bibitem{Abra}
M.~Abramowitz and I.A. Stegun.
\newblock {\em Handbook of mathematical functions with formulas, graphs, and
  mathematical tables}, volume~55 of {\em National Bureau of Standards Applied
  Mathematics Series}.
\newblock 1964.

\bibitem{AB:15}
G~Acosta and J.P. Borthagaray.
\newblock A fractional laplace equation: regularity of solutions and finite
  element approximations.
\newblock arXiv:1507.08970, 2015.

\bibitem{MR3348118}
M.~Allen, E.~Lindgren, and A.~Petrosyan.
\newblock The two-phase fractional obstacle problem.
\newblock {\em SIAM J. Math. Anal.}, 47(3):1879--1905, 2015.

\bibitem{AO}
H.~Antil and E.~Ot{\'a}rola.
\newblock A {FEM} for an optimal control problem of fractional powers of
  elliptic operators.
\newblock {\em SIAM J. Control Optim.}, 53(6):3432--3456, 2015.

\bibitem{AOS}
H.~Antil, E.~Ot{\'a}rola, and A.J. Salgado.
\newblock A {S}pace-{T}ime {F}ractional {O}ptimal {C}ontrol {P}roblem:
  {A}nalysis and {D}iscretization.
\newblock {\em SIAM J. Control Optim.}, 54(3):1295--1328, 2016.

\bibitem{atanackovic2014fractional}
T.~M. Atanackovi{\'c}, S.~Pilipovi{\'c}, B.~Stankovi{\'c}, and D.~Zorica.
\newblock {\em Fractional calculus with applications in mechanics}.
\newblock Mechanical Engineering and Solid Mechanics Series. ISTE, London; John
  Wiley \& Sons, Inc., Hoboken, NJ, 2014.

\bibitem{Barrios20126133}
B.~Barrios, E.~Colorado, A.~de~Pablo, and U.~S{\'a}nchez.
\newblock On some critical problems for the fractional {L}aplacian operator.
\newblock {\em J. Differential Equations}, 252(11):6133--6162, 2012.

\bibitem{MR2302057}
R.~Becker and B.~Vexler.
\newblock Optimal control of the convection-diffusion equation using stabilized
  finite element methods.
\newblock {\em Numer. Math.}, 106(3):349--367, 2007.

\bibitem{BCdPS:12}
C.~Br{\"a}ndle, E.~Colorado, A.~de~Pablo, and U.~S{\'a}nchez.
\newblock A concave-convex elliptic problem involving the fractional
  {L}aplacian.
\newblock {\em Proc. Roy. Soc. Edinburgh Sect. A}, 143(1):39--71, 2013.

\bibitem{CS:07}
L.~Caffarelli and L.~Silvestre.
\newblock An extension problem related to the fractional {L}aplacian.
\newblock {\em Comm. Part. Diff. Eqs.}, 32(7-9):1245--1260, 2007.

\bibitem{MR2680400}
L.~Caffarelli and A.~Vasseur.
\newblock Drift diffusion equations with fractional diffusion and the
  quasi-geostrophic equation.
\newblock {\em Ann. of Math. (2)}, 171(3):1903--1930, 2010.

\bibitem{Pablo}
L.~A. Caffarelli and P.~R. Stinga.
\newblock Fractional elliptic equations, {C}accioppoli estimates and
  regularity.
\newblock {\em Ann. Inst. H. Poincar\'e Anal. Non Lin\'eaire}, 33(3):767--807,
  2016.

\bibitem{CDDS:11}
A.~Capella, J.~D{\'a}vila, L.~Dupaigne, and Y.~Sire.
\newblock Regularity of radial extremal solutions for some non-local semilinear
  equations.
\newblock {\em Comm. Part. Diff. Eqs.}, 36(8):1353--1384, 2011.

\bibitem{chen2009ifem}
L~Chen.
\newblock ifem: an integrated finite element methods package in matlab.
\newblock Technical report, Technical Report, University of California at
  Irvine, 2009.

\bibitem{wow}
W.~Chen.
\newblock A speculative study of $2/3$-order fractional laplacian modeling of
  turbulence: Some thoughts and conjectures.
\newblock {\em Chaos}, 16(2):1--11, 2006.

\bibitem{MR0520174}
P.~G. Ciarlet.
\newblock {\em The finite element method for elliptic problems}.
\newblock North-Holland Publishing Co., Amsterdam-New York-Oxford, 1978.
\newblock Studies in Mathematics and its Applications, Vol. 4.

\bibitem{MR1115237}
P.~G. Ciarlet.
\newblock Basic error estimates for elliptic problems.
\newblock In {\em Handbook of numerical analysis, {V}ol.\ {II}}, Handb. Numer.
  Anal., II, pages 17--351. North-Holland, Amsterdam, 1991.

\bibitem{Marta}
M.~D'Elia and M.~Gunzburger.
\newblock Optimal distributed control of nonlocal steady diffusion problems.
\newblock {\em SIAM J. Control Optim.}, 52(1):243--273, 2014.

\bibitem{MR3472639}
M.~D'Elia and M.~Gunzburger.
\newblock Identification of the diffusion parameter in nonlocal steady
  diffusion problems.
\newblock {\em Appl. Math. Optim.}, 73(2):227--249, 2016.

\bibitem{Javier}
J.~Duoandikoetxea.
\newblock {\em Fourier analysis}.
\newblock American Mathematical Society, 2001.

\bibitem{DL:05}
R.G. Dur{\'a}n and A.L. Lombardi.
\newblock Error estimates on anisotropic {$Q_1$} elements for functions in
  weighted {S}obolev spaces.
\newblock {\em Math. Comp.}, 74(252):1679--1706 (electronic), 2005.

\bibitem{Guermond-Ern}
A.~Ern and J.-L. Guermond.
\newblock {\em Theory and practice of finite elements}, volume 159 of {\em
  Applied Mathematical Sciences}.
\newblock Springer-Verlag, New York, 2004.

\bibitem{FKS:82}
E.~B. Fabes, C.E. Kenig, and R.P. Serapioni.
\newblock The local regularity of solutions of degenerate elliptic equations.
\newblock {\em Comm. Part. Diff. Eqs.}, 7(1):77--116, 1982.

\bibitem{ShinChan}
D~Fujiwara.
\newblock Concrete characterization of the domains of fractional powers of some
  elliptic differential operators of the second order.
\newblock {\em Proc. Japan Acad.}, 43:82--86, 1967.

\bibitem{Gatto}
P.~Gatto and J.~S. Hesthaven.
\newblock Numerical approximation of the fractional {L}aplacian via
  {$hp$}-finite elements, with an application to image denoising.
\newblock {\em J. Sci. Comput.}, 65(1):249--270, 2015.

\bibitem{GU}
V.~Gol{\cprime}dshtein and A.~Ukhlov.
\newblock Weighted {S}obolev spaces and embedding theorems.
\newblock {\em Trans. Amer. Math. Soc.}, 361(7):3829--3850, 2009.

\bibitem{Grisvard}
P.~Grisvard.
\newblock {\em Elliptic problems in nonsmooth domains}, volume~24 of {\em
  Monographs and Studies in Mathematics}.
\newblock Pitman (Advanced Publishing Program), Boston, MA, 1985.

\bibitem{MR1972729}
E.~Hern{\'a}ndez and R.~Rodr{\'{\i}}guez.
\newblock Finite element approximation of spectral problems with {N}eumann
  boundary conditions on curved domains.
\newblock {\em Math. Comp.}, 72(243):1099--1115 (electronic), 2003.

\bibitem{HO:14}
Y.~Huang and A.~Oberman.
\newblock Numerical methods for the fractional laplacian: A finite
  difference-quadrature approach.
\newblock {\em SIAM J. Numer. Anal.}, 52(6):3056--3084, 2014.

\bibitem{ILTA:05}
M.~Ilic, F.~Liu, I.~Turner, and V.~Anh.
\newblock Numerical approximation of a fractional-in-space diffusion equation.
  {I}.
\newblock {\em Fract. Calc. Appl. Anal.}, 8(3):323--341, 2005.

\bibitem{MR2300467}
M.~Ilic, F.~Liu, I.~Turner, and V.~Anh.
\newblock Numerical approximation of a fractional-in-space diffusion equation.
  {II}. {W}ith nonhomogeneous boundary conditions.
\newblock {\em Fract. Calc. Appl. Anal.}, 9(4):333--349, 2006.

\bibitem{ICH}
R.~Ishizuka, S.-H. Chong, and F.~Hirata.
\newblock An integral equation theory for inhomogeneous molecular fluids: The
  reference interaction site model approach.
\newblock {\em J. Chem. Phys}, 128(3), 2008.

\bibitem{KO84}
A.~Kufner and B.~Opic.
\newblock How to define reasonably weighted {S}obolev spaces.
\newblock {\em Comment. Math. Univ. Carolin.}, 25(3):537--554, 1984.

\bibitem{MR2064019}
S.~Z. Levendorski{\u\i}.
\newblock Pricing of the {A}merican put under {L}\'evy processes.
\newblock {\em Int. J. Theor. Appl. Finance}, 7(3):303--335, 2004.

\bibitem{Lions}
J.-L. Lions and E.~Magenes.
\newblock {\em Non-homogeneous boundary value problems and applications. {V}ol.
  {I}}.
\newblock Springer-Verlag, New York, 1972.

\bibitem{MV:08}
D.~Meidner and B.~Vexler.
\newblock A priori error estimates for space-time finite element discretization
  of parabolic optimal control problems part ii: Problems with control
  constraints.
\newblock {\em SIAM J. Control Optim.}, 47(3):1301--1329, 2008.

\bibitem{MR:04}
C.~Meyer and A.~R{\"o}sch.
\newblock Superconvergence properties of optimal control problems.
\newblock {\em SIAM J. Control Optim.}, 43(3):970--985 (electronic), 2004.

\bibitem{Muckenhoupt}
B.~Muckenhoupt.
\newblock Weighted norm inequalities for the {H}ardy maximal function.
\newblock {\em Trans. Amer. Math. Soc.}, 165:207--226, 1972.

\bibitem{NOS}
R.H. Nochetto, E.~Ot{\'a}rola, and A.J. Salgado.
\newblock A {PDE} approach to fractional diffusion in general domains: a priori
  error analysis.
\newblock {\em Found. Comput. Math.}, 15(3):733--791, 2015.

\bibitem{NOS3}
R.H. Nochetto, E.~Ot{\'a}rola, and A.J. Salgado.
\newblock A {PDE} {A}pproach to {S}pace-{T}ime {F}ractional {P}arabolic
  {P}roblems.
\newblock {\em SIAM J. Numer. Anal.}, 54(2):848--873, 2016.

\bibitem{NOS2}
R.H. Nochetto, Enrique Ot{\'a}rola, and A.J. Salgado.
\newblock Piecewise polynomial interpolation in {M}uckenhoupt weighted
  {S}obolev spaces and applications.
\newblock {\em Numer. Math.}, 132(1):85--130, 2016.

\bibitem{Tobias}
R.H. Nochetto, T.~von Petersdorff, and C.-S. Zhang.
\newblock A posteriori error analysis for a class of integral equations and
  variational inequalities.
\newblock {\em Numer. Math.}, 116(3):519--552, 2010.

\bibitem{MR773854}
P.A. Raviart and J.M. Thomas.
\newblock {\em Introduction \`a l'analyse num\'erique des \'equations aux
  d\'eriv\'ees partielles}.
\newblock Collection Math\'ematiques Appliqu\'ees pour la Ma\^\i trise. Masson,
  Paris, 1983.

\bibitem{MR2270163}
L.~Silvestre.
\newblock Regularity of the obstacle problem for a fractional power of the
  {L}aplace operator.
\newblock {\em Comm. Pure Appl. Math.}, 60(1):67--112, 2007.

\bibitem{ST:10}
P.R. Stinga and J.L. Torrea.
\newblock Extension problem and {H}arnack's inequality for some fractional
  operators.
\newblock {\em Comm. Part. Diff. Eqs.}, 35(11):2092--2122, 2010.

\bibitem{Tbook}
F.~Tr{\"o}ltzsch.
\newblock {\em Optimal Control of Partial Differential Equations}.
\newblock AMS, 2010.

\bibitem{Turesson}
B.O. Turesson.
\newblock {\em Nonlinear potential theory and weighted {S}obolev spaces}.
\newblock Springer, 2000.

\bibitem{YTLI:11}
Q.~Yang, I.~Turner, F.~Liu, and M.~Ili{\'c}.
\newblock Novel numerical methods for solving the time-space fractional
  diffusion equation in two dimensions.
\newblock {\em SIAM J. Sci. Comput.}, 33(3):1159--1180, 2011.

\end{thebibliography}

\end{document}